\numberwithin{equation}{section}
\newcounter{assumption}
\newtheorem{assumption}[assumption]{Assumption}
\newcommand{\nn}{\nonumber} 
\def\refe#1{(\ref{#1})}
\newcommand\bs[1]{\boldsymbol{#1}}
\begin{document}

\title[Second-order Decoupled FEM for Micropolar RBC System
]{An efficient fully decoupled finite element method with second-order accuracy for the micropolar Rayleigh-B{\'e}nard convection system}

\author[M. Cui, A. Hou, X. Dong]{Ming Cui, Akang Hou, and Xiaoyu Dong\corrauth}
\address{Department of Mathematics, 
School of Mathematics, Statistics and Mechanics, \\
Beijing University of Technology,
          Beijing 100124, P.R. China}
\emails{{\tt mingcui@bjut.edu.cn} (M.~Cui), {\tt houakang@emails.bjut.edu.cn} (A.~Hou), {\tt dongxy@bjut.edu.cn} (X.~Dong)}

\begin{abstract}
The micropolar Rayleigh-B{\'e}nard convection system, which consists of Navier-Stokes equations, the angular momentum equation, and the heat equation, is a strongly nonlinear, coupled, and saddle point structural multiphysics system. A second-order pressure projection finite element method, which is linear, fully decoupled, and second-order accurate in time, is proposed to simulate the system. Only a few decoupled linear elliptic problems with constant coefficients are solved at each time step, simplifying calculations significantly. The stability analysis of the method is established and the optimal error estimates are derived rigorously with the negative norm technique. Extensive numerical simulations, including 2D and 3D accuracy tests, the lid-driven cavity flow, and the passive-scalar mixing experiment, are carried out to illustrate the effectiveness of the method.
\end{abstract}

\ams{65M12, 65M60, 76M10} 
\keywords{Micropolar Rayleigh-B{\'e}nard convection system, projection method, fully decoupling, unconditional stability, optimal error estimates.}
\maketitle

\section{Introduction}

The micropolar fluid model, originally introduced by Eringen in 1966 \cite{Eringen1966}, belongs to a class of non-Newtonian fluid theories accounting for microstructural effects and local particle rotations, which is a significant generalization of the incompressible Navier-Stokes model of classical hydrodynamics. A key feature of this model is the interaction between the rotational motion of microscopic constituents and the macroscopic velocity field. In narrow channels, the differences between the micropolar fluid model and the Navier-Stokes model become more pronounced \cite{Bahouri2011, Beale1984}. Micropolar theory effectively describes the dynamics of complex fluids with microstructural characteristics, substantially extending the range of physical phenomena captured by conventional continuum theories, such as the flow behavior of biological fluids, liquid crystals and dilute aqueous polymer solutions \cite{Popel1974}. In recent years, many scholars have studied the dynamics of fluid layers between two rigid boundaries heated from below, which is a basic problem in thermal convection. To capture thermally driven flow accurately,  in this work, we focus on a mathematical model based on the Boussinesq approximation which describes the heat convection in a micropolar fluid, which is called a micropolar Rayleigh-B{\'e}nard convection system \cite{Foias1987, Kalita2019, Tarasinska2006, Xu2020}. 

The micropolar Rayleigh-B{\'e}nard convection system provides a more accurate description of microscale flow and angular momentum transport driven by temperature gradients, enhancing the thermal control efficiency and system design. Therefore, it has significant applications in engineering fields such as microelectronic cooling, biomedical hyperthermia, ferrofluid-based thermal management, and polymer processing  \cite{nanofluid2020review, Turkyilmazoglu2014, Tarasinska2006, Rana2012}. Many problems in engineering, especially those involving coupled heat transfer, often exhibit strongly nonlinear characteristics. Although certain cases can be solved analytically, most of these issues require numerical methods such as the perturbation method, the homotopy perturbation method, the variational iteration approach \cite{He1999}, and the finite difference method \cite{AydnaPopb2007}. At present, few numerical techniques are available for the micropolar Rayleigh-B{\'e}nard convection system. The finite element method is employed to discrete the highly linear  micropolar Rayleigh-B{\'e}nard convection system in this work. To overcome the difficulty due to the  saddle point structure formed by velocity and pressure, we adpot a  pressure projection technique, which was first introduced by Chorin \cite{Chorin1968} and was confirmed to be an effective approach to overcome such challenges in the late 1960s \cite{Temam1969}. This method is based on a rather peculiar time-discretization of the equations governing viscous incompressible flows, in which the viscosity and the incompressibility of the fluid are treated within two separate steps.  As shown in \cite{Rannacher1992}, the projection algorithm can also be interpreted as a pressure stabilization technique. In practice, this technique is compatible with various spatial discretizations, including spectral approximations \cite{Ku1987}  and finite differences \cite{Bell1989}. Currently, the hybrid scheme of this approach with the finite element method has been widely used by researchers to address various types of problems. Guermond \cite{Guermond1996} investigated a div-grad problem, employing velocity test functions that satisfy various boundary conditions, and supplemented the formulation for a Poisson equation subject to a Neumann boundary condition. Furthermore, a fractional-step projection method for incompressible viscous flows using finite element approximations was analyzed in \cite{GuermondQuartapelle1998} which established finite-time error estimates and illustrated the flexibility of the method through examples involving nonstandard boundary conditions. Then a second-order pressure projection finite element scheme was proposed for the Navier-Stokes equations in \cite{Guermond1999}, and the optimal error estimate was provided. A series of studies in \cite{ZhangHeYang2021, ZhangHeYang2023, ZhangHeYang2024} applied the projection method with the finite element method to two-phase FHD Shliomis models.  For more information about the application of the projection method, please refer to \cite{Pyo2009, Pyo2013, SiWangWang2022, GuermondShen2003, SiLuWang2022, Xu2020}. For the existence of solutions, the global regularity for the 2D micropolar Rayleigh-B{\'e}nard convection system with zero diffusivity was given in \cite{XuQiaoZhang2021} and the well-posedness of the 2D micropolar Rayleigh-B{\'e}nard convection problem was established in \cite{YuanLi2024}. In \cite{Galdi2011}, the global regularity for the 2D micropolar Rayleigh-B{\'e}nard convection system is provided with zero velocity dissipation and critical temperature diffusion. These theoretical researches become excellent foundations for numerical schemes to solve the micropolar Rayleigh-B{\'e}nard convection system.

The rest of this paper is organized as follows. We first introduce the micropolar Rayleigh-B{\'e}nard convection system and preliminaries in Section 2, and then develop a second-order pressure projection method to solve the micropolar Rayleigh-B{\'e}nard convection system and prove its unconditional stability in Section 3. In Section 4, the optimal error estimates of the numerical scheme are presented. Numerical tests are carried out in Section 5 to verify the effectiveness of the method.

\section{The micropolar Rayleigh-B{\'e}nard convection system}

The micropolar Rayleigh-B{\'e}nard convection system is formulated as
\begin{eqnarray}\label{1.1}
 	\left\{\begin{array}{lll}
 	\bs{u}_t+(\bs{u}\cdot\nabla)\bs{u}-(\chi+\mu)\Delta \bs{u}+\nabla p=2\chi\nabla \times \bs{\omega}+\theta \bs{e},\\
 	\nabla \cdot \bs{u}=0,\\
 \zeta\bs{\omega}_t+\zeta(\bs{u}\cdot\nabla)\bs{\omega}-\upsilon\Delta\bs{\bs{\omega}}-\eta\nabla(\nabla\cdot \bs{\omega})+4\chi\bs{\omega}=2\chi\nabla \times \bs{u},\\
 	\theta_t+(\bs{u}\cdot\nabla)\theta-\kappa\Delta\theta=\bs{u}\cdot \bs{e}.\\
 	\end{array}\right.
\end{eqnarray}
The unknown physical variables are the velocity field $\bs{u}$, pressure $p$, the angular velocity field $\bs{\omega}$, and temperature $\theta$. The parameters $\chi, \mu, \upsilon, \eta, \zeta$ and $\kappa$ are the Newtonian kinematic viscosity, the micro-rotation viscosity, the angular viscosity, the inertia density, and the thermal diffusivity, respectively. $\bs{e}$\ is the unit vector where $\bs{e}=(0, 1)$ in 2D and $\bs{e}=(0, 0, 1)$ in 3D, $\theta \bs{e}$ describes the action of the buoyancy force on fluid motion and $\bs{u}\cdot \bs{e}$  denotes the Rayleigh-B{\'e}nard  convection in a heated inviscid fluid. 

For convenience, the symbols and function spaces used in the following are defined. The inner product of $L^2$ is defined as
$(\bs{a}, \bs{b}) = \int_{\Omega} \bs{a} \cdot \bs{b} \, \mathrm{d} \bs{x}$,
and the corresponding $L^2$-norm,   $H^1$-norm and $H^2$-norm are denoted by $\|\bs{a}\|_{0}$, $\|\bs{a}\|_{1}$ and $\|\bs{a}\|_{2}$, respectively. The Sobolev spaces are introduced by \cite{BrezziFortin1991, BrennerScott1994, GiraultRaviart1986}
\begin{align*}
    &\bs{X} :=  H_0^1(\Omega)^d, \quad d=2,3, \\
    &\bs{V} := \left\{ \bs{v} \in \bs{X} : \nabla \cdot \bs{v} = 0,\ \bs{v}|_{\partial\Omega} = \bs{0} \right\}, \\
    &\bs{V}_1 := \left\{ \bs{v} \in L^2(\Omega)^d : \nabla \cdot\bs{v} = 0,\ \bs{v} \cdot \bs{n}|_{\partial\Omega} = 0 \right\}, \quad d=2,3\\
    &M := L_0^2(\Omega) = \left\{ \varphi \in L^2(\Omega) : \int_{\Omega} \varphi \, \mathrm{d}\bs{x} = 0 \right\}.
\end{align*}
   \begin{lemma}  [div-grad relation inequality \cite{Pyo2009, Pyo2013}] \label{2.0101'}
   Let 
$\Omega$ be a bounded Lipschitz domain such that for all $\bs{v}\in X$,  $$\|\nabla\cdot \bs{v}\|_{0}\leqslant \|\nabla \bs{v}\|_{0}.$$ 
   \end{lemma}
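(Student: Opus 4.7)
The plan is to establish the pointwise-to-integral conversion of the divergence via integration by parts, using the fact that $\bs{v}$ has zero trace on $\partial\Omega$ (which is where the $\bs{X} = H_0^1(\Omega)^d$ hypothesis enters). First I would reduce to the smooth case by density: since $C_c^\infty(\Omega)^d$ is dense in $H_0^1(\Omega)^d$ and both sides of the inequality are continuous in the $H^1$ topology, it suffices to prove the estimate for $\bs{v} \in C_c^\infty(\Omega)^d$, after which I pass to the limit.

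For a test field $\bs{v}$ with compact support, the key identity is
\begin{equation*}
\int_\Omega (\partial_i v_i)(\partial_j v_j)\,\mathrm{d}\bs{x} = -\int_\Omega v_i (\partial_i \partial_j v_j)\,\mathrm{d}\bs{x} = \int_\Omega (\partial_j v_i)(\partial_i v_j)\,\mathrm{d}\bs{x},
\end{equation*}
obtained by integrating by parts twice and swapping the order of the second derivatives; the boundary terms vanish because $\bs{v}$ vanishes near $\partial\Omega$. Expanding the divergence and summing over $i,j$ then yields
\begin{equation*}
\|\nabla\cdot\bs{v}\|_0^2 = \sum_{i,j}\int_\Omega (\partial_i v_i)(\partial_j v_j)\,\mathrm{d}\bs{x} = \sum_{i,j}\int_\Omega (\partial_j v_i)(\partial_i v_j)\,\mathrm{d}\bs{x}.
\end{equation*}
Applying the Cauchy--Schwarz inequality to each summand and using the bound $ab \le \tfrac{1}{2}(a^2 + b^2)$ gives
\begin{equation*}
\sum_{i,j}\int_\Omega (\partial_j v_i)(\partial_i v_j)\,\mathrm{d}\bs{x} \le \sum_{i,j}\|\partial_j v_i\|_0 \|\partial_i v_j\|_0 \le \sum_{i,j}\|\partial_j v_i\|_0^2 = \|\nabla\bs{v}\|_0^2,
\end{equation*}
which is exactly the desired inequality on the smooth subspace.

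The only technical point is to confirm that the density step is legitimate on a bounded Lipschitz domain. This is a classical result, ensuring that $C_c^\infty(\Omega)^d$ is dense in $H_0^1(\Omega)^d$ with respect to the $H^1$ norm; both the $L^2$ norm of the divergence and the $L^2$ norm of the gradient are continuous on $H^1$, so the inequality extends from smooth, compactly supported fields to all of $\bs{X}$. I expect this routine density argument to be the only place one must be careful, since everything else reduces to an elementary manipulation of Cauchy--Schwarz.
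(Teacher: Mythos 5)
Your proof is correct. The paper gives no argument for this lemma (it is simply quoted from the cited references), and your route---reduce to $\bs{v}\in C_c^\infty(\Omega)^d$ by density, integrate by parts twice to obtain $\int_\Omega(\partial_i v_i)(\partial_j v_j)\,\mathrm{d}\bs{x}=\int_\Omega(\partial_j v_i)(\partial_i v_j)\,\mathrm{d}\bs{x}$, then apply Cauchy--Schwarz and Young's inequality with the symmetry of the double sum---is exactly the standard proof underlying those references. The only remark worth adding is that the density step needs no Lipschitz regularity at all, since $H_0^1(\Omega)^d$ is by definition the closure of $C_c^\infty(\Omega)^d$ in the $H^1$ norm and both sides of the inequality are $H^1$-continuous, so your argument is complete as stated.
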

   
The Stokes operator is defined as $\mathcal{A}: \bs{D}(\mathcal{A}) \to \bs{V}_{1}$, such that $\mathcal{A}=-P\Delta$, where $\bs{D}(\mathcal{A})=H^{2}(\Omega)^{d}\cap \bs{V}$ and the $L^{2}$-projection $P: L^{2}(\Omega)^{d}\to \bs{V}_{1}.$ The following inequalities are presented in \cite{HeLi2009, He2015, SiWangWang2022}, which will be used frequently:
\begin{alignat*}{2}
 &\|\bs{v}\|_{L^{r}}\leqslant c(r)\|\nabla \bs{v}\|_{0},
    \qquad r\in[1,6], \qquad \forall \bs{v}\in \bs{X},
 \\[8pt]
 &\|\bs{v}\|_{L^{\infty}}+\|\nabla \bs{v}\|_{L^{3}}
    \leqslant C\|\bs{v}\|_{1}^{\frac{1}{2}} \|\mathcal{A}\bs{v}\|_{0}^{\frac{1}{2}},
    \qquad
    \|\bs{v}\|_{2}\leqslant C\|\mathcal{A}\bs{v}\|_{0},
    \qquad \forall \bs{v}\in \bs{D}(\mathcal{A}).
\end{alignat*}
Let the bilinear forms be
\begin{equation*}
a(\bs{u}, \bs{v}) = (\nabla \bs{u}, \nabla \bs{v}), \quad
d(\bs{v}, p) = (\nabla \cdot \bs{v}, p), \quad
\bs{B}(\bs{u}, \bs{v}) = (\bs{u} \cdot \nabla)\bs{v} + \frac{1}{2}(\nabla \cdot \bs{u})\bs{v},
\quad  \forall\, \bs{u}, \bs{v} \in \bs{X}, \ p \in M,
\end{equation*}
where $\bs{B}(\bs{u}, \bs{v})$ is the skew-symmetric form of the nonlinear convective terms. Assuming that the velocity field $\bs{u}$ is divergence-free, the nonlinear terms reduce to $\bs{B}(\bs{u}, \bs{v})=(\bs{u}\cdot\nabla)\bs{v}$. We define a trilinear form as
\begin{equation*}
   b(\bs{u}, \bs{v}, \bs{w})= (\bs{B}( \bs{u}, \bs{v}), \bs{w})=((\bs{u}\cdot\nabla)\bs{v},\bs{w})+\frac{1}{2}((\nabla\cdot\bs{u})\bs{v},\bs{w})
   = \frac{1}{2} ((\bs{u}\cdot\nabla)\bs{v},\bs{w}) -  \frac{1}{2} ((\bs{u}\cdot\nabla)\bs{w},\bs{v}).
\end{equation*}
 
To ensure the well-posedness and stability of the mixed formulation, 
the velocity–pressure pair $(\bs{X}, M)$ must satisfy the classical 
\textit{inf--sup} condition \cite{Temam1984}. Specifically, it requires that there exists a positive constant $\beta_0$, 
independent of the mesh size, such that 
\begin{equation*}
\exists\, \beta_{0}>0, \ \forall q\in M, \quad  
\beta_{0}\|q\|_{0}\leqslant 
\sup_{\bs{v}\in \bs{X},\ \bs{v}\neq \bs{0}}
\frac{d(\bs{v}, q)}{\|\nabla\bs{v}\|_{0}}.
\end{equation*}

 To derive the theoretical stability and error estimates of a numerical scheme, it is common to assume certain regularity for the exact solution and the initial values. These assumptions are reasonable since the initial values used in the numerical scheme are equal to those of the exact solution.

\begin{assumption}
[\cite{He2015, SiWangWang2022, LeiYangSi2018, SiLuWang2022}]
The initial values $\bs{u}_{0}, \bs{\omega}_{0}$ and $\theta_{0}$ satisfy$$\|\mathcal{A}\bs{u}_{0}\|_{0}+\|\bs{\omega}_{0}\|_{0}+\|\theta_{0}\|_{2}\leqslant C.$$
\end{assumption}

\begin{assumption}
[\cite{He2015, SiWangWang2022, LeiYangSi2018, SiLuWang2022}] 
Assuming that $(\bs{u},p,\bs{\omega}, \theta)$ is a unique local strong solution of the micropolar Rayleigh-B{\'e}nard convection system \refe{1.1} on $[0,T]$, there holds that
   \begin{align*}
       & \sup_{0 \leqslant t \leqslant T} (\|\mathcal{A}\bs{u}_{t}\|_{0}+\|\bs{\omega}_{t}\|_{0}+\|\theta\|_{2}
       +\|\bs{u}_{t}\|_{0}+\|\bs{\omega}_{t}\|_{0}+\|p_{t}\|_{1}+\|\mathcal{A}\bs{u}\|_{0}+\|\bs{\omega}\|_{0}+\|\nabla\bs{u}\|_{L^{\infty}}\nn\\
       & \quad +\|\nabla\bs{\omega}\|_{L^{\infty}}+\|\nabla\theta\|_{L^{\infty}})+\int_{0}^{T}(\|\bs{u}_{t}\|_{1}^{2}+\|\bs{\omega}_{t}\|_{1}^{2}+\|\theta_{t}\|_{1}^{2}
       +\|\bs{u}_{tt}\|_{0}^{2}+\|\bs{\omega}_{tt}\|_{0}^{2}+\|\theta_{tt}\|_{0}^{2}\nn\\
       & \quad +\|\bs{u}_{ttt}\|_{0}^{2} +\|\bs{\omega}_{ttt}\|_{0}^{2}+\|\theta_{ttt}\|_{0}^{2}
   +\|p_{t}\|_{1}^{2}) \,\mathrm{d}t\leqslant C.
   \end{align*}
   \end{assumption}
   
 \begin{lemma}[\cite{He2015}] \label{2.2'}
      Let $ a_n,b_n,c_n$ and $\gamma_n$, for integer $n\geqslant0$, be the
      nonnegative numbers such that
      \begin{align*}
      a_m+\delta_t\sum_{n=0}^m b_n\leqslant
      \delta_t\sum_{n=0}^m\gamma_n a_n+\delta_t\sum_{n=0}^m c_n+B, \quad \forall
      m\geqslant0.
      \end{align*}
      Suppose that $\delta_t\gamma_n<1$, for all $n$, and set
      $\sigma_n=(1-\delta_t\gamma_n)^{-1}$. Then
      \begin{align*}
      a_m+\delta_t\sum_{n=0}^mb_n\leqslant
      \exp(\delta_t\sum_{n=0}^m\gamma_n\sigma_n)(\delta_t\sum_{n=0}^mc_n+B),
      \quad\forall m\geqslant0.
      \end{align*}
      \end{lemma}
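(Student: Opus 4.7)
The statement is the standard ``deflated'' discrete Gronwall inequality widely used in time-stepping stability analysis, where the twist compared with the elementary version is that the diagonal coefficient $\delta_t\gamma_m$ is not assumed small, only strictly less than one. The plan is therefore to absorb the diagonal term into the left side and then run a straightforward induction.

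\textbf{Step 1 (diagonal absorption).} First I would split off the $n=m$ term from the sum on the right, obtaining
\begin{equation*}
(1-\delta_t\gamma_m)a_m+\delta_t\sum_{n=0}^m b_n
\leqslant \delta_t\sum_{n=0}^{m-1}\gamma_n a_n + E_m,
\qquad E_m:=\delta_t\sum_{n=0}^m c_n+B.
\end{equation*}
Because $\delta_t\gamma_m<1$ the factor $1-\delta_t\gamma_m$ is positive, so I can divide by it. Using $\sigma_m\geqslant 1$ on the $b_n$ sum (since the $b_n$ are nonnegative we lose nothing by not dividing there), I arrive at the working inequality
\begin{equation*}
a_m+\delta_t\sum_{n=0}^m b_n
\leqslant \sigma_m\Bigl(\delta_t\sum_{n=0}^{m-1}\gamma_n a_n + E_m\Bigr).
\end{equation*}

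\textbf{Step 2 (induction on $m$).} I would then prove by induction that $a_m+\delta_t\sum_{n=0}^m b_n\leqslant E_m\exp\bigl(\delta_t\sum_{n=0}^m\gamma_n\sigma_n\bigr)$. The base case $m=0$ reduces to $\sigma_0 E_0\leqslant E_0\exp(\delta_t\gamma_0\sigma_0)$, which follows from the elementary inequality $\sigma_0=1/(1-\delta_t\gamma_0)\leqslant \exp\bigl(\delta_t\gamma_0/(1-\delta_t\gamma_0)\bigr)=\exp(\delta_t\gamma_0\sigma_0)$. For the inductive step I substitute the hypothesis for $a_n$, $n<m$, use monotonicity $E_n\leqslant E_m$ (all $c_n\geqslant 0$), and factor out $E_m$; the remaining bracket has the form $\sigma_m\bigl(1+\delta_t\sum_{n=0}^{m-1}\gamma_n\exp(\delta_t\sum_{k=0}^n\gamma_k\sigma_k)\bigr)$. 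Bounding the inner sum by the telescoping estimate $\delta_t\gamma_n \exp(\delta_t\sum_{k=0}^n\gamma_k\sigma_k)\leqslant \exp(\delta_t\sum_{k=0}^n\gamma_k\sigma_k)-\exp(\delta_t\sum_{k=0}^{n-1}\gamma_k\sigma_k)$, and finally using the identity $\sigma_m=1+\delta_t\gamma_m\sigma_m$ together with $1+x\leqslant e^x$ at $x=\delta_t\gamma_m\sigma_m$, closes the induction.

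\textbf{Main obstacle.} There is no genuine analytic difficulty; the one place that deserves care is the bookkeeping in Step 2, specifically converting the multiplicative $\sigma_m$ accumulated at each stage into the additive exponent $\delta_t\gamma_m\sigma_m$ inside the final exponential. The key algebraic identity $\sigma_m-1=\delta_t\gamma_m\sigma_m$ (from the definition of $\sigma_m$) together with $\sigma_m\leqslant \exp(\delta_t\gamma_m\sigma_m)$ is what makes the exponents telescope into $\delta_t\sum_{n=0}^m\gamma_n\sigma_n$, exactly matching the claimed bound.
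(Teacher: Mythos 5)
Your proof is correct: the diagonal absorption using $\delta_t\gamma_m<1$, the identity $\sigma_m-1=\delta_t\gamma_m\sigma_m$ with $1+x\leqslant e^{x}$, and the telescoping bound in the induction all check out, and this is the standard argument for this form of the discrete Gronwall inequality. Note, however, that the paper does not prove this lemma at all — it is quoted from the cited reference (He, 2015) — so there is no in-paper proof to compare against; your argument matches the classical one found in that literature.
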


\section{Numerical scheme and Stability analysis}
A pressure-projection finite element scheme, which is unconditionally stable, fully decoupled, linear, and second-order accurate in time, is constructed for \refe{1.1}. In this section, the numerical scheme and stability in fully discrete form are presented.

\subsection{Second-order pressure projection scheme}

Let $T_{h}$ be a family of regular and quasi-uniform triangulation partition of $\Omega$, consisting of tetrahedral elements $K$, where $h=\max\limits_{K\in T_{h}}h_{K}$ and $h_{K}$ is the diameter of the element $K$ \cite{BrezziFortin1991, BrennerScott1994, GiraultRaviart1986}. The scalar and vector finite element space are
    \begin{align*}
       &\bs{X}_{h}:=\{\bs{v}_{h}\in C^{0}(\Omega)^{d} : \bs{v}_{h}|_{K}\in P_{k}(K)^{d}, \forall K\in T_{h}\}\cap H_{0}^{1}(\Omega)^{d}, \\
       &\bs{V}_{h}:=\{\bs{v}_{h}\in C^{0}(\Omega)^{d} : \bs{v}_{h}|_{K}\in P_{k}(K)^{d}, \forall K\in T_{h}\}\cap \bs{V},\\
       &M_{h}:=\{q_{h}\in C^{0}(\Omega)\ :q_{h}|_{K}\in P_{l}(K),\ \forall K\in T_h\}\cap L_{0}^{2}(\Omega),\\
       &\bs{W}_{h}:=\{\bs{\Lambda}_{h}\in C^{0}(\Omega)^{d} : \bs{\Lambda}_{h}|_{K}\in P_{r}(K)^{d},\  \forall K\in T_{h}\}\cap H_{0}^{1}(\Omega)^{d},\\
       &\mathcal{T}_{h}:=\{\psi_{h}\in C^{0}(\Omega): \psi_{h}|_{K}\in P_{s}(K),\forall K\in T_{h}\}\cap H_{0}^{1}(\Omega).
    \end{align*}
   where $P_{k}(K), P_{l}(K),P_{r}(K)$ and $P_{s}(K)$ represent spaces of polynomials with degree bounded uniformly with respect to $K\in T_h$ and $d=2, 3$. Let $0=t^{0}<t^{1}<\ldots<t^{N+1}=T$ be a uniform partition of the time interval $[0,T]$ with $t^{n}=n\delta_t$.

The core idea of the second-order pressure projection method is to split the pressure term from the momentum equation.
Firstly, a prediction for the velocity field is computed. Then pressure and velocity corrections are performed to enforce the divergence-free constraint. Finally, the angular velocity and temperature fields are updated independently. Details are shown in the following.
      
Starting with the initial solution $(\bs{u}_{h}^{0},\bs{\omega}_{h}^{0}, \theta_{h}^{0}, p_{h}^{0})$, the discrete initial conditions are defined by
\begin{align}  \label{chuzhi}
   & (\tilde{\bs{u}}_{h}^{0},\bs{v}_{h}) = (\bs{u}_{h}^{0},\bs{v}_{h}) = (\bs{u}_{0},\bs{v}_{h}), \quad
   (\bs{\omega}_{h}^{0},\bs{\Lambda}_{h}) = (\bs{\omega}_{0},\bs{\Lambda}_{h}), \quad
   (\theta_{h}^{0},\psi_{h}) = (\theta_{0},\psi_{h}), \nn\\
   & \nabla p_{h}^{0} = (\chi+\mu)\Delta\bs{u}_{h}^{0} 
   - \bs{u}_{h}^{0}\cdot\nabla\bs{u}_{h}^{0}
   + 2\chi\nabla\times\bs{\omega}_{h}^{0}
   +\theta_{h}^{0}\bs{e}.
\end{align}
For $(\bs{u}_{h}^{1},\bs{\omega}_{h}^{1}, \theta_{h}^{1})$, a first-order pressure projection scheme \cite{ZhangHeYang2021} can be derived based on \eqref{chuzhi}.

\textit{\textbf{Step 1.}} Find $\tilde{\bs{u}}_{h}^{n+1} \in \bs{X}_{h}$ such that for all $\bs{v}_{h} \in \bs{X}_{h}$,
\begin{align}\label{step1}
\left(\frac{3\tilde{\bs{u}}_{h}^{n+1}-4\bs{u}_{h}^{n}+\bs{u}_{h}^{n-1}}{2\delta_t},\bs{v}_{h}\right)
&+(\chi+\mu)(\nabla\tilde{\bs{u}}_{h}^{n+1},\nabla\bs{v}_{h})
+((\bar{\bs{u}}_{h}^{n}\cdot\nabla)\tilde{\bs{u}}_{h}^{n+1},\bs{v}_{h}) \notag \\
&+(\nabla p_{h}^{n}, \bs{v}_{h})
= 2\chi(\nabla \times \bar{\bs{\omega}}_{h}^{n},\bs{v}_{h}) + (\bar{\theta}_{h}^{n}\bs{e}, \bs{v}_{h}). 
\end{align}
where $\bar{\bs{\omega}}_{h}^{n}=2\bs{\omega}_{h}^{n}-\bs{\omega}_{h}^{n-1}$ and $\bar{\theta}_{h}^{n}=2\theta_{h}^{n}-\theta_{h}^{n-1}$.

\textit{\textbf{Step 2.}} Find $p_{h}^{n+1} \in M_{h}$ such that for all $q_h \in M_{h}$,
\begin{align}\label{step2}
(\nabla p_{h}^{n+1},\nabla q_h)
= -\frac{3}{2\delta_t}(\nabla\cdot\tilde{\bs{u}}_{h}^{n+1}, q_h)
+ (\nabla p_{h}^{n},\nabla q_h). 
\end{align}

\textit{\textbf{Step 3.}} Update $\bs{u}_{h}^{n+1} \in \bs{X}_{h} \oplus \nabla M_{h}$ by
\begin{align}\label{step3}
\bs{u}_{h}^{n+1}
= \tilde{\bs{u}}_{h}^{n+1}
- \frac{2\delta_t}{3} \nabla p_{h}^{n+1}
+ \frac{2\delta_t}{3} \nabla p_{h}^{n}. 
\end{align}

\textit{\textbf{Step 4.}} Find $\bs{\omega}_{h}^{n+1} \in \bs{W}_{h}$ such that for all $\Lambda_{h} \in \bs{W}_{h}$,
\begin{align}\label{step4}
\zeta\left(\frac{3\bs{\omega}_{h}^{n+1}-4\bs{\omega}_{h}^{n}+\bs{\omega}_{h}^{n-1}}{2\delta_t},\Lambda_{h}\right)
&+ \upsilon(\nabla\bs{\omega}_{h}^{n+1}, \nabla \Lambda_{h})
+ \zeta((\bar{\bs{u}}_{h}^{n}\cdot\nabla)\bs{\omega}_{h}^{n+1}, \Lambda_{h}) \notag \\
&+ \eta(\nabla \cdot \bs{\omega}_{h}^{n+1}, \nabla \cdot \Lambda_{h})
+ 4\chi(\bs{\omega}_{h}^{n+1}, \Lambda_{h})
= 2\chi(\nabla\times \bar{\bs{u}}_{h}^{n}, \Lambda_{h}). 
\end{align}
where $\bar{\bs{u}}_{h}^{n}=2\bs{u}_{h}^{n}-\bs{u}_{h}^{n-1}$.

\textit{\textbf{Step 5.}} Find $\theta_{h}^{n+1} \in \mathcal{T}_{h}$ such that for all $\psi_{h} \in \mathcal{T}_{h}$,
\begin{align}\label{step5}
\left(\frac{3\theta_{h}^{n+1}-4\theta_{h}^{n}+\theta_{h}^{n-1}}{2\delta_t},\psi_{h}\right)
+ \kappa(\nabla\theta_{h}^{n+1}, \nabla \psi_{h})
+ ((\bar{\bs{u}}_{h}^{n}\cdot\nabla)\theta_{h}^{n+1}, \psi_{h})
= (\bar{\bs{u}}_{h}^{n}\cdot \bs{e}, \psi_{h}). 
\end{align}
The advantage of the numerical scheme \refe{chuzhi}-\refe{step5} lies in the linearity and fully decoupling, and $\bs{u}$, $p$, $\bs{\omega}$ and $\theta$ can be solved linearly and independently at each time step,  providing a feasible numerical approach to solve equations \refe{1.1} efficiently.

\subsection{Stability analysis}
In this subsection, the stability analysis of the second-order pressure projection scheme \refe{chuzhi}–\refe{step5} is presented.
    \begin{theorem}\label{3.1:}
      Suppose that Assumptions (A.1)-(A.2) are satisfied,
the scheme is unconditionally stable, for all $\delta_t>0$, there exists a prior bound such that
\begin{align*}
&\|\bs{u}_{h}^{N+1}\|_{0}^{2}
+ \|2\bs{u}_{h}^{N+1} - \bs{u}_{h}^{N}\|_{0}^{2}
+ \zeta\|\bs{\omega}_{h}^{N+1}\|_{0}^{2}
+ \zeta\|2\bs{\omega}_{h}^{N+1} - \bs{\omega}_{h}^{N}\|_{0}^{2}
+ \|\theta_{h}^{N+1}\|_{0}^{2}
+ \|2\theta_{h}^{N+1} - \theta_{h}^{N}\|_{0}^{2} \nonumber\\
& \quad + \delta_{t} \sum_{n=1}^{N} \Bigl(
\|\bs{u}_{h}^{n+1} - 2\bs{u}_{h}^{n} + \bs{u}_{h}^{n-1}\|_{0}^{2}
+ (\chi+\mu)\|\nabla\tilde{\bs{u}}_{h}^{n+1}\|_{0}^{2}
+ \zeta\|\bs{\omega}_{h}^{n+1} - 2\bs{\omega}_{h}^{n} + \bs{\omega}_{h}^{n-1}\|_{0}^{2}
+ \upsilon\|\nabla\bs{\omega}_{h}^{n+1}\|_{0}^{2} \nonumber\\
& \quad
+ 8\eta\|\nabla\cdot\bs{\omega}_{h}^{n+1}\|_{0}^{2}
+ 16\chi\|\bs{\omega}_{h}^{n+1}\|_{0}^{2}
+ \|\theta_{h}^{n+1} - 2\theta_{h}^{n} + \theta_{h}^{n-1}\|_{0}^{2}
+ \kappa\|\nabla\theta_{h}^{n+1}\|_{0}^{2}
\Bigr)
+ \frac{\delta_{t}^{2}}{3} \|\nabla p^{N+1}\|_{0}^{2} \nonumber\\
& \leqslant \Bigl(
\|\bs{u}_{h}^{0}\|_{0}^{2}
+ \|2\bs{u}_{h}^{1} - \bs{u}_{h}^{0}\|_{0}^{2}
+ \zeta\|\bs{\omega}_{h}^{0}\|_{0}^{2}
+ \zeta\|2\bs{\omega}_{h}^{1} - \bs{\omega}_{h}^{0}\|_{0}^{2}
+ \|\theta_{h}^{0}\|_{0}^{2}
+ \|2\theta_{h}^{1} - \theta_{h}^{0}\|_{0}^{2}
\Bigr)\nn\\
& \quad +C\delta_t(\|\bar{\bs{u}}_{h}^{n}\|_{0}^{2}+\|\bar{\bs{\omega}}_{h}^{n}\|_{0}^{2}+\|\bar{\theta}_{h}^{n}\|_{0}^{2})
+ \frac{4\delta_t^{2}}{3} \|\nabla p_{h}^{0}\|_{0}^{2},
\end{align*}
where $C$ is a positive constant independent of $h$ and $\delta_t$.
\end{theorem}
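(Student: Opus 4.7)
The plan is to establish an energy-type inequality by testing each of the five substeps against a suitable test function, summing the resulting identities, and finally invoking the discrete Gronwall lemma from the preliminaries. The central algebraic tool is the standard BDF2 polarization identity
\begin{equation*}
2(3a^{n+1}-4a^n+a^{n-1},a^{n+1}) = \|a^{n+1}\|_0^2+\|2a^{n+1}-a^n\|_0^2-\|a^n\|_0^2-\|2a^n-a^{n-1}\|_0^2+\|a^{n+1}-2a^n+a^{n-1}\|_0^2,
\end{equation*}
which is precisely what produces, after telescoping, the pairs $\|a^{N+1}\|_0^2+\|2a^{N+1}-a^N\|_0^2$ and the dissipative increment $\|a^{n+1}-2a^n+a^{n-1}\|_0^2$ that appear on the left-hand side of the theorem, applied here to $a\in\{\bs{u}_h,\sqrt{\zeta}\bs{\omega}_h,\theta_h\}$.

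Concretely, I would first take $\bs{v}_h=2\delta_t\tilde{\bs{u}}_h^{n+1}$ in \refe{step1}, $\bs{\Lambda}_h=2\delta_t\bs{\omega}_h^{n+1}$ in \refe{step4}, and $\psi_h=2\delta_t\theta_h^{n+1}$ in \refe{step5}. Interpreting the convective terms through the skew-symmetric form $\bs{B}$, the nonlinear contributions $b(\bar{\bs{u}}_h^n,\tilde{\bs{u}}_h^{n+1},\tilde{\bs{u}}_h^{n+1})$ and its analogues vanish identically, while the diffusive terms yield directly $(\chi+\mu)\|\nabla\tilde{\bs{u}}_h^{n+1}\|_0^2$, $\upsilon\|\nabla\bs{\omega}_h^{n+1}\|_0^2$, $\eta\|\nabla\cdot\bs{\omega}_h^{n+1}\|_0^2$, $4\chi\|\bs{\omega}_h^{n+1}\|_0^2$ and $\kappa\|\nabla\theta_h^{n+1}\|_0^2$ on the left-hand side.

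The delicate piece is the pressure. From \refe{step3} one has
\begin{equation*}
\|\tilde{\bs{u}}_h^{n+1}\|_0^2 = \|\bs{u}_h^{n+1}\|_0^2 + \frac{4\delta_t^2}{9}\|\nabla p_h^{n+1}-\nabla p_h^n\|_0^2 + \frac{4\delta_t}{3}(\bs{u}_h^{n+1},\nabla p_h^{n+1}-\nabla p_h^n),
\end{equation*}
which converts the BDF2 quadratic quantity in $\tilde{\bs{u}}_h^{n+1}$ into one in $\bs{u}_h^{n+1}$ plus pressure residuals. Testing \refe{step2} against a multiple of $p_h^{n+1}$ (and against the increment $p_h^{n+1}-p_h^n$) and combining with the gradient contribution $2\delta_t(\nabla p_h^n,\tilde{\bs{u}}_h^{n+1})$ that arose from \refe{step1} produces, via the standard projection-scheme telescoping, the boundary term $\tfrac{\delta_t^2}{3}\|\nabla p_h^{N+1}\|_0^2-\tfrac{\delta_t^2}{3}\|\nabla p_h^{0}\|_0^2$ together with nonnegative remainders $\|\nabla p_h^{n+1}-\nabla p_h^n\|_0^2$ that can be discarded.

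The coupling source terms $2\chi(\nabla\times\bar{\bs{\omega}}_h^n,\tilde{\bs{u}}_h^{n+1})$, $(\bar{\theta}_h^n\bs{e},\tilde{\bs{u}}_h^{n+1})$, $2\chi(\nabla\times\bar{\bs{u}}_h^n,\bs{\omega}_h^{n+1})$ and $(\bar{\bs{u}}_h^n\cdot\bs{e},\theta_h^{n+1})$ are then handled by Cauchy--Schwarz together with Young's inequality; the high-order pieces are absorbed into the dissipation terms just identified, while the low-order pieces contribute the $C\delta_t(\|\bar{\bs{u}}_h^n\|_0^2+\|\bar{\bs{\omega}}_h^n\|_0^2+\|\bar{\theta}_h^n\|_0^2)$ term on the right (read as a sum over $n$). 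Summing over $n=1,\ldots,N$, telescoping, and applying the discrete Gronwall lemma closes the estimate. I expect the main obstacle to be the bookkeeping in the pressure-handling step: the interplay between $\tilde{\bs{u}}_h^{n+1}$ and $\bs{u}_h^{n+1}$ via \refe{step3}, together with the need to land exactly on the coefficient $\delta_t^2/3$ for $\|\nabla p_h^{N+1}\|_0^2$, forces a careful sign analysis to keep every residual of definite sign and thereby guarantee that the bound is unconditional in $\delta_t$.
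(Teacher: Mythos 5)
Your proposal follows essentially the same route as the paper: test the three substeps with multiples of $\tilde{\bs{u}}_{h}^{n+1}$, $\bs{\omega}_{h}^{n+1}$, $\theta_{h}^{n+1}$, apply the BDF2 polarization identity, convert the $\tilde{\bs{u}}_{h}^{n+1}$ quantities into $\bs{u}_{h}^{n+1}$ plus telescoping pressure terms by squaring \refe{step3} and invoking \refe{step2}, absorb the coupling terms via Young's inequality, and finish by summation and the discrete Gr\"{o}nwall lemma. Apart from the immaterial $2\delta_t$ versus $4\delta_t$ scaling of the test functions, this is the paper's argument, so the proposal is correct and essentially identical in approach.
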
\vspace{0.5em}

\begin{proof}
Setting $\bs{v}_{h}=4\delta_t\tilde{\bs{u}}_{h}^{n+1}$ in \refe{step1}, $\bs{\Lambda}_{h} = 4\delta_t \bs{\omega}_{h}^{n+1}$ in \refe{step4}, and $\psi_{h}=4\delta_t \theta_{h}^{n+1}$ in \refe{step5}, respectively, there are
\begin{align}
\label{3.2'}
&2(3\tilde{\bs{u}}_{h}^{n+1} - 4\bs{u}_{h}^{n} + \bs{u}_{h}^{n-1}, \tilde{\bs{u}}_{h}^{n+1})
+ 4(\chi+\mu)\delta_t \|\nabla \tilde{\bs{u}}_{h}^{n+1}\|_{0}^{2}
+ 4\delta_t (\nabla p_{h}^{n}, \tilde{\bs{u}}_{h}^{n+1}) \nonumber \\
& \hspace{16em} = 8\chi \delta_t (\nabla \times \bar{\bs{\omega}}_{h}^{n}, \tilde{\bs{u}}_{h}^{n+1})
+ 4\delta_t (\bar{\theta}_{h}^{n}\bs{e}, \tilde{\bs{u}}_{h}^{n+1}), \\
\label{3.8'}
&2\zeta(3\bs{\omega}_{h}^{n+1} - 4\bs{\omega}_{h}^{n} + \bs{\omega}_{h}^{n-1}, \bs{\omega}_{h}^{n+1})
+ 4\upsilon \delta_t \|\nabla \bs{\omega}_{h}^{n+1}\|_{0}^{2}
+ 4\eta \delta_t \|\nabla \cdot \bs{\omega}_{h}^{n+1}\|_{0}^{2} 
+ 16 \chi \delta_t \|\bs{\omega}_{h}^{n+1}\|_{0}^{2}\nonumber \\
& \hspace{16em} = 8 \chi \delta_t (\nabla \times \bar{\bs{u}}_{h}^{n}, \bs{\omega}_{h}^{n+1}), \\
\label{3.9'}
&2(3\theta_{h}^{n+1} - 4\theta_{h}^{n} + \theta_{h}^{n-1}, \theta_{h}^{n+1})
+ 4 \kappa \delta_t \|\nabla \theta_{h}^{n+1}\|_{0}^{2}
= 4 \delta_t (\bar{\bs{u}}_{h}^{n} \cdot \bs{e}, \theta_{h}^{n+1}).
\end{align}
From the weak divergence-free condition of $\bs{u}_{h}^{n+1}$, along with the identity $$2(3a-4b+c, a)=|a|^{2}-|b|^{2}+|2a-b|^{2}-|2b-c|^{2}+|a+2b-c|^{2}$$ and \refe{step3}, we obtain
\begin{align}
\label{3.4'}
2(3\tilde{\bs{u}}_{h}^{n+1}-4\bs{u}_{h}^{n}+\bs{u}_{h}^{n-1}, \tilde{\bs{u}}_{h}^{n+1})
&= \|\bs{u}_{h}^{n+1}\|_{0}^{2} - \|\bs{u}_{h}^{n}\|_{0}^{2}
+ \|2\bs{u}_{h}^{n+1} - \bs{u}_{h}^{n}\|_{0}^{2}
- \|2\bs{u}_{h}^{n} - \bs{u}_{h}^{n-1}\|_{0}^{2}\nn\\
& + \|\bs{u}_{h}^{n+1} - 2\bs{u}_{h}^{n} + \bs{u}_{h}^{n-1}\|_{0}^{2}
+6\|\tilde{\bs{u}}_{h}^{n+1} - \bs{u}_{h}^{n+1}\|_{0}^{2}.
\end{align}
We rewrite \refe{step3} as
\begin{align}
\label{3.3'}
&\bs{u}_{h}^{n+1}+\frac{2}{3}\delta_t\nabla p_{h}^{n+1}=\tilde{\bs{u}}_{h}^{n+1}+\frac{2}{3}\delta_t\nabla p_{h}^{n},
\end{align}
by taking the $L^2$ inner product of \refe{3.3'} with itself and applying relation \refe{step2}, we get
\begin{align}
\label{3.6'}
&(\nabla p_{h}^{n}, \tilde{\bs{u}}_{h}^{n+1})
= \frac{3}{4\delta_t}\|\bs{u}_{h}^{n+1}\|_{0}^{2}
- \frac{3}{4\delta_t}\|\tilde{\bs{u}}_{h}^{n+1}\|_{0}^{2}
+ \frac{\delta_t}{3}\|\nabla p_{h}^{n+1}\|_{0}^{2}
- \frac{\delta_t}{3}\|\nabla p_{h}^{n}\|_{0}^{2}, 
\end{align}
From \refe{3.2'}-\refe{3.6'}, one obtains
   \begin{align}\label{3.10'}
   &\|\bs{u}_{h}^{n+1}\|_{0}^{2}-\|\bs{u}_{h}^{n}\|_{0}^{2}+\|2\bs{u}_{h}^{n+1}-\bs{u}_{h}^{n}\|_{0}^{2}-\|2\bs{u}_{h}^{n}-\bs{u}_{h}^{n-1}\|_{0}^{2}
  +\|\bs{u}_{h}^{n+1}-2\bs{u}_{h}^{n}+\bs{u}_{h}^{n-1}\|_{0}^{2}+6\|\tilde{\bs{u}}_{h}^{n+1}-\bs{u}_{h}^{n+1}\|_{0}^{2}\nn\\
& \quad +4(\chi+\mu)\delta_t\|\nabla\tilde{\bs{u}}_{h}^{n+1}\|_{0}^{2}+\zeta\|\bs{\omega}_{h}^{n+1}\|_{0}^{2}-\zeta\|\bs{\omega}_{h}^{n}\|_{0}^{2}+\zeta\|2\bs{\omega}_{h}^{n+1}-\bs{\omega}_{h}^{n}\|_{0}^{2}-\zeta\|2\bs{\omega}_{h}^{n}-\bs{\omega}_{h}^{n-1}\|_{0}^{2}\nn\\
  & \quad +\zeta\|\bs{\omega}_{h}^{n+1}-2\bs{\omega}_{h}^{n}+\bs{\omega}_{h}^{n-1}\|_{0}^{2}
+4\upsilon\delta_t\|\nabla\bs{\omega}_{h}^{n+1}\|_{0}^{2}+4\eta\delta_t\|\nabla\cdot\bs{\omega}_{h}^{n+1}\|_{0}^{2}+16\chi\delta_t\|\bs{\omega}_{h}^{n+1}\|_{0}^{2}\nn\\
  & \quad +\|\theta_{h}^{n+1}\|_{0}^{2}-\|\theta_{h}^{n}\|_{0}^{2} +\|2\theta_{h}^{n+1}-\theta_{h}^{n}\|_{0}^{2}
  -\|2\theta_{h}^{n}-\theta_{h}^{n-1}\|_{0}^{2}+\|\theta_{h}^{n+1}-2\theta_{h}^{n}+\theta_{h}^{n-1}\|_{0}^{2}\nn\\
& \quad +4\kappa\delta_t\|\nabla\theta_{h}^{n+1}\|_{0}^{2}+\frac{4\delta_{t}^{2}}{3}(\|\nabla p_{h}^{n+1}\|_{0}^{2}-\|\nabla p_{h}^{n}\|_{0}^{2})+3\|\tilde{\bs{u}}_{h}^{n+1}-\bs{u}_{h}^{n+1}\|_{0}^{2}\nn\\
  & = 8\chi\delta_t (\nabla\times\bar{\bs{\omega}}_{h}^{n}, \tilde{\bs{u}}_{h}^{n+1})+4\delta_t (\bar{\theta}_{h}^{n}\bs{e},\tilde{\bs{u}}_{h}^{n+1})+8\chi\delta_t(\nabla\times\bar{\bs{u}}_{h}^{n}, \bs{\omega}_{h}^{n+1})+4\delta_t(\bar{\bs{u}}_{h}^{n}\cdot \bs{e}, \theta_{h}^{n+1}).
   \end{align} 
Invoking H\"{o}lder's inequality and Young's inequality,
\begin{align}
\label{3.11'}
&|4\chi\delta_t (\nabla\times\bar{\bs{\omega}}_{h}^{n}, \tilde{\bs{u}}_{h}^{n+1})|
= |4\chi\delta_t (\bar{\bs{\omega}}_{h}^{n}, \nabla\times\tilde{\bs{u}}_{h}^{n+1})| \nn \\
& \hspace{9em} \leqslant C\delta_t \|\bar{\bs{\omega}}_{h}^{n}\|_{0} \|\nabla \tilde{\bs{u}}_{h}^{n+1}\|_{0} 
\leqslant \frac{3(\chi+\mu)}{2} \delta_t \|\nabla \tilde{\bs{u}}_{h}^{n+1}\|_{0}^{2}
+ C\delta_t \|\bar{\bs{\omega}}_{h}^{n}\|_{0}^{2},
\\
\label{3.12'}
&|4\delta_t (e\bar{\theta}_{h}^{n}, \tilde{\bs{u}}_{h}^{n+1})|
\leqslant C\delta_t \|\bar{\theta}_{h}^{n}\|_{0} \|\nabla \tilde{\bs{u}}_{h}^{n+1}\|_{0}
\leqslant \frac{3(\chi+\mu)}{2} \delta_t \|\nabla \tilde{\bs{u}}_{h}^{n+1}\|_{0}^{2}
+ C\delta_t \|\bar{\theta}_{h}^{n}\|_{0}^{2}, 
\\
\label{3.13'}
&|8\chi\delta_t(\nabla\times\bar{\bs{u}}_{h}^{n}, \bs{\omega}_{h}^{n+1})|
\leqslant C\delta_t \|\bar{\bs{u}}_{h}^{n}\|_{0} \|\nabla \bs{\omega}_{h}^{n+1}\|_{0}
\leqslant 3\upsilon \delta_t \|\nabla \bs{\omega}_{h}^{n+1}\|_{0}^{2}
+ C\delta_t \|\bar{\bs{u}}_{h}^{n}\|_{0}^{2}, 
\\
\label{3.14'}
&|4\delta_t(\bar{\bs{u}}_{h}^{n} \cdot \bs{e}, \theta_{h}^{n+1})|
\leqslant C\delta_t \|\bar{\bs{u}}_{h}^{n}\|_{0} \|\nabla \theta_{h}^{n+1}\|_{0}
\leqslant 3\kappa \delta_t \|\nabla \theta_{h}^{n+1}\|_{0}^{2}
+ C\delta_t \|\bar{\bs{u}}_{h}^{n}\|_{0}^{2}.
\end{align}
Combining \refe{3.10'}-\refe{3.14'} and summing up from $n=0$ to $N$, we derive
   \begin{align*}
   &\|\bs{u}_{h}^{N+1}\|_{0}^{2}+\|2\bs{u}_{h}^{N+1}-\bs{u}_{h}^{N}\|_{0}^{2}+\zeta\|\bs{\omega}_{h}^{N+1}\|_{0}^{2}+\zeta\|2\bs{\omega}_{h}^{N+1}-\bs{\omega}_{h}^{N}\|_{0}^{2}+\|\theta_{h}^{N+1}\|_{0}^{2}
  +\|2\theta_{h}^{N+1}-\theta_{h}^{N}\|_{0}^{2}\nn\\
  & \quad +\sum_{n=0}^{n+1}(9\|\tilde{\bs{u}}_{h}^{n+1}-\bs{u}_{h}^{n+1}\|_{0}^{2}+ +\|\bs{u}_{h}^{n+1}-2\bs{u}_{h}^{n}+\bs{u}_{h}^{n-1}\|_{0}^{2}+\|\theta_{h}^{n+1}-2\theta_{h}^{n}+\theta_{h}^{n-1}\|_{0}^{2}\nn\\
  & \quad+\zeta\|\bs{\omega}_{h}^{n+1}-2\bs{\omega}_{h}^{n}+\bs{\omega}_{h}^{n-1}\|_{0}^{2}+(\chi+\mu)\delta_t\|\nabla\tilde{\bs{u}}_{h}^{n+1}\|_{0}^{2}+\upsilon\delta_t\|\nabla\bs{\omega}_{h}^{n+1}\|_{0}^{2}+4\eta\delta_t\|\nabla\cdot\bs{\omega}_{h}^{n+1}\|_{0}^{2}\nn\\
  & \quad+16\chi\delta_t\|\bs{\omega}_{h}^{n+1}\|_{0}^{2}+
  +\kappa\delta_t\|\nabla\theta_{h}^{n+1}\|_{0}^{2})+\frac{4\delta_{t}^{2}}{3}\|\nabla p_{h}^{N+1}\|_{0}^{2}\nn\\
  & \leqslant \|\bs{u}_{h}^{0}\|_{0}^{2}+\|2\bs{u}_{h}^{1}-\bs{u}_{h}^{0}\|_{0}^{2}+\zeta\|\bs{\omega}_{h}^{0}\|_{0}^{2}+\zeta\|2\bs{\omega}_{h}^{1}-\bs{\omega}_{h}^{0}\|_{0}^{2}+\|\theta_{h}^{0}\|_{0}^{2}
  +\|2\theta_{h}^{1}-\theta_{h}^{0}\|_{0}^{2}+\frac{4\delta_t^{2}}{3}\|\nabla p_{h}^{0}\|_{0}^{2}\nn\\
  & \quad+\sum_{n=0}^{n+1}(C\delta_t\|\bar{\bs{u}}_{h}^{n}\|_{0}^{2}+C\delta_t\|\bar{\bs{\omega}}_{h}^{n}\|_{0}^{2}+C\delta_t\|\bar{\theta}_{h}^{n}\|_{0}^{2}).
     \end{align*}
In light of Gr\"{o}nwall's lemma, the proof is completed.
\end{proof}

\section{Error estimates}
In this section, the convergence analysis of the fully discrete scheme \refe{chuzhi}-\refe{step5} is presented . We first derive suboptimal error estimates for the velocity, magnetic field, and angular velocity. Then, by introducing the inverse Stokes operator together with the dual-norm technique, we overcome the convergence-order reduction caused by the pressure term and subsequently obtain optimal error estimates. 

In the error analysis, the exact solutions at time $t^{n+1}$ are denoted as $\bs{u}(t^{n+1})$, $\bs{\omega}(t^{n+1})$, $p(t^{n+1})$, and $\theta(t^{n+1})$. From \refe{1.1}, we obtain
\begin{align}
\label{4.02'}
&\frac{3\bs{u}(t^{n+1}) - 4\bs{u}(t^{n}) + \bs{u}(t^{n-1})}{2\delta_t}
- (\chi+\mu)\Delta\bs{u}(t^{n+1})
+ (\bs{u}(t^{n+1})\cdot\nabla)\bs{u}(t^{n+1})
+ \nabla p(t^{n+1}) \notag\\
&\qquad\qquad\qquad\qquad\qquad\qquad\quad= 2\chi(\nabla\times\bs{\omega}(t^{n+1})) + \theta(t^{n+1})\bs{e} + R_{\bs{u}}^{n+1}, 
\\
\label{4.1}
&\nabla\cdot\bs{u}(t^{n+1}) = 0, 
\\
\label{4.2}
&\zeta\frac{3\bs{\omega}(t^{n+1}) - 4\bs{\omega}(t^{n}) + \bs{\omega}(t^{n-1})}{2\delta_t}
- \upsilon\Delta\bs{\omega}(t^{n+1})
+ \zeta(\bs{u}(t^{n+1})\cdot\nabla)\bs{\omega}(t^{n+1}) \notag\\
&\hspace{11em} - \eta\nabla(\nabla\cdot\bs{\omega}(t^{n+1}))
+ 4\chi\bs{\omega}(t^{n+1})
= 2\chi \nabla\times\bs{u}(t^{n+1}) + R_{\bs{\omega}}^{n+1}, 
\\
\label{4.3}
&\frac{3\theta(t^{n+1}) - 4\theta(t^{n}) + \theta(t^{n-1})}{2\delta_t}
- \kappa\Delta\theta(t^{n+1})
+ (\bs{u}(t^{n+1})\cdot\nabla)\theta(t^{n+1})
= \bs{u}(t^{n+1})\cdot \bs{e} + R_{\theta}^{n+1},
\end{align}
where 
\begin{align*}
R_{\bs{u}}^{n+1} &= \frac{3\bs{u}(t^{n+1}) - 4\bs{u}(t^{n}) + \bs{u}(t^{n-1})}{2\delta_t} - \bs{u}_{t}(t^{n+1}), \qquad
& R_{\bs{\omega}}^{n+1} &= \frac{3\bs{\omega}(t^{n+1}) - 4\bs{\omega}(t^{n}) + \bs{\omega}(t^{n-1})}{2\delta_t} - \bs{\omega}_{t}(t^{n+1}),\\
R_{\theta}^{n+1} &= \frac{3\theta(t^{n+1}) - 4\theta(t^{n}) + \theta(t^{n-1})}{2\delta_t} - \theta_{t}(t^{n+1}). 
\end{align*}
To meet the requirements of the fully discrete error analysis,
\label{Lem3.2}
the Stokes projection is defined by \cite{He2015}
\begin{align*}
(R_h(\bs{v},q),Q_h(\bs{v},q)):(\bs{X}, M)\longrightarrow(\bs{X}_h, M_h)
\end{align*}
satisfying
\begin{align*}
\begin{split}
&\upsilon(\nabla(R_h(\bs{v},q)-\bs{v}),\nabla \bs{v}_h)-(Q_h(\bs{v},q)-q, \nabla\cdot \bs{v}_h)=0, \ \forall \bs{v}_h\in \bs{X}_h,\\
&(\nabla\cdot (R_h(\bs{u},p)-\bs{u}), q_h)=0, \ \forall q_h\in M_h. \\
\end{split}
\end{align*}
Assume that $\bs{v} \in H_0^2(\Omega)^d\cap H^{r+1}(\Omega)^d$
 and $q\in L_0^2(\Omega)\cap H^r(\Omega) (d=2,3)$, the Stokes projection satisfies the estimate
\begin{align*}
&\|R_h(\bs{v},q) - \bs{v}\|_0 
\ + h \left( \|\nabla(R_h(\bs{v},q)-\bs{v})\|_0 + \|Q_h(\bs{v},q)-q\|_0 \right)
\leqslant C h^{r+1}(\|\bs{v}\|_{r+1} + \|q\|_r),\\
&\|R_h(\bs{v}, q)\|_{L^{\infty}} 
\ + \|\nabla R_h(\bs{v}, p)\|_{L^{3}}
\leqslant C(\|\bs{v}\|_2 + \|q\|_1),
\\
&\|\partial_{t} \nabla(R_h(\bs{v},q) - \bs{v})\|_0 
\ + h \|\partial_{t} \nabla(Q_h(\bs{v},q) - q)\|_0 
\leqslant C h^{r+1}(\|\partial_{t} \nabla \bs{v}\|_{r+1} + \|\partial_{t} \nabla q\|_r).
\end{align*}
Similarly, the $H^1$-orthogonal projection $R_{0h}:\bs{X}\rightarrow \bs{X}_h$ is defined by
\begin{align*}
(\nabla R_{0h}(\bs{v}) -\nabla \bs{v},\nabla \mathbf{w}_h)=0, \qquad  \forall \mathbf{w}_h\in \bs{X}_h,
\end{align*}
for any $\bs{v}\in \bs{X}\cap H^{r+1}(\Omega)^{d} (d=1,2,3)$, the projection satisfies the estimate
\begin{align*}
&\|R_{0h}(\bs{v}) -\bs{v}\|_0+h\|\nabla(R_{0h}(\bs{v})-\bs{v})\|_0\leqslant Ch^{r+1}\|\bs{v}\|_{r+1},\\
&\|R_{0h}(\bs{v})\|_{L^{\infty}} \leqslant C\|\bs{v}\|_2, \quad |\nabla R_{0h}(\bs{v})\|_{L^{\infty}} \leqslant C\|\bs{v}\|_{W^{1,\infty}},
\end{align*}
for simplicity, note the residuals as that
\begin{align*}
R_{\bs{\omega} h}^{n+1} &= R_{\bs{\omega} h}(\bs{\omega}(t^{n+1})), \quad 
& R_{\bs{u} h}^{n+1} &= R_{\bs{u} h}(\bs{u}(t^{n+1}),\ p(t^{n+1})), \\
Q_h^{n+1} &= Q_h(\bs{u}(t^{n+1}),\ p(t^{n+1})), \quad 
& R_{\theta h}^{n+1} &= R_{\theta h}(\theta(t^{n+1})), \quad n = 1, 2, \ldots, N+1.
\end{align*}
Accordingly, it follows that
\begin{align}
\label{4.4}
&\frac{3R_{\bs{u}h}^{n+1} - 4R_{\bs{u}h}^{n} + R_{\bs{u}h}^{n-1}}{2\delta_t} 
- (\chi + \mu)\Delta R_{\bs{u}h}^{n+1}
+ (R_{\bs{u}h}^{n+1} \cdot \nabla)R_{\bs{u}h}^{n+1}
+ \nabla Q_{h}^{n+1} \nonumber\\
&\hspace{18em} = 2\chi \nabla\times R_{\bs{\omega} h}^{n+1} + e_3 R_{\theta h}^{n+1} + R_{ul}^{n+1}, 
\\
\label{4.5}
&\nabla \cdot R_{\bs{u}h}^{n+1} = 0, 
\\
\label{4.6}
&\zeta\frac{3R_{\bs{\omega} h}^{n+1} - 4R_{\bs{\omega} h}^{n} + R_{\bs{\omega} h}^{n-1}}{2\delta_t}
- \upsilon \Delta R_{\bs{\omega} h}^{n+1}
+ \zeta(R_{\bs{u}h}^{n+1} \cdot \nabla)R_{\bs{\omega} h}^{n-1}
- \eta \nabla(\nabla \cdot R_{\bs{\omega} h}^{n-1}) + 4\chi R_{\bs{\omega} h}^{n+1} \nonumber \\
&\hspace{18em} = 2\chi \nabla\times R_{\bs{u} h}^{n+1} + R_{\bs{\omega} l}^{n+1}, \\
\label{4.7}
&\frac{3 R_{\theta h}^{n+1} - 4R_{\theta h}^{n} + R_{\theta h}^{n-1}}{2\delta_t}
- \kappa \Delta R_{\theta h}^{n+1}
+ (R_{\bs{u} h}^{n+1} \cdot \nabla)R_{\theta h}^{n+1}  = R_{\bs{u} h}^{n+1} \cdot \bs{e} + R_{\theta l}^{n+1},
\end{align}
where 
\begin{align*}
R_{\bs{u}l}^{n+1} &= \frac{3R_{\bs{u}h}^{n+1} - 4R_{\bs{u}h}^{n} + R_{\bs{u}h}^{n-1}}{2\delta_t} - \bs{u}_{t}(t^{n+1}), \qquad
& R_{\bs{\omega} l}^{n+1} &= \frac{3R_{\bs{\omega} h}^{n+1} - 4R_{\bs{\omega} h}^{n} + R_{\bs{\omega} h}^{n-1}}{2\delta_t} - \bs{\omega}_{t}(t^{n+1}), \\
R_{\theta l}^{n+1} &= \frac{3R_{\theta h}^{n+1} - 4R_{\theta h}^{n} + R_{\theta h}^{n-1}}{2\delta_t} - \theta_{t}(t^{n+1}).
\end{align*}
To simplify the analysis, the relevant errors and residuls are defined by
\begin{align*}
&\epsilon_{\bs{u}}^{n+1} = R_{\bs{u}h}^{n+1} - \bs{u}_{h}^{n+1}, &
& \tilde{\epsilon}_{\bs{u}}^{n+1} = R_{\bs{u}h}^{n+1} - \tilde{\bs{u}}_{h}^{n+1}, &
& \epsilon_{\bs{\omega}}^{n+1} = R_{\bs{\omega} h}^{n+1} - \bs{\omega}_{h}^{n+1}, \\
& \epsilon_{\theta}^{n+1} = R_{\theta h}^{n+1} - \theta_{h}^{n+1}, &
& \epsilon_{p}^{n+1} = Q_{h}^{n+1} - p_{h}^{n+1}, \\
& \bar{\epsilon}_{\bs{u}}^{n} = 2\epsilon_{\bs{u}}^{n} - \epsilon_{\bs{u}}^{n-1}, &
& \bar{\epsilon}_{\bs{\omega}}^{n} = 2\epsilon_{\bs{\omega}}^{n} - \epsilon_{\bs{\omega}}^{n-1}, &
& \bar{\epsilon}_{\theta}^{n} = 2\epsilon_{\theta}^{n} - \epsilon_{\theta}^{n-1},\\
& \bar{R}_{\bs{u} h}^{n} = 2R_{\bs{u} h}^{n} - R_{\bs{u} h}^{n-1}, &
& \bar{R}_{\bs{\omega} h}^{n} = 2R_{\bs{\omega} h}^{n} - R_{\bs{\omega} h}^{n-1}, &
& \bar{R}_{\theta h}^{n} = 2R_{\theta h}^{n} - R_{\theta h}^{n-1}.
\end{align*}

\begin{assumption}
[\cite{GuermondQuartapelle1998, GuermondShen2003}]
$\operatorname{max}(\|\epsilon_{\bs{u}}^{0}\|_{0}, \|\epsilon_{\bs{u}}^{1}\|_{0}, \|\epsilon_{\bs{\omega}}^{0}\|_{0}, \|\epsilon_{\bs{\omega}}^{1}\|_{0}, \|\epsilon_{\theta}^{0}\|_{0}, \|\epsilon_{\theta}^{1}\|_{0})\leqslant C h^{l+1}$, and $\|\epsilon_{p}^{1}\|_{1}\leqslant C$.
\end{assumption}
Based on Assumptions (A.1)-(A.3), error estimates for the velocity field and the angular velocity field can be obtained.

\subsection{The suboptimal error estimate}
To establish optimal error estimates for the second-order pressure projection scheme, a suboptimal error estimate is first derived. This preliminary estimate serves as a key step towards obtaining optimal convergence results.
\begin{theorem}\label{4.1a}   Suppose that Assumptions (A.1)-(A.3) hold. Then, the errors satisfy
\begin{align}\label{4.8}
&\|\epsilon_{\bs{u}}^{N+1}\|_{0}^{2}+\|2\epsilon_{\bs{u}}^{N+1}-\epsilon_{\bs{u}}^{N-1}\|_{0}^{2}+\zeta\|\epsilon_{\bs{\omega}}^{N+1}\|_{0}^{2}+\zeta\|2\epsilon_{\bs{\omega}}^{N+1}-\epsilon_{\bs{\omega}}^{N-1}\|_{0}^{2}
+\|\epsilon_{\theta}^{N+1}\|_{0}^{2}+\|2\epsilon_{\theta}^{N+1}-\epsilon_{\theta}^{N-1}\|_{0}^{2}\nn\\
&\quad+\frac{8\delta_{t}^{2}}{3}\|\nabla\epsilon_{p}^{N+1}\|_{0}^{2}+\sum_{n=0}^{N}\Bigl((\chi+\mu)\delta_{t}\|\nabla\tilde{\epsilon}_{\bs{u}}^{n+1}\|_{0}^{2}+\upsilon\delta_{t}\|\nabla\epsilon_{\bs{\omega}}^{n+1}\|_{0}^{2}+\kappa\delta_{t}\|\nabla\epsilon_{\theta}^{n+1}\|_{0}^{2}
\nn\\
&\quad+\|\epsilon_{\bs{u}}^{n+1}-2\epsilon_{\bs{u}}^{n}+\epsilon_{\bs{u}}^{n-1}\|_{0}^{2}+\zeta\|\epsilon_{\bs{\omega}}^{n+1}-2\epsilon_{\bs{\omega}}^{n}+\epsilon_{\bs{\omega}}^{n-1}\|_{0}^{2}
+\|\epsilon_{\theta}^{n+1}-2\epsilon_{\theta}^{n}+\epsilon_{\theta}^{n-1}\|_{0}^{2}\nn\\
&\quad+6\|\tilde{\epsilon}_{\bs{u}}^{n+1}-\epsilon_{\bs{u}}^{n+1}\|_{0}^{2}+4\eta\delta_{t}\|\nabla\cdot \epsilon_{\bs{\omega}}^{n+1}\|_{0}^{2}+16\chi\delta_{t}\|\epsilon_{\bs{\omega}}^{n+1}\|_{0}^{2}\Bigr)
\leqslant C (\delta_{t}^{2}+h^{2l+2}),
\end{align}
where $C$ is a positive number independent of $\delta_t$ and $h$.
\end{theorem}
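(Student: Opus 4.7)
The plan is to emulate the energy argument of Theorem~\ref{3.1:} at the level of the error system and then apply the discrete Gr\"onwall lemma. The first step is to subtract the scheme \refe{step1}--\refe{step5} from the projected consistency equations \refe{4.4}--\refe{4.7} (which are satisfied by the Stokes and $H^1$ projections by construction). This yields four error equations whose linear parts have exactly the structure of the scheme, with right-hand sides given by (i) the BDF-2 truncation residuals $R_{\bs{u}l}^{n+1}, R_{\bs{\omega}l}^{n+1}, R_{\theta l}^{n+1}$; (ii) projection mismatch produced by replacing the implicit argument $R_{\bs{u}h}^{n+1}$ by the extrapolation $\bar{R}_{\bs{u}h}^{n}$ in the convective coefficients; and (iii) nonlinear residuals obtained by splitting, for example, $((\bar{R}_{\bs{u}h}^{n}\cdot\nabla)R_{\bs{u}h}^{n+1},\cdot) - ((\bar{\bs{u}}_h^n\cdot\nabla)\tilde{\bs{u}}_h^{n+1},\cdot)$ into $((\bar{\epsilon}_{\bs{u}}^{n}\cdot\nabla)R_{\bs{u}h}^{n+1},\cdot)+((\bar{\bs{u}}_h^n\cdot\nabla)\tilde{\epsilon}_{\bs{u}}^{n+1},\cdot)$ type terms.

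I would then test the momentum error equation with $4\delta_t\,\tilde{\epsilon}_{\bs{u}}^{n+1}$, the angular-velocity error equation with $4\delta_t\,\epsilon_{\bs{\omega}}^{n+1}$, and the temperature error equation with $4\delta_t\,\epsilon_{\theta}^{n+1}$, exactly as in the stability proof, and apply the identity $2(3a-4b+c,a)=|a|^2-|b|^2+|2a-b|^2-|2b-c|^2+|a+2b-c|^2$ to generate the telescoping $G^{n+1}-G^{n}$ quantities and the $|a-2b+c|^2$ dissipative contributions appearing on the left of \refe{4.8}. The pressure is handled exactly as in \refe{3.3'}--\refe{3.6'} applied to the error system: squaring the projected corrector identity $\epsilon_{\bs{u}}^{n+1}+\tfrac{2\delta_t}{3}\nabla\epsilon_p^{n+1}=\tilde{\epsilon}_{\bs{u}}^{n+1}+\tfrac{2\delta_t}{3}\nabla\epsilon_p^n$ in $L^2$ and invoking the projected form of \refe{step2} produces the telescoping $\tfrac{8\delta_t^2}{3}(\|\nabla\epsilon_p^{n+1}\|_0^2-\|\nabla\epsilon_p^n\|_0^2)$ term and the $\|\tilde{\epsilon}_{\bs{u}}^{n+1}-\epsilon_{\bs{u}}^{n+1}\|_0^2$ control.

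The main obstacle is absorbing the nonlinear and coupling contributions on the right-hand side into the dissipative terms on the left. The dangerous terms in the momentum equation are of the type $b(\bar{\epsilon}_{\bs{u}}^{n},R_{\bs{u}h}^{n+1},\tilde{\epsilon}_{\bs{u}}^{n+1})$, $b(\bar{R}_{\bs{u}h}^{n}-R_{\bs{u}h}^{n+1},R_{\bs{u}h}^{n+1},\tilde{\epsilon}_{\bs{u}}^{n+1})$, and the cross-terms $(\nabla\times(\bar{\epsilon}_{\bs{\omega}}^{n}),\tilde{\epsilon}_{\bs{u}}^{n+1})$ and $(\bar{\epsilon}_{\theta}^{n}\bs{e},\tilde{\epsilon}_{\bs{u}}^{n+1})$, with analogous terms in the $\bs{\omega}$ and $\theta$ equations. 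These I would bound using the skew-symmetry of $b$, H\"older's inequality and the Sobolev embeddings stated after Lemma~\ref{2.0101'}, absorbing $\|\nabla R_{\bs{u}h}^{n+1}\|_{L^\infty}$ and $\|R_{\bs{u}h}^{n+1}\|_{L^\infty}$ into constants via Assumption (A.2) and the $L^{\infty}$ stability of the Stokes projection, and then applying Young's inequality to tuck $\|\nabla\tilde{\epsilon}_{\bs{u}}^{n+1}\|_0^2$, $\|\nabla\epsilon_{\bs{\omega}}^{n+1}\|_0^2$ and $\|\nabla\epsilon_{\theta}^{n+1}\|_0^2$ under the $(\chi+\mu)\delta_t$, $\upsilon\delta_t$ and $\kappa\delta_t$ coefficients. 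The BDF-2 truncation residuals contribute $O(\delta_t^2)$ by Taylor expansion together with the $L^2(0,T)$ bounds on $\bs{u}_{ttt},\bs{\omega}_{ttt},\theta_{ttt}$ from Assumption (A.2), while the projection-mismatch pieces contribute $O(h^{2l+2})$ through the Stokes and $H^1$-projection estimates listed above.

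Finally, summing from $n=0$ to $N$, using Assumption (A.3) to absorb the $n=0,1$ initial contributions into the $O(\delta_t^2+h^{2l+2})$ constant, and invoking Lemma~\ref{2.2'} with $a_n=\|\epsilon_{\bs{u}}^{n+1}\|_0^2+\zeta\|\epsilon_{\bs{\omega}}^{n+1}\|_0^2+\|\epsilon_{\theta}^{n+1}\|_0^2$, yields \refe{4.8}. I note that the pressure appears only with an $O(\delta_t^2)$ prefactor, so this estimate gives no useful control on $\|\nabla\epsilon_p\|_0$ as $\delta_t\to 0$; this is precisely the source of the suboptimality that the inverse-Stokes/dual-norm refinement in the next subsection is designed to remove.
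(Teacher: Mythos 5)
Your overall architecture is the same as the paper's: subtract the projected consistency equations \refe{4.4}--\refe{4.7} from \refe{step1}, \refe{step4}, \refe{step5}; test with $4\delta_t\tilde{\epsilon}_{\bs{u}}^{n+1}$, $4\delta_t\epsilon_{\bs{\omega}}^{n+1}$, $4\delta_t\epsilon_{\theta}^{n+1}$; use the BDF2 identity $2(3a-4b+c,a)=\dots$; bound the nonlinear, coupling and residual terms by H\"older/Young with the projection and regularity assumptions; and close with (A.3) and Gr\"onwall. However, your treatment of the projection step contains a concrete error. The ``projected corrector identity'' you propose to square, $\epsilon_{\bs{u}}^{n+1}+\tfrac{2\delta_t}{3}\nabla\epsilon_p^{n+1}=\tilde{\epsilon}_{\bs{u}}^{n+1}+\tfrac{2\delta_t}{3}\nabla\epsilon_p^{n}$, is false: since $R_{\bs{u}h}^{n+1}$ is the same in the definitions of $\epsilon_{\bs{u}}^{n+1}$ and $\tilde{\epsilon}_{\bs{u}}^{n+1}$, subtracting \refe{step3} gives
\begin{align*}
\frac{3(\tilde{\epsilon}_{\bs{u}}^{n+1}-\epsilon_{\bs{u}}^{n+1})}{2\delta_t}
=\nabla\epsilon_{p}^{n+1}-\nabla\epsilon_{p}^{n}-\nabla Q_{h}^{n+1}+\nabla Q_{h}^{n},
\end{align*}
i.e.\ the exact-pressure increment $\nabla(Q_h^{n+1}-Q_h^{n})$ cannot be dropped (this is exactly \refe{4.18} in the paper, which is then tested against $4\delta_t(\nabla\epsilon_p^{n}+\nabla Q_h^{n+1}-\nabla Q_h^{n})$ to produce the telescoping $\tfrac{8\delta_t^2}{3}\|\nabla\epsilon_p\|_0^2$ term, the $\|\tilde{\epsilon}_{\bs{u}}^{n+1}-\epsilon_{\bs{u}}^{n+1}\|_0^2$ control, and a cross term in $\nabla(Q_h^{n+1}-Q_h^{n})$).

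This omission is not a harmless slip, because that very term is the source of the stated bound. After Young's inequality it contributes $\tfrac{16}{3}\delta_t\sum_n\|\nabla(Q_h^{n+1}-Q_h^{n})\|_0^2\leqslant C\delta_t\cdot N\cdot\delta_t^{2}\leqslant C\delta_t^{2}$, using $\|\nabla(Q_h^{n+1}-Q_h^{n})\|_0\leqslant C\delta_t$, and this is the dominant temporal term on the right of \refe{4.8}; all other contributions (BDF2 truncation residuals $\|R_{\bs{u}l}^{n+1}\|_0\leqslant C(\delta_t^2+h^{l+1})$, squared, and the $C\delta_t^{5}$ pieces) only produce $O(\delta_t^{4}+h^{2l+2})$. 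With your identity the argument would appear to yield $C(\delta_t^{4}+h^{2l+2})$, which is not what Theorem \ref{4.1a} asserts and misattributes where the first-order-in-time loss comes from (your closing remark blames the smallness of the $\delta_t^{2}\|\nabla\epsilon_p\|_0^{2}$ prefactor, whereas the actual mechanism is the pressure-increment consistency defect in the corrector step). Restoring the $\nabla(Q_h^{n+1}-Q_h^{n})$ term, bounding it as above, and carrying the resulting $C\delta_t^{3}\|\nabla\epsilon_p^{n+1}\|_0^{2}$ cross term into the Gr\"onwall step repairs the proof and reproduces the paper's estimate.
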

\begin{proof}
Subtracting \refe{4.4}, \refe{4.6}, and \refe{4.7} from \refe{step1}, \refe{step4}, and \refe{step5}, respectively, we derive
\begin{align}
\label{4.9}
&\left(\frac{3\tilde{\epsilon}_{\bs{u}}^{n+1} - 4\epsilon_{\bs{u}}^{n} + \epsilon_{\bs{u}}^{n-1}}{2\delta_t}, \bs{v}_{h}\right)
    + (\chi + \mu)(\nabla \tilde{\epsilon}_{\bs{u}}^{n+1}, \nabla \bs{v}_{h}) 
 + \big((R_{\bs{u}h}^{n+1} \cdot \nabla) R_{\bs{u}h}^{n+1} - (\bar{\bs{u}}_{h}^{n} \cdot \nabla) \tilde{\bs{u}}_{h}^{n+1}, \bs{v}_{h}\big) \nonumber\\
    &\quad+ (\nabla Q_{h}^{n+1} - \nabla p_{h}^{n}, \bs{v}_{h}) 
= 2\chi(\nabla \times R_{\bs{\omega} h}^{n+1} - \nabla \times \bar{\bs{\omega}}_{h}^{n}, \bs{v}_{h})
    + ( R_{\theta h}^{n+1} \bs{e}- \bar{\theta}_{h}^{n}\bs{e}, \bs{v}_{h}) + (R_{ul}^{n+1}, \bs{v}_{h}), 
    \\
\label{4.10}
&\zeta\left(\frac{3\epsilon_{\bs{\omega}}^{n+1} - 4\epsilon_{\bs{\omega}}^{n} + \epsilon_{\bs{\omega}}^{n-1}}{2\delta_t}, \bs{\Lambda}_{h}\right)
    + \upsilon(\nabla \epsilon_{\bs{\omega}}^{n+1}, \nabla \bs{\Lambda}_{h})
+ \zeta\big((R_{\bs{u}h}^{n+1} \cdot \nabla) R_{\bs{\omega} h}^{n+1} - (\bar{\bs{u}}_{h}^{n} \cdot \nabla) \bs{\omega}_{h}^{n+1}, \bs{\Lambda}_{h}\big)\nonumber\\
    &\quad+ \eta(\nabla \cdot \epsilon_{\bs{\omega}}^{n+1}, \nabla \cdot \bs{\Lambda}_{h}) 
+ 4\chi(\epsilon_{\bs{\omega}}^{n+1}, \bs{\Lambda}_{h})
= 2\chi(\nabla \times R_{\bs{u} h}^{n+1} - \nabla \times \bar{\bs{u}}_{h}^{n}, \bs{\Lambda}_{h}) + (R_{\bs{\omega} l}^{n+1}, \bs{\Lambda}_{h}),
\\
\label{4.11}
&\left(\frac{3\epsilon_{\theta}^{n+1} - 4\epsilon_{\theta}^{n} + \epsilon_{\theta}^{n-1}}{2\delta_t}, \psi_{h}\right)
    + \kappa(\nabla \epsilon_{\theta}^{n+1}, \nabla \psi_{h})
+ \big((R_{\bs{u} h}^{n+1} \cdot \nabla) R_{\theta h}^{n+1} - (\bar{\bs{u}}_{h}^{n} \cdot \nabla) \theta_{h}^{n+1}, \psi_{h}\big) \nonumber \\
& \hspace{19em} = (R_{\bs{u} h}^{n+1} \cdot\bs{e} - \bar{\bs{u}}_{h}^{n} \cdot \bs{e}, \psi_{h}) + (R_{\theta l}^{n+1}, \psi_{h}). 
\end{align}
Testing \ \refe{4.9}, \refe{4.10} and \refe{4.11}\ by \ $4\delta_t \tilde{\epsilon}_{\bs{u}}^{n+1}$, $4\delta_t \epsilon_{\bs{\omega}}^{n+1}$ and $4\delta_t \epsilon_{\theta}^{n+1}$ respectively, we have
\begin{align}
\label{4.12}
&2(3\tilde{\epsilon}_{\bs{u}}^{n+1} - 4\epsilon_{\bs{u}}^{n} + \epsilon_{\bs{u}}^{n-1}, \tilde{\epsilon}_{\bs{u}}^{n+1})
  + 4(\chi+\mu)\delta_t \|\nabla \tilde{\epsilon}_{\bs{u}}^{n+1}\|_0^2
\nonumber\\
 &\quad  + 4\delta_t \big((R_{\bs{u}h}^{n+1} \cdot \nabla) R_{\bs{u}h}^{n+1} - (\bar{\bs{u}}_h^n \cdot \nabla) \tilde{\bs{u}}_h^{n+1}, \tilde{\epsilon}_{\bs{u}}^{n+1} \big) + 4\delta_t (\nabla Q_h^{n+1} - \nabla p_h^n, \tilde{\epsilon}_u^{n+1}) \nn \\
& \quad = 8\chi \delta_t (\nabla \times R_{\bs{\omega} h}^{n+1} - \nabla \times \bar{\bs{\omega}}_h^n, \tilde{\epsilon}_u^{n+1})
  + 4\delta_t ( R_{\theta h}^{n+1}\bs{e} -  \bar{\theta}_{h}^{n} \bs{e}, \tilde{\epsilon}_u^{n+1}) + 4\delta_t (R_{ul}^{n+1}, \tilde{\epsilon}_u^{n+1}),
  \\
\label{4.21}
&2\zeta(3 \epsilon_{\bs{\omega}}^{n+1} - 4 \epsilon_{\bs{\omega}}^n + \epsilon_{\bs{\omega}}^{n-1}, \epsilon_{\bs{\omega}}^{n+1})
  + 4 \upsilon \delta_t \|\nabla \epsilon_{\bs{\omega}}^{n+1}\|_0^2
 \nonumber \\
  &\quad+ 4 \delta_t \zeta \big((R_{\bs{u}h}^{n+1} \cdot \nabla) R_{\bs{\omega} h}^{n+1} - (\bar{\bs{u}}_h^n \cdot \nabla) \bs{\omega}_h^{n+1}, \epsilon_{\bs{\omega}}^{n+1}\big) + 4 \eta \delta_t \|\nabla \cdot \epsilon_{\bs{\omega}}^{n+1}\|_0^2 + 16 \chi \delta_t \|\epsilon_{\bs{\omega}}^{n+1}\|_0^2 \nn \\
& \quad = 8 \chi \delta_t (\nabla \times R_{\bs{u} h}^{n+1} - \nabla \times \bar{\bs{u}}_h^n, \epsilon_{\bs{\omega}}^{n+1}) + 4 \delta_t (R_{\bs{\omega} l}^{n+1}, \epsilon_{\bs{\omega}}^{n+1}), 
\\
\label{4.25}
&2(3 \epsilon_{\theta}^{n+1} - 4 \epsilon_{\theta}^n + \epsilon_{\theta}^{n-1}, \epsilon_{\theta}^{n+1})
  + 4 \kappa \delta_t \|\nabla \epsilon_{\theta}^{n+1}\|_0^2
+ 4 \delta_t \big((R_{\bs{u} h}^{n+1} \cdot \nabla) R_{\theta h}^{n+1} - (\bar{\bs{u}}_h^n \cdot \nabla) \theta_h^{n+1}, \epsilon_{\theta}^{n+1}\big) \nonumber \\
& \quad =\ 4 \delta_t (R_{\theta h}^{n+1} - \bar{\theta}_{h}^{n}\bs{e}, \epsilon_{\theta}^{n+1}) + 4 \delta_t (R_{\theta l}^{n+1}, \epsilon_{\theta}^{n+1}).
\end{align}
By adding and subtracting terms $\bs{u}(t^{n+1}), \nabla Q_{h}^{n+1}$ and $\nabla Q_{h}^{n}$ in \refe{step3} to obtain
\begin{align}\label{4.18}
\frac{3(\tilde{\epsilon}_{\bs{u}}^{n+1}-\epsilon_{\bs{u}}^{n+1})}{2\delta_t}=\nabla\epsilon_{p}^{n+1}-\nabla\epsilon_{p}^{n}-\nabla Q_{h}^{n+1}+\nabla Q_{h}^{n}.
\end{align}
Testing \refe{4.18} by $4\delta_t (\nabla\epsilon_{p}^{n}+\nabla Q_{h}^{n+1}-\nabla Q_{h}^{n})$ to get
\begin{align}\label{4.19'}
&6(\nabla\epsilon_{p}^{n}+\nabla Q_{h}^{n+1}-\nabla Q_{h}^{n}, \tilde{\epsilon}_{\bs{u}}^{n+1})
= 4\delta_{t}(\nabla\epsilon_{p}^{n+1}-\nabla\epsilon_{p}^{n}-\nabla Q_{h}^{n+1}+\nabla Q_{h}^{n}, \nabla\epsilon_{p}^{n+1})
-\frac{3}{2\delta_{t}}\|\tilde{\epsilon}_{\bs{u}}^{n+1}-\epsilon_{\bs{u}}^{n+1}\|_{0}^{2}.
\end{align}
Based on \refe{4.19'}, it follows that
\begin{align}\label{4.19}
&4\delta_t (\nabla\epsilon_{p}^{n}+\nabla Q_{h}^{n+1}-\nabla Q_{h}^{n}, \tilde{\epsilon}_{\bs{u}}^{n+1})= -6\|\tilde{\epsilon}_{\bs{u}}^{n+1}-\epsilon_{\bs{u}}^{n+1}\|_{0}^{2}-\frac{8\delta_{t}^{2}}{3}(\nabla Q_{h}^{n+1}-\nabla Q_{h}^{n}, \nabla\epsilon_{p}^{n+1})\nn\\ 
&\quad+\frac{4\delta_{t}^{2}}{3}(\|\nabla\epsilon_{p}^{n+1}\|_{0}^{2}-\|\nabla\epsilon_{p}^{n+1}\|_{0}^{2}+\|\nabla\epsilon_{p}^{n+1}-\nabla\epsilon_{p}^{n}\|_{0}^{2}).
\end{align}
The first term of \refe{4.12} can be rewritten as
\begin{align}\label{4.13}
&2(3\tilde{\epsilon}_{\bs{u}}^{n+1}-4\epsilon_{\bs{u}}^{n+1}+\epsilon_{\bs{u}}^{n-1}, \tilde{\epsilon}_{\bs{u}}^{n+1})
=\|\epsilon_{\bs{u}}^{n+1}\|_{0}^{2}-\|\epsilon_{\bs{u}}^{n}\|_{0}^{2}
+\|2\epsilon_{\bs{u}}^{n+1}-\epsilon_{\bs{u}}^{n}\|_{0}^{2}-\|2\epsilon_{\bs{u}}^{n}-\epsilon_{\bs{u}}^{n-1}\|_{0}^{2}\nn\\
&\quad+\|\epsilon_{\bs{u}}^{n+1}-2\epsilon_{\bs{u}}^{n}+\epsilon_{\bs{u}}^{n-1}\|_{0}^{2}+6\|\tilde{\epsilon}_{\bs{u}}^{n+1}-\epsilon_{\bs{u}}^{n+1}\|_{0}^{2}.
\end{align}
Using H\"{o}lder's inequality and Young's inequality, we get
\begin{spreadlines}{5pt}
\begin{align}
\label{4.14}
&|-4\delta_t((R_{\bs{u}h}^{n+1}\cdot\nabla)R_{\bs{u}h}^{n+1} - (\bar{\bs{u}}_{h}^{n}\cdot\nabla)\tilde{\bs{u}}_{h}^{n+1}, \tilde{\epsilon}_{\bs{u}}^{n+1})| \nn \\
& \hspace{14em} \leqslant C\delta_t\|2\epsilon_{\bs{u}}^{n}-\epsilon_{\bs{u}}^{n-1}\|_{0}^{2} + C\delta_t^{5} + (\chi+\mu)\|\nabla\tilde{\epsilon}_{\bs{u}}^{n+1}\|_{0}^{2}.
\\
\label{4.15}
&|8\chi\delta_t(\nabla\times R_{\bs{\omega} h}^{n+1}-\nabla \times \bar{\bs{\omega}}_{h}^{n}, \tilde{\epsilon}_{\bs{u}}^{n+1})| 
\leqslant C\delta_t\|2\epsilon_{\bs{\omega}}^{n}-\epsilon_{\bs{\omega}}^{n-1}\|_{0}^{2} + C\delta_t^5 + (\chi+\mu)\delta_t\|\nabla\tilde{\epsilon}_{\bs{u}}^{n+1}\|_{0}^{2},
\\
\label{4.16}
&|4\delta_t(R_{\theta h}^{n+1}\bs{e} - \bar{\theta}_{h}^{n}\bs{e}, \tilde{\epsilon}_{\bs{u}}^{n+1})|
\leqslant C\delta_t\|2\epsilon_{\theta}^{n}-\epsilon_{\theta}^{n-1}\|_{0}^{2} + C\delta_t^5 + (\chi+\mu)\delta_t\|\nabla\tilde{\epsilon}_{\bs{u}}^{n+1}\|_{0}^{2},
\\
\label{4.17}
&|4\delta_t(R_{u l}^{n+1}, \tilde{\epsilon}_{\bs{u}}^{n+1})| \leqslant C\delta_t(\delta_t^2 + h^{l+1})^2 + (\chi+\mu)\delta_t\|\nabla\tilde{\epsilon}_{\bs{u}}^{n+1}\|_{0}^{2},
\\
\label{4.20}
&\left| \frac{8\delta_t^2}{3}(\nabla Q_{h}^{n+1} - \nabla Q_{h}^{n}, \nabla\epsilon_{p}^{n+1}) \right| 
\leqslant \frac{4\delta_t}{3} \|\nabla Q_{h}^{n+1} - \nabla Q_{h}^{n}\|_0^2 + C\delta_t^3\|\nabla\epsilon_{p}^{n+1}\|_0^2,
\\
\label{4.24}
&|4\delta_t(R_{\bs{\omega} l}^{n+1}, \epsilon_{\bs{\omega}}^{n+1})| \leqslant C\delta_t\|R_{\bs{\omega} l}^{n+1}\|_0^2 + \upsilon\delta_t\|\nabla\epsilon_{\bs{\omega}}^{n+1}\|_0^2,
\\
 \label{4.22}
&|-4\delta_t \zeta((R_{\bs{u}h}^{n+1}\cdot\nabla)R_{\bs{\omega} h}^{n+1} - (\bar{\bs{u}}_{h}^{n}\cdot\nabla)\bs{\omega}_{h}^{n+1}, \epsilon_{\bs{\omega}}^{n+1})| \nn\\
&\hspace{8em} \leqslant \upsilon\delta_t\|\nabla\epsilon_{\bs{\omega}}^{n+1}\|_0^2+C\delta_t\|2\epsilon_{\bs{u}}^{n}-\epsilon_{\bs{u}}^{n-1}\|_0^2 
+ C\delta_t\|R_{\bs{u}h}^{n+1} - 2R_{\bs{u}h}^{n} + R_{\bs{u}h}^{n-1}\|_0^2, 
\\
\label{4.27}
&|4\delta_t(R_{\theta h}^{n+1} - \bar{\theta}_{h}^{n}\bs{e}, \epsilon_{\theta}^{n+1})| \nn\\
&\hspace{8em} \leqslant C\delta_t(\|2\epsilon_{\theta}^{n} - \epsilon_{\theta}^{n-1}\|_0^2 + \|R_{\theta h}^{n+1} - 2R_{\theta h}^{n} + R_{\theta h}^{n-1}\|_0^2) + \kappa\delta_t\|\nabla\epsilon_{\theta}^{n+1}\|_0^2,
\\
\label{4.23}
&|8\chi\delta_t(\nabla\times R_{\bs{u} h}^{n+1} - \nabla \times \bar{\bs{u}}_{h}^{n}, \epsilon_{\bs{\omega}}^{n+1})| \nn\\
&\hspace{8em} \leqslant C\delta_t(\|2\epsilon_{\bs{u}}^{n}-\epsilon_{\bs{u}}^{n-1}\|_0^2 + \|R_{\bs{u}h}^{n+1} - 2R_{\bs{u}h}^{n} + R_{\bs{u}h}^{n-1}\|_0^2)
+ \upsilon\delta_t\|\nabla\epsilon_{\bs{\omega}}^{n+1}\|_0^2,
\\
\label{4.26}
&|4\delta_t((R_{\bs{u}h}^{n+1}\cdot\nabla)R_{\theta h}^{n+1} - (\bar{\bs{u}}_{h}^{n}\cdot\nabla)\theta_{h}^{n+1}, \epsilon_{\theta}^{n+1})| \nn\\
&\hspace{8em}  \leqslant C\delta_t(\|2\epsilon_{\bs{u}}^{n}-\epsilon_{\bs{u}}^{n-1}\|_0^2 + \|R_{\bs{u}h}^{n+1} - 2R_{\bs{u}h}^{n} + R_{\bs{u}h}^{n-1}\|_0^2) 
+ \kappa\delta_t\|\nabla\epsilon_{\theta}^{n+1}\|_0^2,
\\
\label{4.28}
&|4\delta_t(R_{\theta l}^{n+1}, \epsilon_{\theta}^{n+1})| \leqslant C\delta_t\|R_{\theta l}^{n+1}\|_0^2 + \kappa\delta_t\|\nabla\epsilon_{\theta}^{n+1}\|_0^2.
\end{align}
\end{spreadlines}
Combining \refe{4.12}-\refe{4.28} and summing up $n=1$ to $N$, we derive
   \begin{align}\label{4.29}
   &\|\epsilon_{\bs{u}}^{N+1}\|_{0}^{2}+\|2\epsilon_{\bs{u}}^{N+1}-\epsilon_{\bs{u}}^{N}\|_{0}^{2}+\zeta\|\epsilon_{\bs{\omega}}^{N+1}\|_{0}^{2}+\zeta\|2\epsilon_{\bs{\omega}}^{N+1}-\epsilon_{\bs{\omega}}^{N-1}\|_{0}^{2}
+\zeta\|\epsilon_{\theta}^{N+1}\|_{0}^{2}+\|2\epsilon_{\theta}^{N+1}-\epsilon_{\theta}^{N-1}\|_{0}^{2}\nn\\
&\quad+\delta_{t}\sum_{n=0}^{N}((\chi+\mu)\|\nabla\tilde{\epsilon}_{\bs{u}}^{n+1}\|_{0}^{2}+\upsilon\|\nabla\epsilon_{\bs{\omega}}^{n+1}\|_{0}^{2}+\kappa\|\nabla\epsilon_{\theta}^{n+1}\|_{0}^{2})+\sum_{n=0}^{N}(\|\epsilon_{\bs{u}}^{n+1}-2\epsilon_{\bs{u}}^{n}+\epsilon_{\bs{u}}^{n-1}\|_{0}^{2}
\nn\\
&\quad+\|\epsilon_{\bs{\omega}}^{n+1}-2\epsilon_{\bs{\omega}}^{n}+\epsilon_{\bs{\omega}}^{n-1}\|_{0}^{2}
+\|\epsilon_{\theta}^{n+1}-2\epsilon_{\theta}^{n}+\epsilon_{\theta}^{n-1}\|_{0}^{2}+6\|\tilde{\epsilon}_{\bs{u}}^{n+1}-\epsilon_{\bs{u}}^{n+1}\|_{0}^{2})+\frac{8\delta_{t}^{2}}{3}\|\nabla\epsilon_{p}^{N+1}\|_{0}^{2}\nn\\
& \leqslant \|\epsilon_{\bs{u}}^{0}\|_{0}^{2}+\|2\epsilon_{\bs{u}}^{1}-\epsilon_{\bs{u}}^{0}\|_{0}^{2}+\zeta\|\epsilon_{\bs{\omega}}^{0}\|_{0}^{2}+\zeta\|2\epsilon_{\bs{\omega}}^{1}-\epsilon_{\bs{\omega}}^{0}\|_{0}^{2}
+\|\epsilon_{\theta}^{1}\|_{0}^{2}+\|2\epsilon_{\theta}^{1}-\epsilon_{\theta}^{0}\|_{0}^{2}\nn\\
&\quad+C(\sum_{n=1}^{N+1}\|2\epsilon_{\bs{u}}^{n}-\epsilon_{\bs{u}}^{n-1}\|_{0}^{2}+\sum_{n=1}^{N+1}\zeta\|2\epsilon_{\bs{\omega}}^{n}-\epsilon_{\bs{\omega}}^{n-1}\|_{0}^{2}+\sum_{n=1}^{N+1}\|2\epsilon_{\theta}^{n}-\epsilon_{\theta}^{n-1}\|_{0}^{2}+\sum_{n=1}^{N+1}\|R_{\bs{u} l}^{n+1}\|_{0}^{2}\nn\\
&\quad+\sum_{n=1}^{N+1}\|R_{\bs{\omega} l}^{n+1}\|_{0}^{2}+\sum_{n=1}^{N+1}\|R_{\theta l}^{n+1}\|_{0}^{2}+\sum_{n=1}^{N+1}\delta_t\|R_{\theta h}^{n+1}-2R_{\theta h}^{n}+R_{\theta h}^{n-1}\|_{0}^{2}+\delta_t\|R_{u h}^{n+1}-2R_{u h}^{n}+R_{u h}^{n-1}\|_{0}^{2}\nn\\
&\quad+\delta_t\|R_{\bs{\omega} h}^{n+1}-2R_{\bs{\omega} h}^{n}+R_{\bs{\omega} h}^{n-1}\|_{0}^{2}+\frac{16}{3}\delta_t\sum_{n=1}^{N+1}\|\nabla Q_{h}^{n+1}-\nabla Q_{h}^{n}\|_{0}^{2}+\delta_{t}^{2}\sum_{n=1}^{N+1}\|\nabla\epsilon_{p}^{n+1}\|_{0}^{2}),
   \end{align}
where we have used the following estimations
\cite{GuermondQuartapelle1998, LinCheZey2025, Shen1994}
\begin{alignat*}{2}
&\|\nabla (Q_{h}^{n+1} - Q_{h}^{n})\|_{0}\leqslant C\delta_{t}, \quad
\|R_{\bs{u}h}^{n+1} - 2R_{\bs{u}h}^{n} + R_{\bs{u}h}^{n-1}\|_{0} \leqslant C\delta_{t}^{2}, \quad
\|R_{\bs{\omega} h}^{n+1} - 2R_{\bs{\omega} h}^{n} + R_{\bs{\omega} h}^{n-1}\|_{0} \leqslant C\delta_{t}^{2}, \\
&\|R_{\theta h}^{n+1} - 2R_{\theta h}^{n} + R_{\theta h}^{n-1}\|_{0} \leqslant C\delta_{t}^{2}, \quad
\|R_{u l}^{n+1}\|_{0} + \|R_{\bs{\omega} l}^{n+1}\|_{0} + \|R_{\theta l}^{n+1}\|_{0} \leqslant C(\delta_{t}^{2} + h^{l+1}).
\end{alignat*}
By Assumption(A.3) and Gr\"{o}nwall's lemma, the proof is completed.
\end{proof}

Suboptimal error estimates indicate that the pressure term causes the loss of optimal convergence, thereby reducing the overall accuracy. To address this issue, the dual-norm technique is introduced. The appropriate operators and associated dual norms are then defined to recover the optimal error estimate and eliminate pressure-induced order reduction.

\subsection{The optimal error estimate} \label{sec:opt_err}
We define the inverse Stokes operator $\mathcal{A}^{-1}$~\cite{SiWangWang2022, GuermondShen2003, Shen1994} as follows.
\begin{align*}\label{2.16'}
\mathcal{A}^{-1}: V_{1}^{-1}(\Omega)^{d} \rightarrow V, \quad 
\forall \bs{v} \in V_{1}^{-1}(\Omega)^{d}, \ \mathcal{A}^{-1}(v) \in V,
\end{align*}
which satisfies the following problem:
\begin{equation*}\label{2.17}
\left\{
\begin{array}{lll}
(\nabla \mathcal{A}^{-1}(\bs{v}), \nabla \bs{w}) - (r, \nabla \!\cdot\! \bs{w}) = (\bs{v}, \bs{w}), & \forall \bs{w} \in H_{0}^{1}(\Omega)^{d},\\[3pt]
(r, \nabla\cdot\mathcal{A}^{-1}(\bs{v})) = 0, & \forall q \in L_{0}^{2}(\Omega),
\end{array}
\right.
\end{equation*}
where $(\cdot, \cdot)$ denotes the duality pairing between $H^{-1}(\Omega)^{d}$ and $H^{1}(\Omega)^{d}$. 
It follows that
\begin{align}\label{2.17'}
\|\mathcal{A}^{-1}(\bs{v})\|_1 + \|\nabla r\|_{0} \le C \|\bs{v}\|_{-1}, 
\quad \forall v \in H^{-1}(\Omega)^{d}.
\end{align}
If the boundary $\partial\Omega$ is sufficiently smooth, the following regularity estimate holds~\cite{Temam1984}:
\begin{align}\label{2.18'}
\|\mathcal{A}^{-1}(\bs{v})\|_2 + \|\nabla r\|_{0} \le C \|\bs{v}\|_{0}, 
\quad \forall \bs{v} \in L^{2}(\Omega)^{d}.
\end{align}
The bilinear form~\cite{GuermondShen2003}
\[
H^{-1}(\Omega)^{d} \times H^{-1}(\Omega)^{d} 
\ni (\bs{v}, \bs{w}) \mapsto (\mathcal{A}^{-1}(\bs{v}), \bs{w}) 
:= (\nabla \mathcal{A}^{-1}(\bs{v}), \nabla \mathcal{A}^{-1}(\bs{w})) \in \mathbb{R}
\]
induces a semi-norm in $H^{-1}(\Omega)^{d}$, denoted by $|\cdot|_{*}$, defined as
\begin{align}\label{2.19'}
|v|_{*} = \|\nabla \mathcal{A}^{-1}(v)\|_{0} 
\le C \|v\|_{-1}, 
\quad \forall v \in H^{-1}(\Omega)^{d}.
\end{align}
Similarly, the inverse elliptic operator $\tilde{\mathcal{A}}^{-1}$~\cite{HeywoodRannacher1982} 
is defined as $\tilde{\mathcal{A}}^{-1}: H^{-1}(\Omega)^{d} \rightarrow \bs{X}$, 
where for any $\bs{w} \in H^{-1}(\Omega)^{d}$, the function 
$\tilde{\mathcal{A}}^{-1}(\bs{w}) \in \bs{X}$ satisfies
\begin{align}\label{2.20'}
(\nabla \tilde{\mathcal{A}}^{-1}(\bs{w}), \nabla \bs{v}) = (\bs{w}, \bs{v}), 
\quad \forall \bs{v} \in H_{0}^{1}(\Omega)^{d}.
\end{align}
Then
\begin{align}\label{2.21'}
\|\tilde{\mathcal{A}}^{-1}(\bs{w})\|_1 \le C \|\bs{w}\|_{-1}, 
\quad \forall \bs{w} \in H^{-1}(\Omega)^{d}.
\end{align}
If the domain $\Omega$ is sufficiently smooth, the regularity property will be activated
\begin{align}\label{2.22'}
\|\tilde{\mathcal{A}}^{-1}(\bs{w})\|_2 \le C \|\bs{w}\|_{0}, 
\quad \forall \bs{w} \in L^{2}(\Omega)^{d}
\end{align}
holds. 
Following~\cite{GuermondShen2003}, the bilinear form
\[
H^{-1}(\Omega)^{d} \times H^{-1}(\Omega)^{d} 
\ni (\bs{v}, \bs{w}) \mapsto (\tilde{\mathcal{A}}^{-1}(\bs{v}), \bs{w}) 
:= (\nabla \tilde{\mathcal{A}}^{-1}(\bs{v}), \nabla \tilde{\mathcal{A}}^{-1}(\bs{w})) \in \mathbb{R}
\]
induces a semi-norm on $H^{-1}(\Omega)^{d}$, denoted by $|\cdot|_{*}$, satisfying
\begin{align}\label{w*}
|\bs{w}|_{*} = \|\nabla \tilde{\mathcal{A}}^{-1}(\bs{w})\|_{0} 
\le C \|\bs{w}\|_{-1}, 
\quad \forall \bs{w} \in H^{-1}(\Omega)^{d}.
\end{align}

\begin{theorem}\label{4.48} 
Suppose that Assumptions (A.1)-(A.3) are satisfied. The following optimal error estimates hold
 \begin{align}\label{4.49}
   &|\epsilon_{\bs{u}}^{N+1}|_{*}^{2}+|2\epsilon_{\bs{u}}^{N+1}-\epsilon_{\bs{u}}^{N}|_{*}^{2}+\zeta|\epsilon_{\bs{\omega}}^{N+1}|_{*}^{2}+\zeta|2\epsilon_{\bs{\omega}}^{N+1}-\epsilon_{\bs{\omega}}^{N-1}|_{*}^{2}
+|\epsilon_{\theta}^{N+1}|_{*}^{2}+|2\epsilon_{\theta}^{N+1}-\epsilon_{\theta}^{N-1}|_{*}^{2}\nn\\
&\quad+\delta_{t}\sum_{n=0}^{N}(\frac{\chi+\mu}{2}\|\epsilon_{\bs{u}}^{n+1}\|_{0}^{2}+4\upsilon\|\epsilon_{\bs{\omega}}^{n+1}\|_{0}^{2}+4\eta\|\epsilon_{\bs{\omega}}^{n+1}\|_{0}^{2}+16\chi\|\epsilon_{\bs{\omega}}^{n+1}\|_{0}^{2} +\frac{\kappa}{2}\|\epsilon_{\theta}^{n+1}\|_{0}^{2} \nn\\
&\quad+|\epsilon_{\bs{u}}^{n+1}-2\epsilon_{\bs{u}}^{n} +\epsilon_{\bs{u}}^{n-1}|_{*}^{2}+\zeta|\epsilon_{\bs{\omega}}^{n+1}-2\epsilon_{\bs{\omega}}^{n}+\epsilon_{\bs{\omega}}^{n-1}|_{*}^{2}
+|\epsilon_{\theta}^{n+1}-2\epsilon_{\theta}^{n}+\epsilon_{\theta}^{n-1}|_{*}^{2})\leqslant C(\delta_{t}^{2}+h^{l+1})^{2}.
\end{align}
\end{theorem}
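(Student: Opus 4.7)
\medskip
\noindent\textbf{Proof plan for Theorem \ref{4.48}.} The plan is to rerun the energy argument of Theorem \ref{4.1a}, but test the error equations \refe{4.9}--\refe{4.11} against inverse-operator-applied errors rather than the errors themselves, so that each unit of $H^{-1}$-regularity gained on the test side recovers the half-order that was lost to the pressure gradient in the suboptimal estimate. Concretely, I would take as test functions $4\delta_t \tilde{\mathcal{A}}^{-1}(\tilde{\epsilon}_{\bs{u}}^{n+1})$ in \refe{4.9}, $4\delta_t \tilde{\mathcal{A}}^{-1}(\epsilon_{\bs{\omega}}^{n+1})$ in \refe{4.10} and $4\delta_t \tilde{\mathcal{A}}^{-1}(\epsilon_{\theta}^{n+1})$ in \refe{4.11}, which are admissible thanks to the $H^1$-mapping property \refe{2.21'} and the symmetry of the bilinear form inducing $|\cdot|_{*}$.

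The BDF2 time-derivative terms then telescope in the negative semi-norm: using the defining identity of $\tilde{\mathcal{A}}^{-1}$ together with the algebraic identity
\begin{equation*}
2(3a-4b+c,\tilde{\mathcal{A}}^{-1}(a)) \;=\; |a|_{*}^{2}-|b|_{*}^{2}+|2a-b|_{*}^{2}-|2b-c|_{*}^{2}+|a-2b+c|_{*}^{2},
\end{equation*}
one obtains exactly the telescoping combination of $|\epsilon_{\bs{u}}^{n+1}|_{*}^{2}$, $|2\epsilon_{\bs{u}}^{n+1}-\epsilon_{\bs{u}}^{n}|_{*}^{2}$, and their $\bs{\omega}$- and $\theta$-analogues appearing on the left of \refe{4.49}. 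The diffusion terms collapse cleanly: $(\nabla\epsilon,\nabla\tilde{\mathcal{A}}^{-1}(\epsilon))=\|\epsilon\|_0^2$, delivering the $L^2$-dissipation bounds $\tfrac{\chi+\mu}{2}\|\epsilon_{\bs{u}}^{n+1}\|_{0}^{2}$, $4\upsilon\|\epsilon_{\bs{\omega}}^{n+1}\|_{0}^{2}$ and $\tfrac{\kappa}{2}\|\epsilon_{\theta}^{n+1}\|_{0}^{2}$. The coupling and nonlinear terms (curl, buoyancy, $\bs{u}\cdot\bs{e}$, and the trilinear convective terms) are controlled by Hölder, \refe{2.19'}, and Young's inequality, absorbing the highest-order pieces into the newly available $L^2$-norms of the errors and pushing the remainder into $C\delta_t$ times the suboptimal quantities already bounded by $C(\delta_t^{2}+h^{2l+2})$ in Theorem \ref{4.1a}.

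The hard part is the pressure term $4\delta_t(\nabla Q_{h}^{n+1}-\nabla p_{h}^{n},\tilde{\mathcal{A}}^{-1}(\tilde{\epsilon}_{\bs{u}}^{n+1}))$, because $\tilde{\mathcal{A}}^{-1}$ produces an $H^1$-function that is \emph{not} discretely divergence-free, so the gradient does not annihilate by parts. The strategy here is to substitute relation \refe{4.18}, which expresses $\tilde{\epsilon}_{\bs{u}}^{n+1}-\epsilon_{\bs{u}}^{n+1}$ as a gradient of the pressure-error increments, and then to mimic the manipulation \refe{4.19'}--\refe{4.19}: test \refe{4.18} with $4\delta_t\tilde{\mathcal{A}}^{-1}$ of the appropriate gradient and invoke $(\nabla r,\tilde{\mathcal{A}}^{-1}(\bs{v}))=-(r,\nabla\!\cdot\!\tilde{\mathcal{A}}^{-1}(\bs{v}))$ together with the regularity \refe{2.22'} to trade the non-divergence-free pressure contribution for a term proportional to $\delta_t^{2}\|\nabla(Q_h^{n+1}-Q_h^{n})\|_{0}^{2}\leqslant C\delta_t^{4}$ plus controllable jumps. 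This is where the half-order gain is extracted.

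Finally, the consistency residuals $R_{\bs{u}l}^{n+1},R_{\bs{\omega}l}^{n+1},R_{\theta l}^{n+1}$ are now measured in the $|\cdot|_{*}$ (equivalently $H^{-1}$) norm, which is one power of $h$ sharper than the $L^{2}$ bounds used in Theorem \ref{4.1a} and preserves the full $O(\delta_t^{2})$ in time, yielding the desired right-hand side $C(\delta_t^{2}+h^{l+1})^{2}$. Summing from $n=1$ to $N$, absorbing dissipation terms on the left, and invoking the discrete Gr\"onwall inequality of Lemma \ref{2.2'} completes the proof.
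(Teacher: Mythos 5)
Your overall skeleton does match the paper's strategy: a negative-norm energy argument, a BDF2 telescoping identity in the $|\cdot|_{*}$ semi-norm, diffusion terms collapsing to $L^2$ control via the defining property of the inverse operator, and nonlinear/coupling terms absorbed using the suboptimal bounds of Theorem \ref{4.1a}. However, there is a genuine gap in the one place you yourself flag as "the hard part": the velocity equation. The paper does \emph{not} test \refe{4.9} with the inverse elliptic operator $\tilde{\mathcal{A}}^{-1}$; it tests with the inverse \emph{Stokes} operator $\mathcal{A}^{-1}$, whose range lies in $\bs{V}$ (divergence-free with zero trace). This choice is not cosmetic: it makes the pressure term $4\delta_t(\nabla Q_{h}^{n+1}-\nabla p_{h}^{n},\mathcal{A}^{-1}\tilde{\epsilon}_{\bs{u}}^{n+1})$ vanish identically, since any gradient is $L^2$-orthogonal to a divergence-free field with vanishing trace, and it simultaneously gives $(\tilde{\epsilon}_{\bs{u}}^{n+1}-\epsilon_{\bs{u}}^{n+1},\mathcal{A}^{-1}\cdot)=0$ because $\tilde{\epsilon}_{\bs{u}}^{n+1}-\epsilon_{\bs{u}}^{n+1}$ is itself a gradient by \refe{4.18}; this is exactly what converts the BDF2 term into the clean telescoping identity \refe{4.51} in $\epsilon_{\bs{u}}$ rather than $\tilde{\epsilon}_{\bs{u}}$ and removes the pressure from the optimal estimate altogether.

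With $\tilde{\mathcal{A}}^{-1}$ instead, the proposed fix does not close. Writing $\nabla Q_h^{n+1}-\nabla p_h^{n}=\nabla\epsilon_p^{n}+\nabla(Q_h^{n+1}-Q_h^{n})$, your argument only disposes of the increment piece (which is indeed $O(\delta_t)$ in $H^1$); the dominant piece is $4\delta_t(\nabla\epsilon_p^{n},\tilde{\mathcal{A}}^{-1}\tilde{\epsilon}_{\bs{u}}^{n+1})$. Integration by parts turns it into $-4\delta_t(\epsilon_p^{n},\nabla\cdot\tilde{\mathcal{A}}^{-1}\tilde{\epsilon}_{\bs{u}}^{n+1})$, and $\nabla\cdot\tilde{\mathcal{A}}^{-1}\tilde{\epsilon}_{\bs{u}}^{n+1}$ neither vanishes nor is small; moreover Theorem \ref{4.1a} only yields $\delta_t^{2}\|\nabla\epsilon_p^{n}\|_0^{2}\leqslant C(\delta_t^{2}+h^{2l+2})$, i.e.\ $\|\nabla\epsilon_p^{n}\|_0$ (and a fortiori $\|\epsilon_p^{n}\|_0$) is merely bounded, not higher-order small. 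After Young's inequality this term contributes $C\delta_t\|\epsilon_p^{n}\|_0^{2}=O(\delta_t)$ per step, which sums to $O(1)$ and destroys the $(\delta_t^{2}+h^{l+1})^{2}$ budget; the exact-square cancellation of \refe{4.19'}--\refe{4.19} is also unavailable once $\tilde{\mathcal{A}}^{-1}$ is inserted, because the test function is no longer the quantity appearing in \refe{4.18}. The repair is simply to adopt the paper's choice of $\mathcal{A}^{-1}$ for the momentum equation (keeping $\tilde{\mathcal{A}}^{-1}$ for $\bs{\omega}$ and $\theta$, as you and the paper both do). A minor additional remark: the gain over Theorem \ref{4.1a} is in the temporal order (recovering full $O(\delta_t^{2})$ lost to the pressure), not "one power of $h$" on the consistency residuals, which are still estimated in $L^2$ by $C(\delta_t^{2}+h^{l+1})$.
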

\begin{proof}

Testing \refe{4.9}, \refe{4.10} and \refe{4.11} by $4\delta_t \mathcal{A}^{-1}\tilde{\epsilon}_{\bs{u}}^{n+1}$, $4\delta_{t}\tilde{\mathcal{A}}^{-1}\epsilon_{\bs{\omega}}^{n+1}$, and $4\delta_{t}\tilde{\mathcal{A}}^{-1}\epsilon_{\theta}^{n+1}$, respectively, we have
\begin{align}
\label{4.50}
&2(3\tilde{\epsilon}_{\bs{u}}^{n+1}-4\epsilon_{\bs{u}}^{n}+\epsilon_{\bs{u}}^{n-1}, \mathcal{A}^{-1}\tilde{\epsilon}_{\bs{u}\bs{u}}^{n+1})  +4(\chi+\mu)\delta_t(\nabla\tilde{\epsilon}_{\bs{u}}^{n+1},\nabla\mathcal{A}^{-1}\tilde{\epsilon}_{\bs{u}}^{n+1}) +4\delta_t((R_{\bs{u}h}^{n+1}\cdot\nabla)R_{\bs{u} h}^{n+1} \nonumber\\
&\quad-(\bar{\bs{u}}_{h}^{n}\cdot\nabla)\tilde{\bs{u}}_{h}^{n+1},\mathcal{A}^{-1}\tilde{\epsilon}_{\bs{u}}^{n+1})
+4\delta_t(\nabla Q_{h}^{n+1}-\nabla p_{h}^{n}, \mathcal{A}^{-1}\tilde{\epsilon}_{\bs{u}}^{n+1})\nonumber\\
    & = 4\chi\delta_t(\nabla\times R_{\bs{\omega} h}^{n+1}-\nabla \times \bar{\bs{\omega}}_{h}^{n},\mathcal{A}^{-1}\tilde{\epsilon}_{\bs{u}}^{n+1})+4\delta_t(R_{\theta h}^{n+1}\bs{e}-\bar{\theta}_{h}^{n}\bs{e}, \mathcal{A}^{-1}\tilde{\epsilon}_{\bs{u}}^{n+1})
    +4\delta_t(R_{\bs{u} l}^{n+1}, \mathcal{A}^{-1}\tilde{\epsilon}_{\bs{u}}^{n+1}),
\\
\label{4.57}
&\zeta|\epsilon_{\bs{\omega}}^{n+1}|_{*}-\zeta|\epsilon_{\bs{\omega}}^{n}|_{*}+\zeta|2\epsilon_{\bs{\omega}}^{n+1}-\epsilon_{\bs{\omega}}^{n}|_{*}-\zeta|2\epsilon_{\bs{\omega}}^{n}-\epsilon_{\bs{\omega}}^{n-1}|_{*}+\zeta|\epsilon_{\bs{\omega}}^{n+1}-2\epsilon_{\bs{\omega}}^{n}+\epsilon_{\bs{\omega}}^{n-1}|_{*}
    +4\upsilon\delta_{t}\|\epsilon_{\bs{\omega}}^{n+1}\|_{0}^{2}\nn\\
    &\quad+4\delta_{t}\zeta((R_{\bs{u}h}^{n+1}\cdot\nabla)R_{\bs{\omega} h}^{n+1}-(\bar{\bs{u}}_{h}^{n}\cdot\nabla)\bs{\omega}_{h}^{n+1}, \tilde{\mathcal{A}}^{-1}\epsilon_{\bs{\omega}}^{n+1})+4\eta\delta_{t}\|\epsilon_{\bs{\omega}}^{n+1}\|_{0}^{2}+16\chi\delta_{t}\|\epsilon_{\bs{\omega}}^{n+1}\|_{0}^{2}\nn\\
    & \quad = 8\chi\delta_{t}(\nabla\times R_{\bs{u} h}^{n+1}-\nabla \times \bar{\bs{u}}_{h}^{n}, \tilde{\mathcal{A}}^{-1}\epsilon_{\bs{\omega}}^{n+1})+4\delta_{t}(R_{\bs{\omega} l}^{n+1}, \tilde{\mathcal{A}}^{-1}\epsilon_{\bs{\omega}}^{n+1}),
\\
\label{4.61}
    &|\epsilon_{\theta}^{n+1}|_{*}-|\epsilon_{\theta}^{n}|_{*}+|2\epsilon_{\theta}^{n+1}-\epsilon_{\theta}^{n}|_{*}-|2\epsilon_{\theta}^{n}-\epsilon_{\theta}^{n-1}|_{*}+|\epsilon_{\theta}^{n+1}-2\epsilon_{\theta}^{n}+\epsilon_{\theta}^{n-1}|_{*}
    +4\kappa\delta_{t}\|\epsilon_{\theta}^{n+1}\|_{0}^{2}\nn\\
    & \quad =-4\delta_{t}((R_{\bs{u}h}^{n+1}\cdot\nabla)R_{\theta h}^{n+1}-(\bar{\bs{u}}_{h}^{n}\cdot\nabla)\theta_{h}^{n+1}, \tilde{\mathcal{A}}^{-1}\epsilon_{\theta}^{n+1}) +4\delta_{t}(R_{\theta h}^{n+1}-\bar{\theta}_{h}^{n}\bs{e}, \tilde{\mathcal{A}}^{-1}\epsilon_{\theta}^{n+1})\nn \\
    & \qquad 
    +4\delta_{t}(R_{\theta l}^{n+1}, \tilde{\mathcal{A}}^{-1}\epsilon_{\theta}^{n+1}).
\end{align}
The combination of \refe{step2}, \refe{2.17} and \refe{2.19'} yields
\begin{align}
\label{4.51}
&2(3\tilde{\epsilon}_{\bs{u}}^{n+1}-4\epsilon_{\bs{u}}^{n}+\epsilon_{\bs{u}}^{n-1}, \mathcal{A}^{-1}\tilde{\epsilon}_{\bs{u}}^{n+1})=2(3\epsilon_{\bs{u}}^{n+1}-4\epsilon_{\bs{u}}^{n}+\epsilon_{\bs{u}}^{n-1}, \mathcal{A}^{-1}\tilde{\epsilon}_{\bs{u}}^{n+1})\nn\\
& \quad =|\epsilon_{\bs{u}}^{n+1}|_{*}^{2}-|\epsilon_{\bs{u}}^{n}|_{*}^{2}+|2\epsilon_{\bs{u}}^{n+1}-\epsilon_{\bs{u}}^{n}|_{*}^{2}-|2\epsilon_{\bs{u}}^{n}-\epsilon_{\bs{u}}^{n-1}|_{*}^{2}+|\epsilon_{\bs{u}}^{n+1}-2\epsilon_{\bs{u}}^{n}+\epsilon_{\bs{u}}^{n-1}|_{*}^{2}.
\\
\label{4.52}
&4(\chi+\mu)\delta_t(\nabla\tilde{\epsilon}_{\bs{u}}^{n+1},\nabla\mathcal{A}^{-1}\tilde{\epsilon}_{\bs{u}}^{n+1})=4(\chi+\mu)\delta_t(\tilde{\epsilon}_{\bs{u}}^{n+1},\tilde{\epsilon}_{\bs{u}}^{n+1})=4(\chi+\mu)\delta_t\|\tilde{\epsilon}_{\bs{u}}^{n+1}\|_{0}^{2}.
\end{align}
Using H\"{o}lder's inequality, Young's inequality, \refe{2.18'}, \refe{2.19'}, \refe{2.22'} and \refe{w*}, we get
\begin{align}\label{4.54}
&|8\chi\delta_{t}(\nabla\times R_{\bs{\omega} h}^{n+1}-\nabla \times \bar{\bs{\omega}}_{h}^{n}, \mathcal{A}^{-1}\tilde{\epsilon}_{\bs{u}}^{n+1})|\nn\\
&\quad=|8\chi\delta_{t}(\nabla\times \bar{R}_{\bs{\omega} h}^{n}-\nabla \times \bar{\bs{\omega}}_{h}^{n}, \mathcal{A}^{-1}\tilde{\epsilon}_{\bs{u}}^{n+1})+8\chi\delta_{t}(\nabla\times (R_{\bs{\omega} h}^{n+1}-2R_{\bs{\omega} h}^{n}+R_{\bs{\omega} h}^{n-1}), \mathcal{A}^{-1}\tilde{\epsilon}_{\bs{u}}^{n+1})|\nn\\
&\quad\leqslant C\delta_{t}|2\epsilon_{\bs{\omega}}^{n}-\epsilon_{\bs{\omega}}^{n-1}|_{*}\|\nabla\mathcal{A}^{-1}\tilde{\epsilon}_{\bs{u}}^{n+1}\|_{1}+C\delta_{t}\|R_{\bs{\omega} h}^{n+1}-2R_{\bs{\omega} h}^{n}+R_{\bs{\omega} h}^{n-1}\|_{0}\|\nabla\mathcal{A}^{-1}\tilde{\epsilon}_{\bs{u}}^{n+1}\|_{0}\nn\\
&\quad\leqslant C\delta_t|\bar{\epsilon}_{\bs{\omega}}^{n}|_{*}^{2}+C\delta_{t}^{5}+\frac{\chi+\mu}{2}\delta_t\|\tilde{\epsilon}_{\bs{u}}^{n+1}\|_{0}^{2},
\\
\label{4.56}
&|4\delta_t(R_{\bs{u} l}^{n+1}, \mathcal{A}^{-1}\epsilon_{\bs{u}}^{n+1})|\leqslant 4\delta_t\|R_{\bs{u} l}^{n+1}\|_{0}\|\mathcal{A}^{-1}\tilde{\epsilon}_{\bs{u}}^{n+1}\|_{0}\nn \\
& \quad \leqslant C\delta_t\|R_{\bs{u} l}^{n+1}\|_{0}^{2}+\frac{(\chi+\mu)}{2}\delta_t\|\nabla\mathcal{A}^{-1}\tilde{\epsilon}_{\bs{u}}^{n+1}\|_{0}^{2}
\leqslant C\delta_t(\delta_t^{2}+h^{l+1})^{2}+\frac{(\chi+\mu)}{2}\delta_t\|\tilde{\epsilon}_{\bs{u}}^{n+1}\|_{0}^{2},
\\
\label{4.55}
&|4\delta_t(R_{\theta h}^{n+1}\bs{e}-\bar{\theta}_{h}^{n}\bs{e}, \mathcal{A}^{-1}\tilde{\epsilon}_{\bs{u}}^{n+1})|
=|4\delta_t(\bar{R}_{\theta h}^{n}\bs{e}-\bar{\theta}_{h}^{n}\bs{e}, \mathcal{A}^{-1}\tilde{\epsilon}_{\bs{u}}^{n+1})
+4\delta_t((R_{\theta h}^{n+1}-2R_{\theta h}^{n}+R_{\theta h}^{n-1})\bs{e}, \mathcal{A}^{-1}\tilde{\epsilon}_{\bs{u}}^{n+1})|\nn\\
& \quad \leqslant 4\delta_t|\bar{\epsilon}_{\theta}^{n}|_{*}\|\nabla\mathcal{A}^{-1}\epsilon_{\bs{u}}^{n+1}\|_{0}+4\delta_t\|R_{\bs{\omega} h}^{n+1}-2R_{\bs{\omega} h}^{n}+R_{\bs{\omega} h}^{n-1}\|_{0}\|\nabla\mathcal{A}^{-1}\tilde{\epsilon}_{\bs{u}}^{n+1}\|_{0}\nn\\
& \quad \leqslant C\delta_t|\bar{\epsilon}_{\theta}^{n}|_{*}^{2}+C\delta_t^{5}+\frac{\chi+\mu}{2}\delta_t\|\tilde{\epsilon}_{\bs{u}}^{n+1}\|_{0}^{2},
\end{align}
\begin{align}
\label{4.53}
&|-4\delta_t((R_{\bs{u}h}^{n+1}\cdot\nabla)R_{\bs{u} h}^{n+1}-(\bar{\bs{u}}_{h}^{n}\cdot\nabla)\tilde{\bs{u}}_{h}^{n+1},\mathcal{A}^{-1}\tilde{\epsilon}_{\bs{u}}^{n+1})|\nn\\
& \quad =| 4\delta_t(((2\epsilon_{\bs{u}}^{n}-\epsilon_{\bs{u}}^{n-1})\cdot\nabla)R_{\bs{u} h}^{n+1},\mathcal{A}^{-1}\tilde{\epsilon}_{\bs{u}}^{n+1})+4\delta_t(((R_{\bs{u}h}^{n+1}-2R_{\bs{u}h}^{n}+R_{\bs{u}h}^{n-1})\cdot\nabla)R_{\bs{u} h}^{n+1},\mathcal{A}^{-1}\tilde{\epsilon}_{\bs{u}}^{n+1})\nn\\
& \qquad+4\delta_t(\bar{\bs{u}}_{h}^{n}\cdot\nabla)\tilde{\epsilon}_{\bs{u}}^{n+1},\mathcal{A}^{-1}\tilde{\epsilon}_{\bs{u}}^{n+1})+4\delta_t(((2\epsilon_{\bs{u}}^{n}-\epsilon_{\bs{u}}^{n-1})\cdot\nabla)\tilde{\epsilon}_{\bs{u} }^{n+1},\mathcal{A}^{-1}\tilde{\epsilon}_{\bs{u}}^{n+1})|\nn\\
& \qquad+4\delta_t(((R_{\bs{u}h}^{n+1}-2R_{\bs{u}h}^{n}+R_{\bs{u}h}^{n-1})\cdot\nabla)\tilde{\bs{u}}_{h}^{n+1},\mathcal{A}^{-1}\tilde{\epsilon}_{\bs{u}}^{n+1})\nn\\
& \quad \leqslant C\delta_t|\bar{\epsilon}_{\bs{u}}^{n}|_{*}^{2}+C\delta_t\|R_{\bs{u}h}^{n+1}-2R_{\bs{u}h}^{n}+R_{\bs{u}h}^{n-1}\|_{0}^{2}+\frac{\chi+\mu}{2}\delta_t\|\nabla\mathcal{A}^{-1}\tilde{\epsilon}_{\bs{u}}^{n+1}\|_{0}^{2}\nn\\
& \qquad +C\delta_t(\|R_{\bs{u}h}^{n+1}-2R_{\bs{u}h}^{n}+R_{\bs{u}h}^{n-1}\|_{0}^{2}+\|\epsilon^{n}\|_{0}^{2}+\|\epsilon^{n-1}\|_{0}^{2})\|\nabla\tilde{\epsilon}_{\bs{u}}^{n+1}\|_{0}^{2}+C\delta_t(\|\epsilon_{\bs{u}}^{n}\|_{0}^{2}+\|\epsilon_{\bs{u}}^{n-1}\|_{0}^{2})\|\nabla\tilde{\epsilon}_{\bs{u}}^{n+1}\|_{0}^{2}\nn\\
& \quad \leqslant C\delta_t|\bar{\epsilon}_{\bs{u}}^{n}|_{*}^{2}+C\delta_t^{5}+C\delta_t(\delta_t^2+h^{2l+2})\|\nabla\tilde{\epsilon}_{\bs{u}}^{n+1}\|_{0}^{2}+\frac{\chi+\mu}{2}\delta_t\|\tilde{\epsilon}_{\bs{u}}^{n+1}\|_{0}^{2},
\\
\label{4.58}
&|-4\delta_{t}\zeta((R_{\bs{u}h}^{n+1}\cdot\nabla)R_{\bs{\omega} h}^{n+1}-(\bar{\bs{u}}_{h}^{n}\cdot\nabla)\bs{\omega}_{h}^{n+1}, \tilde{\mathcal{A}}^{-1}\epsilon_{\bs{\omega}}^{n+1})|\nn\\
& \quad =|-4\delta_{t}\zeta((\bar{\epsilon}_{\bs{u}}^{n}\cdot\nabla)R_{\bs{\omega} h}^{n+1}, \tilde{\mathcal{A}}^{-1}\epsilon_{\bs{\omega}}^{n+1})-4\delta_{t}\zeta(((R_{\bs{u}h}^{n+1}-2R_{\bs{u}h}^{n}+R_{\bs{u}h}^{n-1})\cdot\nabla)R_{\bs{\omega} h}^{n+1}, \tilde{\mathcal{A}}^{-1}\epsilon_{\bs{\omega}}^{n+1})\nn\\
& \qquad-4\delta_{t}\zeta((\bar{\epsilon}_{\bs{u}}^{n}\cdot\nabla)\epsilon_{\bs{\omega}}^{n+1}, \tilde{\mathcal{A}}^{-1}\epsilon_{\bs{\omega}}^{n+1})-4\delta_{t}\zeta(((R_{\bs{u}h}^{n+1}-2R_{\bs{u}h}^{n}+R_{\bs{u}h}^{n-1})\cdot\nabla)\epsilon_{\bs{\omega} }^{n+1}, \tilde{\mathcal{A}}^{-1}\epsilon_{\bs{\omega}}^{n+1})|\nn\\
& \quad \leqslant C\delta_{t}\zeta\|\bar{\epsilon}_{\bs{u}}^{n}\|_{0}\|R_{\bs{\omega} h}^{n+1}\|_{L^{\infty}}\|\nabla\tilde{\mathcal{A}}^{-1}\epsilon_{\bs{\omega}}^{n+1}\|_{0}+4\delta_{t}\zeta\|R_{\bs{u}h}^{n+1}-2R_{\bs{u}h}^{n}+R_{\bs{u}h}^{n-1}\|_{0}\|R_{\bs{\omega} h}^{n+1}\|_{L^{\infty}}\|\nabla\tilde{\mathcal{A}}^{-1}\epsilon_{\bs{\omega}}^{n+1}\|_{0}\nn\\
& \qquad+C\delta_{t}\zeta\|\bar{\epsilon}_{\bs{u}}^{n}\|_{0}\|\nabla\epsilon_{\bs{\omega}}^{n+1}\|_{0}\|\tilde{\mathcal{A}}^{-1}\epsilon_{\bs{\omega}}^{n+1}\|_{2}
+4\delta_{t}\zeta\|R_{\bs{u}h}^{n+1}-2R_{\bs{u}h}^{n}+R_{\bs{u}h}^{n-1}\|_{0}\|\nabla\epsilon_{\bs{\omega}}^{n+1}\|_{0}\|\tilde{\mathcal{A}}^{-1}\epsilon_{\bs{\omega}}^{n+1}\|_{2}\nn\\
& \quad \leqslant \frac{\chi+\mu}{4}\delta_t(\|\epsilon_{\bs{u}}^{n}\|_{0}^{2}+\|\epsilon_{\bs{u}}^{n-1}\|_{0}^{2})+C\delta_t^{5}+C\delta_t(\delta_t^2+h^{2l+2})\|\nabla\epsilon_{\bs{\omega}}^{n+1}\|_{0}^{2}+C\delta_t|\epsilon_{\bs{\omega}}^{n+1}|_{*}^{2},
\\
\label{4.59}
&|8\chi\delta_{t}(\nabla\times R_{\bs{u} h}^{n+1}-\nabla \times \bar{\bs{u}}_{h}^{n}, \tilde{\mathcal{A}}^{-1}\epsilon_{\bs{\omega}}^{n+1})|\nn\\
& \quad=|8\chi\delta_{t}(\nabla\times \bar{R}_{\bs{u} h}^{n}-\nabla \times \bar{\bs{u}}_{h}^{n}, \tilde{\mathcal{A}}^{-1}\epsilon_{\bs{\omega}}^{n+1})+8\chi\delta_{t}(\nabla\times (R_{\bs{u} h}^{n+1}-2R_{\bs{u} h}^{n}+R_{\bs{u} h}^{n-1}), \tilde{\mathcal{A}}^{-1}\epsilon_{\bs{\omega}}^{n+1})|\nn\\
& \quad \leqslant C\delta_{t}(\|\epsilon_{\bs{u}}^{n}\|_{0}+\|\epsilon_{\bs{u}}^{n-1}\|_{0})\|\nabla\tilde{\mathcal{A}}^{-1}\epsilon_{\bs{\omega}}^{n+1}\|_{0}+C\delta_{t}\|R_{\bs{u}h}^{n+1}-2R_{\bs{u}h}^{n}+R_{\bs{u}h}^{n-1}\|_{0}\|\nabla\tilde{\mathcal{A}}^{-1}\epsilon_{\bs{\omega}}^{n+1}\|_{0}\nn\\
& \quad \leqslant \frac{\chi+\mu}{4}\delta_t(\|\epsilon_{\bs{u}}^{n}\|_{0}^{2}+\|\epsilon_{\bs{u}}^{n-1}\|_{0}^{2})+C\delta_t^{5}+C\delta_{t}|\epsilon_{\bs{\omega}}^{n+1}|_{*}^{2},
\\
\label{4.60}
&|4\delta_{t}(R_{\bs{\omega} l}^{n+1}, \tilde{\mathcal{A}}^{-1}\epsilon_{\bs{\omega}}^{n+1})|\leqslant 4\delta_{t}\|R_{\bs{\omega} l}^{n+1}\|_{0} \|\tilde{\mathcal{A}}^{-1}\epsilon_{\bs{\omega}}^{n+1}\|_{0}
\leqslant C\delta_{t}\|R_{\bs{\omega} l}^{n+1}\|_{0}^{2}+\upsilon\delta_{t}\|\nabla\tilde{\mathcal{A}}^{-1}\epsilon_{\bs{\omega}}^{n+1}\|_{0}^{2}\nn\\
& \quad \leqslant C\delta_{t}\|R_{\bs{\omega} l}^{n+1}\|_{0}^{2}+\upsilon\delta_{t}|\epsilon_{\bs{\omega}}^{n+1}|_{*}^{2}
\leqslant C\delta_{t}(\delta_t^{2}+h^{l+1})^{2}+\upsilon\delta_{t}|\epsilon_{\bs{\omega}}^{n+1}|_{*}^{2},
\\
\label{4.62}
&|4\delta_{t}((R_{\bs{u}h}^{n+1}\cdot\nabla)R_{\theta h}^{n+1}-(\bar{\bs{u}}_{h}^{n}\cdot\nabla)\theta_{h}^{n+1}, \tilde{\mathcal{A}}^{-1}\epsilon_{\theta}^{n+1})|\nn\\
& \quad = |4\delta_{t}((\bar{\epsilon}_{\bs{u}}^{n}\cdot\nabla)R_{\theta h}^{n+1}, \tilde{\mathcal{A}}^{-1}\epsilon_{\theta}^{n+1})+
4\delta_{t}(((R_{\bs{u}h}^{n+1}-2R_{\bs{u}h}^{n}+R_{\bs{u}h}^{n-1})\cdot\nabla)R_{\theta h}^{n+1}, \tilde{\mathcal{A}}^{-1}\epsilon_{\theta}^{n+1})\nn\\
& \qquad+4\delta_{t}((\bar{\epsilon}_{\bs{u}}^{n}\cdot\nabla)\epsilon_{\theta}^{n+1}, \tilde{\mathcal{A}}^{-1}\epsilon_{\theta}^{n+1})+4\delta_{t}(((R_{\bs{u}h}^{n+1}-2R_{\bs{u}h}^{n}+R_{\bs{u}h}^{n-1})\cdot\nabla)\epsilon_{\theta}^{n+1}, \tilde{\mathcal{A}}^{-1}\epsilon_{\theta}^{n+1})|\nn\\
& \quad \leqslant 4\delta_{t}\|\bar{\epsilon}_{\bs{u}}^{n}\|_{0}\|R_{\theta h}^{n+1}\|_{L^{\infty}}\|\nabla\tilde{\mathcal{A}}^{-1}\epsilon_{\theta}^{n+1}\|_{0}+
4\delta_{t}\|R_{\bs{u}h}^{n+1}-2R_{\bs{u}h}^{n}+R_{\bs{u}h}^{n-1}\|_{0}\|R_{\theta h}^{n+1}\|_{L^{\infty}}\|\nabla \tilde{\mathcal{A}}^{-1}\epsilon_{\theta}^{n+1}\|_{0}\nn\\
& \qquad+4\delta_{t}\|\bar{\epsilon}_{\bs{u}}^{n}\|_{0}\|\nabla\epsilon_{\theta}^{n+1}\|_{0} \|\tilde{\mathcal{A}}^{-1}\epsilon_{\theta}^{n+1}\|_{2}+4\delta_{t}\|R_{\bs{u}h}^{n+1}-2R_{\bs{u}h}^{n}+R_{\bs{u}h}^{n-1}\|_{0}\|\nabla\epsilon_{\theta}^{n+1}\|_{0} \|\tilde{\mathcal{A}}^{-1}\epsilon_{\theta}^{n+1}\|_{2}\nn\\
& \quad \leqslant \frac{\chi+\mu}{4}\delta_t(\|\epsilon_{\bs{u}}^{n}\|_{0}^{2}+\|\epsilon_{\bs{u}}^{n-1}\|_{0}^{2})+C\delta_t^{5}+C\delta_t(\delta_t^2+h^{2l+2})\|\nabla\epsilon_{\theta}^{n+1}\|_{0}^{2}+C\delta_t|\epsilon_{\theta}^{n+1}|_{*}^{2}.
\\
\label{4.63}
&|4\delta_{t}(R_{\theta h}^{n+1}-\bar{\theta}_{h}^{n}\bs{e}, \tilde{\mathcal{A}}^{-1}\epsilon_{\theta}^{n+1})|\nn\\
& \quad \leqslant 4\delta_{t}(2\|\epsilon_{\theta}^{n}\|_{0}+\|\epsilon_{\theta}^{n-1}\|_{0})\|\tilde{\mathcal{A}}^{-1}\epsilon_{\theta}^{n+1}\|_{0}+4\delta_{t}\|R_{\theta h}^{n+1}-2R_{\theta h}^{n}+R_{\theta h}^{n-1}\|_{0}\|\tilde{\mathcal{A}}^{-1}\epsilon_{\theta}^{n+1}\|_{0}\nn\\
& \quad \leqslant \frac{7\kappa\delta_{t}}{4}(\|\epsilon_{\theta}^{n}\|_{0}^{2}+\|\epsilon_{\theta}^{n-1}\|_{0}^{2})+C\delta_{t}^{5}+C\delta_t|\epsilon_{\theta}^{n+1}|_{*}^{2}.
\\
\label{4.64}
&\frac{\chi+\mu}{2}\delta_t\|\tilde{\epsilon}_{\bs{u}}^{n+1}\|_{0}^{2}=\frac{\chi+\mu}{2}\delta_t\|\epsilon_{\bs{u}}^{n+1}\|_{0}^{2}+\frac{2(\chi+\mu)\delta_t^{3}}{9}\|\nabla (\epsilon_{p}^{n+1}-\epsilon_{p}^{n}))\|_{0}^{2} +\frac{2(\chi+\mu)\delta_t^{3}}{9}\|\nabla (Q_{h}^{n+1}-Q_{h}^{n}))\|_{0}^{2}.
\end{align}
Combining \refe{4.50}-\refe{4.64} and summing up from $n=1$ to $N$, we derive
   \begin{align*}\label{Optimal-Gronwall}
   &\|\epsilon_{\bs{u}}^{N+1}\|_{0}^{2}+\|2\epsilon_{\bs{u}}^{N+1}-\epsilon_{\bs{u}}^{N}\|_{0}^{2}+|\epsilon_{\bs{\omega}}^{N+1}|_{*}^{2}+|2\epsilon_{\bs{\omega}}^{N+1}-\epsilon_{\bs{\omega}}^{N-1}|_{*}^{2}
+|\epsilon_{\theta}^{N+1}|_{*}^{2}+|2\epsilon_{\theta}^{N+1}-\epsilon_{\theta}^{N-1}|_{*}^{2}\nn\\
& \quad +\delta_{t}\sum_{n=0}^{N}(\frac{\chi+\mu}{2}\|\epsilon_{\bs{u}}^{n+1}\|_{0}^{2}+\upsilon\|\epsilon_{\bs{\omega}}^{n+1}\|_{0}^{2}+4\eta\|\epsilon_{\bs{\omega}}^{n+1}\|_{0}^{2}+16\chi\|\epsilon_{\bs{\omega}}^{n+1}\|_{0}^{2}+\frac{\kappa}{2}\|\epsilon_{\theta}^{n+1}\|_{0}^{2}\nn\\
& \quad +|\epsilon_{\bs{u}}^{n+1}-2\epsilon_{\bs{u}}^{n}+\epsilon_{\bs{u}}^{n-1}|_{*}^{2}+|\epsilon_{\bs{\omega}}^{n+1}-2\epsilon_{\bs{\omega}}^{n}+\epsilon_{\bs{\omega}}^{n-1}|_{*}^{2}
+|\epsilon_{\theta}^{n+1}-2\epsilon_{\theta}^{n}+\epsilon_{\theta}^{n-1}|_{*}^{2})\nn\\
& \leqslant \|\epsilon_{\bs{u}}^{1}\|_{0}^{2}+\|2\epsilon_{\bs{u}}^{1}-\epsilon_{\bs{u}}^{0}\|_{0}^{2}+|\epsilon_{\bs{\omega}}^{1}|_{*}^{2}+|2\epsilon_{\bs{\omega}}^{1}-\epsilon_{\bs{\omega}}^{0}|_{*}^{2}
+|\epsilon_{\theta}^{1}|_{*}^{2}+|2\epsilon_{\theta}^{1}-\epsilon_{\theta}^{0}|_{*}^{2}+C\delta_{t}(\|\epsilon_{\bs{u}}^{0}\|_{0}^{2}+\|\epsilon_{\bs{u}}^{1}\|_{0}^{2}+\|\epsilon_{\bs{\omega}}^{0}\|_{0}^{2}\nn\\
& \quad+\|\epsilon_{\bs{\omega}}^{1}\|_{0}^{2}+\|\epsilon_{\theta}^{0}\|_{0}^{2}+\|\epsilon_{\theta}^{1}\|_{0}^{2})+C\delta_t(\delta_{t}^{2}+h^{l+1})^{2}+C\delta_{t}(\sum_{n=1}^{N+1}(|\bar{\epsilon}_{\bs{u}}^{n}|_{*}^{2}+|\epsilon_{\theta}^{n+1}|_{*}^{2}+|\epsilon_{\bs{\omega}}^{n+1}|_{*}^{2}).
   \end{align*}
By Assumption(A.3) and Gr\"{o}nwall's lemma, we complete the proof.
\end{proof}

\section{Numerical examples}

In this section, we conduct several 2D and 3D numerical simulations to verify the accuracy and stability of the second-order pressure projection method for the micropolar Rayleigh-B{\'e}nard convection system and illustrate interesting phenomena in benchmark problems. The implementation of our codes is based on the open source element method software \textit{Freefem++} \cite{Hecht2012}. 
\subsection{Accuracy tests in 2D}

 We examine the two-dimensional micropolar Rayleigh-B{\'e}nard convection system with $\Omega=(0,1)^2$ satisfying the exact solution:
\begin{align*}
\left\{
\begin{aligned}
\mathbf{u} &= \left( \sin(2\pi x + t)\sin(2\pi y + t),\ \cos(2\pi x + t)\cos(2\pi y + t) \right), \\
p &= \sin\left(2\pi(x - y) + t\right), \\
\omega &= \sin(2\pi x + t)\sin(2\pi y + t), \\
\theta &= \cos(2\pi x + t)\cos(2\pi y + t).
\end{aligned}
\right.
\end{align*}
The finite element spaces are selected as $P2-P1-P2-P2$ finite element spaces which satisfy the Ladyzhenskaya-Babu\v{s}ka-Brezzi condition. To achieve the optimal convergence rate, we set $\delta_t = h^{\frac{3}{2}}$ and $h = 1/n$, where $n = 8 k$ with $k = 1, 2, \ldots, 7$.

Table \ref{Ta1} presents the numerical results of the model with $\upsilon=0.1$, $\mu=1.0$, $\chi=0.1$, $\kappa=0.1$ and $T=1.0$. It is evident that the errors decrease as the spatial steps diminish, with the convergence rates being optimal. To verify the robustness of the numerical method, the numerical results for various parameters in time $T = 1.0$ are presented in Tables \ref{Ta2} to \ref{Ta4}. From the results, it can be observed that the errors decrease as the mesh is refined. The \( L^2 \) convergence rates for velocity, angular velocity, and temperature reach third order, while the \( H^1 \) convergence rates reach second order. The results show a good agreement between the numerical solutions and the theoretical analysis. 

\begin{table}[ht!] \label{Ta1}
	\caption{Numerical results for $\upsilon=0.1, \chi=0.1, \mu=1.0, \kappa=0.1$ with various $h$ in 2D.}
	\footnotesize
	\centering
	\begin{tabular}{ccccccccc}
		\hline
		\hline
		$1/h$ & $\|\textbf{u}_h-\textbf{u}\|_0$ & $\|\nabla(\textbf{u}_{h}-\textbf{u})\|_0$ &  $\|p_h-p\|_0$ & $\|\omega_h-\omega\|_0$ & $\|\nabla(\omega_{h}-\omega)\|_0$ &  $\|\theta_h-\theta\|_0$ &  $\|\nabla(\theta_h-\theta)\|_0$ \\
		\hline
		8 & 6.1732e-03      & 3.7374e-01       & 1.3940e-01      & 4.3030e-03      &2.5955e-01  &   4.3709e-03   &    2.5926e-01\\
		16 & 7.4497e-04      & 9.5095e-02       & 1.9781e-02      & 4.9888e-04      &6.6827e-02  &  5.0054e-04   &    6.6812e-02\\
		24 &  2.1942e-04      & 4.2423e-02      & 6.7066e-03     &  1.4719e-04     & 2.9878e-02  &   1.4623e-04    &    2.9875e-02\\
		32 & 9.2460e-05     & 2.3892e-02       & 3.1722e-03      & 6.2173e-05      & 1.6842e-02  &   6.1486e-05  &    1.6841e-02\\
		40 & 4.7353e-05       & 1.5297e-02       & 1.7860e-03     & 3.1877e-05      & 1.0790e-02  &   3.1455e-05    &  1.0789e-02\\
		48 & 2.7405e-05     & 1.0625e-02       & 1.1213e-03     & 1.8477e-05      & 7.4970e-03  &   1.8203e-05   &    7.4968e-03\\
		56 & 1.7261e-05   &   7.8061e-03    &   7.5903e-04    &   1.1651e-05  &    5.5098e-03    &   1.1466e-05   &  5.5097e-03 \\
		\hline
		$ 1/h$ & $\textbf{u}_{\text{order}L^2}$ &  $\textbf{u}_{\text{order}H^1}$ & $ p_{\text{order}L^2}$&  $\omega_{\text{order}L^2}$ & $\omega_{\text{order}H^1}$ &  $ \theta_{\text{order}L^2}$ & $ \theta_{\text{order}H^1}$ \\
		\hline
		16 & 3.0508 & 1.9746 & 2.8170 & 3.1086 & 1.9575 & 3.1264 & 1.9562 \\
		24 & 3.0147 & 1.9908 & 2.6676 & 3.0105 & 1.9853 & 3.0348 & 1.9849 \\
		32 & 3.0041 & 1.9958 & 2.6024 & 2.9957 & 1.9926 & 3.0116 & 1.9925 \\
		40 & 2.9987 & 1.9982 & 2.5743 & 2.9938 & 1.9954 & 3.0037 & 1.9955 \\
		48 & 2.9997 & 1.9989 & 2.5531 & 2.9912 & 1.9968 & 3.0000 & 1.9968 \\
		56 & 2.9989 & 2.0000 & 2.5313 & 2.9915 & 1.9979 & 2.9984 & 1.9978 \\
		\hline
		\hline
	\end{tabular}
\end{table}

\begin{table}[ht!]\label{Ta2}
	\caption{Numerical results for $\upsilon=0.1, \chi=0.1, \mu=1.0, \kappa=0.1$ with various $h$ in 2D.}
	\footnotesize
	\centering
	\begin{tabular}{ccccccccc}
		\hline 
		\hline
		$1/h$ & $\|\textbf{u}_h-\textbf{u}\|_0$ & $\|\nabla(\textbf{u}_{h}-\textbf{u})\|_0$ &  $\|p_h-p\|_0$ & $\|\omega_h-\omega\|_0$ & $\|\nabla(\omega_{h}-\omega)\|_0$ &  $\|\theta_h-\theta\|_0$ &  $\|\nabla(\theta_h-\theta)\|_0$ \\
		\hline
		8 & 6.3831e-04      & 3.7854e-01       & 3.3886e-02      & 4.2889e-03      & 2.5957e-01  &   4.3875e-03   &    2.5930e-01\\
		16 & 7.5275e-04      & 9.5312e-02       & 5.6920e-03      & 5.0347e-04      & 6.6829e-02  &   5.0106e-04   &    6.6813e-02\\
		24 & 2.2132e-04      & 4.2462e-02      & 2.2377e-03      &  1.4834e-04     & 2.9879e-02  &   1.4674e-04   &    2.9875e-02\\
		32 & 9.3289e-05     & 2.3904e-02       & 1.1930e-04      & 6.2598e-05      & 1.6842e-02  &   6.1794e-05  &    1.6841e-02\\
		40 & 4.7797e-05       & 1.5303e-02       & 7.4032e-04     & 3.2077e-05     & 1.0790e-02  &   3.1641e-05    &  1.0789e-02\\
		48 & 2.7675e-05      & 1.0628e-02       & 5.0391e-04     & 1.8586e-05      & 7.4970e-03  &   1.8323e-05  &    7.4968e-03\\
		56 & 1.7437e-05   &    7.8081e-03    &    3.6505e-04    &   1.1717e-05  &    5.5098e-03    &   1.1547e-05   &  5.5097e-03 \\
		\hline
		$ 1/h$ & $\textbf{u}_{\text{order}L^2}$ &  $\textbf{u}_{\text{order}H^1}$ & $ p_{\text{order}L^2}$&  $\omega_{\text{order}L^2}$ & $\omega_{\text{order}H^1}$ &  $ \theta_{\text{order}L^2}$ & $ \theta_{\text{order}H^1}$ \\
		\hline
		8  &  -- &   --&     -- &     --&    --& --&  --\\
		16 & 3.0840 & 1.9897 & 2.5737 & 3.0906  &  1.9576   & 3.1304  &  1.9564 \\
		24 & 3.0191 & 1.9941 & 2.3026 & 3.0139 & 1.9854 & 3.0288 & 1.9850 \\
		32 & 3.0030 & 1.9972 & 2.1865 & 2.9989 & 1.9926 & 3.0063 & 1.9925 \\
		40 & 2.9970 & 1.9987 & 2.1381 & 2.9962 & 1.9956 & 2.9997 & 1.9955 \\
		48 & 2.9971 & 1.9996 & 2.1100 & 2.9932 & 1.9970 & 2.9962 & 1.9970 \\
		56 & 2.9964 & 2.0000 & 2.0912 & 2.9930 & 1.9979 & 2.9951 & 1.9978 \\
		\hline
		\hline
	\end{tabular}
\end{table}

\begin{table}[ht!]\label{Ta3}
 \caption{Numerical results for $\upsilon=0.1, \chi=0.1, \mu=0.01, \kappa=0.1$ with various $h$ in 2D.}
	\footnotesize
\centering
\begin{tabular}{ccccccccc}
	\hline 
	\hline
	$1/h$ & $\|\textbf{u}_h-\textbf{u}\|_0$ & $\|\nabla(\textbf{u}_{h}-\textbf{u})\|_0$ &  $\|p_h-p\|_0$ & $\|\omega_h-\omega\|_0$ & $\|\nabla(\omega_{h}-\omega)\|_0$ &  $\|\theta_h-\theta\|_0$ &  $\|\nabla(\theta_h-\theta)\|_0$ 
	\\
	\hline
    8 & 6.4267e-03      & 3.8252e-01       & 2.4320e-02      & 4.3786e-03      & 2.5964e-01  &   4.3623e-03   &    2.5929e-01\\
   16 & 7.5920e-04      & 9.5573e-02       & 4.7747e-03      & 5.1153e-04      & 6.6832e-02  &  5.0221e-04   &    6.6814e-02\\
   24 & 2.2340e-04      & 4.2528e-02      & 2.0077e-03     &  1.5035e-04     & 2.9879e-02  &   1.4764e-04    &    2.9876e-02\\
   32 & 9.4598e-05     & 2.3929e-02       & 1.0993e-03      & 6.3369e-05      & 1.6843e-02  &   6.2240e-05  &    1.6842e-02\\
   40 & 4.8512e-05       & 1.5313e-02       & 6.9091e-04     & 3.2450e-05      & 1.0790e-02  &   3.1935e-05    &  1.0790e-02\\
   48 & 2.8112e-05     & 1.0633e-02       & 4.7390e-04     & 1.8793e-05      & 7.4970e-03  &   1.8509e-05   &    7.4968e-03\\
   56 & 1.7723e-05   &   7.8108e-03    &   3.4549e-04    &   1.1844e-05  &    5.5098e-03    &   1.1671e-05   &  5.5097e-03 \\
		\hline
$ 1/h$ & $\textbf{u}_{\text{order}L^2}$ &  $\textbf{u}_{\text{order}H^1}$ & $ p_{\text{order}L^2}$&  $\omega_{\text{order}L^2}$ & $\omega_{\text{order}H^1}$ &  $ \theta_{\text{order}L^2}$ & $ \theta_{\text{order}H^1}$ \\
\hline
8  &  -- &   --&     -- &     --&    --& --&  --\\
16 & 3.0815 & 2.0009 & 2.3487 & 3.0976 & 1.9579 & 3.1187 & 1.9563 \\
24 & 3.0170 & 1.9970 & 2.1367 & 3.0198 & 1.9854 & 3.0194 & 1.9850 \\
32 & 2.9871 & 1.9990 & 2.0937 & 3.0033 & 1.9925 & 3.0025 & 1.9924 \\
40 & 2.9928 & 2.0005 & 2.0813 & 2.9993 & 1.9956 & 2.9904 & 1.9954 \\
48 & 2.9926 & 2.0005 & 2.0678 & 2.9959 & 1.9971 & 2.9917 & 1.9973 \\
56 & 2.9927 & 2.0010 & 2.0502 & 2.9949 & 1.9979 & 2.9916 & 1.9978
\\
\hline
\hline
\end{tabular}
\end{table}

\begin{table}[ht!]\label{Ta4}
	 \caption{Numerical results for $\upsilon=0.1, \chi=0.1, \mu=0.001, \kappa=0.1$ with various $h$ in 2D.}	\footnotesize
	 \centering
	 \begin{tabular}{ccccccccc}
	 	\hline 
	 	\hline
	 	$1/h$ & $\|\textbf{u}_h-\textbf{u}\|_0$ & $\|\nabla(\textbf{u}_{h}-\textbf{u})\|_0$ &  $\|p_h-p\|_0$ & $\|\omega_h-\omega\|_0$ & $\|\nabla(\omega_{h}-\omega)\|_0$ &  $\|\theta_h-\theta\|_0$ &  $\|\nabla(\theta_h-\theta)\|_0$ 
	 	\\
	 	\hline
    8 & 6.4307e-03      & 3.8350e-01       & 2.3522e-02      & 4.4017e-03      & 2.5965e-01  &   4.3582e-03   &    2.5929e-01\\
   16 & 7.6082e-04      & 9.5647e-02       & 4.7114e-03      & 5.1356e-04      & 6.6833e-02  &  5.0267e-04   &    6.6815e-02\\
   24 & 2.2467e-04      & 4.2546e-02      & 1.9901e-03     &  1.5086e-04     & 2.9879e-02  &   1.4790e-04    &    2.9876e-02\\
   32 & 9.4920e-05     & 2.3935e-02       & 1.0910e-03      & 6.3568e-05      & 1.6843e-02  &   6.2434e-05  &    1.6842e-02\\
   40 & 4.8688e-05       & 1.5316e-02       & 6.8632e-04     & 3.2547e-05      & 1.0790e-02  &   3.2011e-05    &  1.0790e-02\\
   48 & 2.8218e-05     & 1.0634e-02       & 4.7115e-04     & 1.8847e-05      & 7.4970e-03  &   1.8556e-05   &    7.4968e-03\\
   56 & 1.7793e-05   &   7.8114e-03    &   3.4381e-04    &   1.1877e-05  &    5.5098e-03    &   1.1702e-05   &  5.5097e-03 \\
  \hline
  $ 1/h$ & $\textbf{u}_{\text{order}L^2}$ &  $\textbf{u}_{\text{order}H^1}$ & $ p_{\text{order}L^2}$&  $\omega_{\text{order}L^2}$ & $\omega_{\text{order}H^1}$ &  $ \theta_{\text{order}L^2}$ & $ \theta_{\text{order}H^1}$ \\
  \hline
8  &  -- &   --&     -- &     --&    --& --&  --\\
16 & 3.0793 & 2.0034 & 2.3198 & 3.0995 & 1.9579 & 3.1160 & 1.9563 \\
24 & 3.0083 & 1.9979 & 2.1255 & 3.0213 & 1.9855 & 3.0173 & 1.9851 \\
32 & 2.9950 & 1.9996 & 2.0894 & 3.0042 & 1.9925 & 2.9978 & 1.9924 \\
40 & 2.9918 & 2.0007 & 2.0772 & 3.0000 & 1.9956 & 2.9937 & 1.9954 \\
48 & 2.9918 & 2.0011 & 2.0632 & 2.9965 & 1.9971 & 2.9908 & 1.9973 \\
56 & 2.9916 & 2.0011 & 2.0440 & 2.9954 & 1.9979 & 2.9908 & 1.9978
\\
\hline
\hline
\end{tabular}
\end{table}

\subsection{Accuracy tests in 3D}
We consider the three-dimensional ($\Omega=(0,1)^3$) micropolar Rayleigh-B{\'e}nard convection system with the following exact solution: 
\begin{align*}
\left\{
\begin{aligned}
\mathbf{u} &= \left(
\sin(t)\sin(\pi x)\sin(\pi (y + 0.5))\sin(\pi z),\ 
\sin(t)\cos(\pi x)\cos(\pi (y + 0.5))\sin(\pi z),\ 
0
\right), \\
p &= \sin(t)(2x - 1)(2y - 1)(2z - 1), \\
\omega &= \left(
\sin(2\pi x + t),\ 
\sin(2\pi y + t),\ 
\sin(2\pi z + t)
\right), \\
\theta &= \cos(2\pi x + t)\cos(2\pi y + t)\cos(2\pi z + t).
\end{aligned}
\right.
\end{align*}

The three-dimensional distributions of velocity, pressure, angular velocity, and temperature at time $t = 1$, $h=1/32$, $\delta_t=h^2$ are plotted in \Cref{fig:total03}  - \Cref{fig:total04}. The results indicate that all physical quantities are uniformly distributed. To verify the accuracy of the numerical scheme, we compute the errors and convergence rates. As shown in Table \ref{Ta7}, the spatial mesh sizes are taken as $h = 1/n$, where $n = 4 k $ with $k =2, 3, \dots, 8$, and the time step is set to \( \delta_t = h^2 \). From the results, it can be observed that the errors decrease as the mesh is refined. The \( L^2 \) convergence rates for velocity, angular velocity, and temperature attain third order, while the corresponding \( H^1 \) convergence rates attain second order, all of which are optimal and consistent with the theoretical predictions. 
\begin{table}[ht!]\label{Ta7}
	 \caption{Numerical results for $\upsilon=1.0, \chi=1.0, \mu=1.0, \kappa=1.0$ with various $h$ in 3D.}\footnotesize
	 \centering
	 \begin{tabular}{ccccccccc}
	 	\hline 
	 	\hline
	 	$1/h$ & $\|\textbf{u}_h-\textbf{u}\|_0$ & $\|\nabla(\textbf{u}_{h}-\textbf{u})\|_0$ &  $\|p_h-p\|_0$ & $\|\omega_h-\omega\|_0$ & $\|\nabla(\omega_{h}-\omega)\|_0$ &  $\|\theta_h-\theta\|_0$ &  $\|\nabla(\theta_h-\theta)\|_0$ 
	 	\\
	 	\hline
   8 & 7.6130e-04      & 5.3669e-02      & 1.5355e-02    & 3.3160e-03   & 1.7524e-01  &     6.2245e-03      & 3.3628e-01\\
   12& 2.2484e-04      & 2.4161e-02      & 3.8462e-03    & 9.8079e-04   & 7.8294e-02  &     1.7056e-03      &1.5665e-01\\
   16& 9.4811e-05      & 1.3656e-02      & 1.5023e-03    & 4.1404e-04   & 4.4122e-02  &     6.8933e-04      & 8.9855e-02\\
   20& 4.8506e-05      & 8.7564e-03      & 7.6056e-04    & 2.1255e-04   & 2.8262e-02  &     3.4461e-04      & 5.8076e-02\\
   24& 2.8023e-05      & 6.0827e-03      & 4.5769e-04    & 1.2370e-04   & 1.9636e-02  &     1.9662e-04      & 4.0557e-02\\
   28& 1.7536e-05      & 4.4672e-03      & 3.0906e-04    & 7.8866e-05   & 1.4430e-02    &   1.2270e-04      & 2.9901e-02 \\
   32& 1.1892e-05      & 3.4234e-03      & 2.2695e-04    & 5.4536e-05   & 1.1051e-02    &   8.1747e-05      & 2.2946e-02 \\
  \hline
  $ 1/h$ & $\textbf{u}_{\text{order}L^2}$ &  $\textbf{u}_{\text{order}H^1}$ & $ p_{\text{order}L^2}$&  $\omega_{\text{order}L^2}$ & $\omega_{\text{order}H^1}$ &  $ \theta_{\text{order}L^2}$ & $ \theta_{\text{order}H^1}$ \\
  \hline
8  &  -- &   --&     -- &     --&    --& --&  --\\
12 & 3.0080 & 1.9683 & 3.4142 & 3.0043 & 1.9871 & 3.1930 & 1.8842 \\
16 & 3.0016 & 1.9833 & 3.2677 & 2.9977 & 1.9936 & 3.1491 & 1.9320 \\
20 & 3.0035 & 1.9915 & 3.0506 & 2.9882 & 1.9962 & 3.1070 & 1.9559 \\
24 & 3.0093 & 1.9983 & 2.7855 & 2.9688 & 1.9974 & 3.0779 & 1.9693 \\
28 & 3.0410 & 2.0025 & 2.5473 & 2.9201 & 1.9981 & 3.0586 & 1.9774 \\
32 & 2.9087 & 1.9928 & 2.3125 & 2.7626 & 1.9984 & 3.0415 & 1.9828
 \\
 \hline
 \hline
 \end{tabular}
\end{table}

\begin{figure}[ht!]
 \centering
  \subfigure[Multiple isosurfaces of velocity]{
    \includegraphics[width=0.45\textwidth]{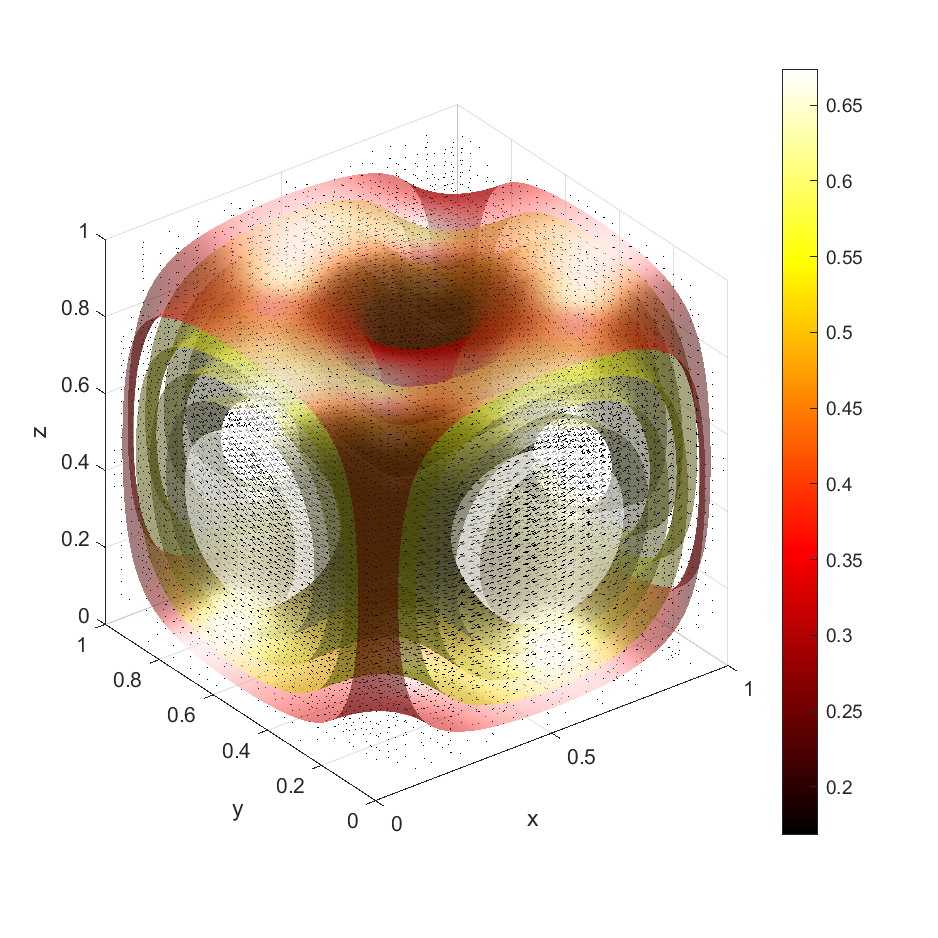}
    \label{fig:sub-u}
  } 
  \subfigure[Top view of velocity]{
    \includegraphics[width=0.45\textwidth]{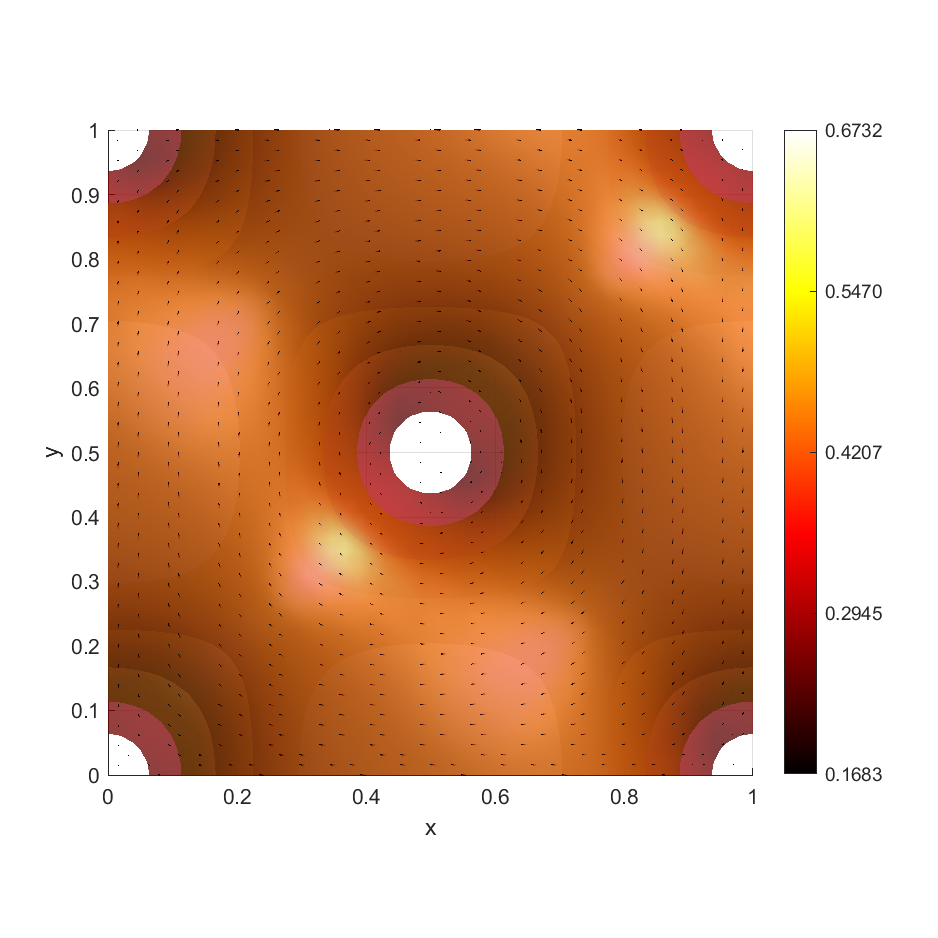}
    \label{fig:sub-u-top}
  }
 \subfigure[Multiple isosurfaces of pressure]{
	\includegraphics[width=0.45\textwidth]{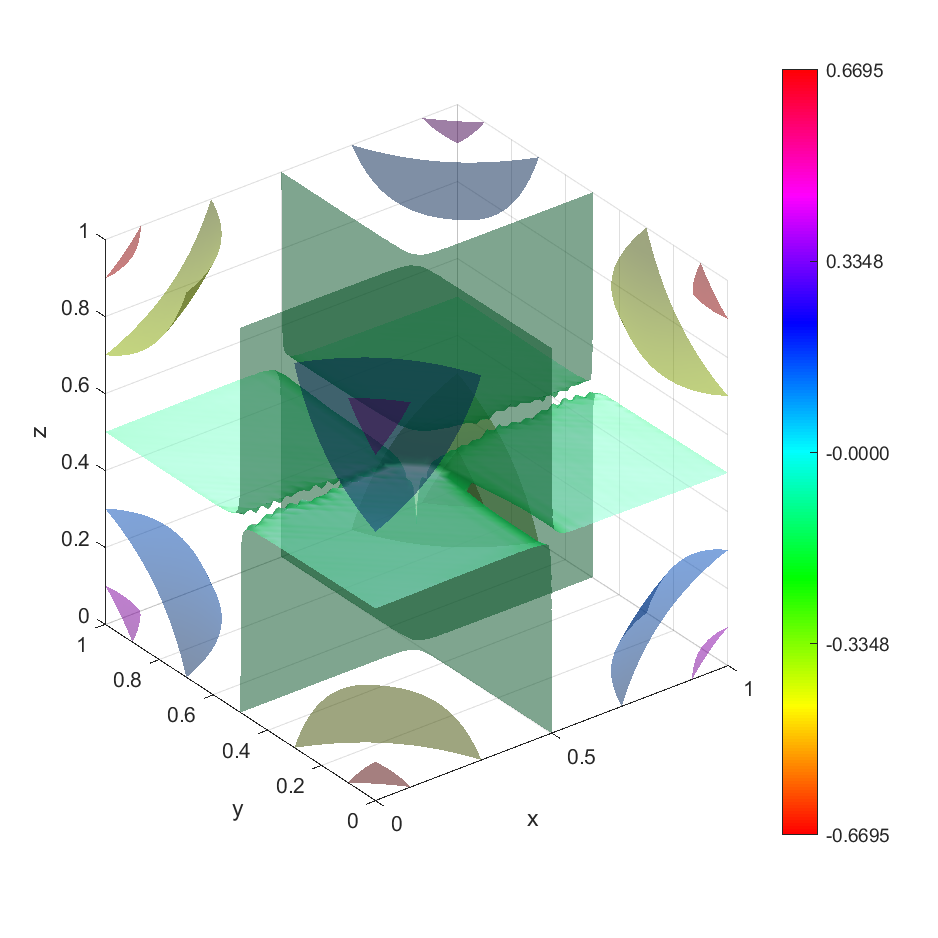}
	\label{fig:sub-p}  }
	\subfigure[Top view of pressure]{
		\includegraphics[width=0.45\textwidth]{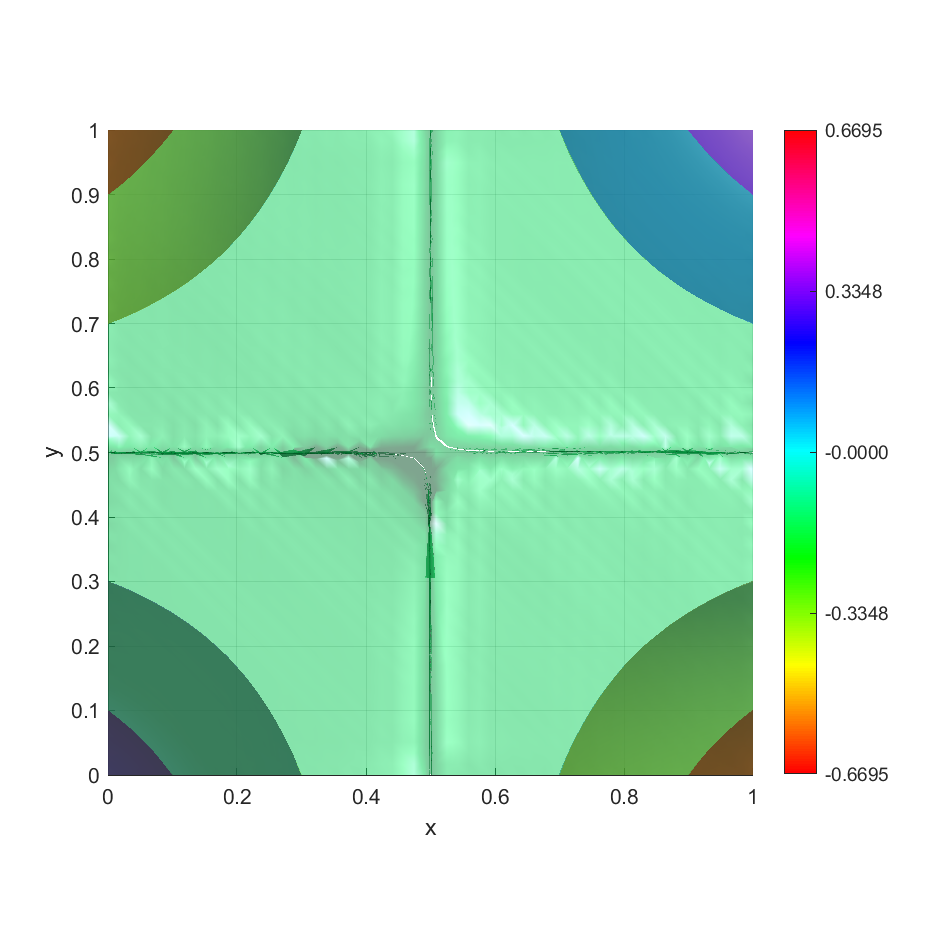}
		\label{fig:sub-p-top}}
\caption{Numerical distributions of the velocity and pressure.}
\label{fig:total03}
\end{figure}

\begin{figure}[ht!]
\centering
 \subfigure[Multiple isosurfaces of angular velocity.]{
      \includegraphics[width=0.45\textwidth]{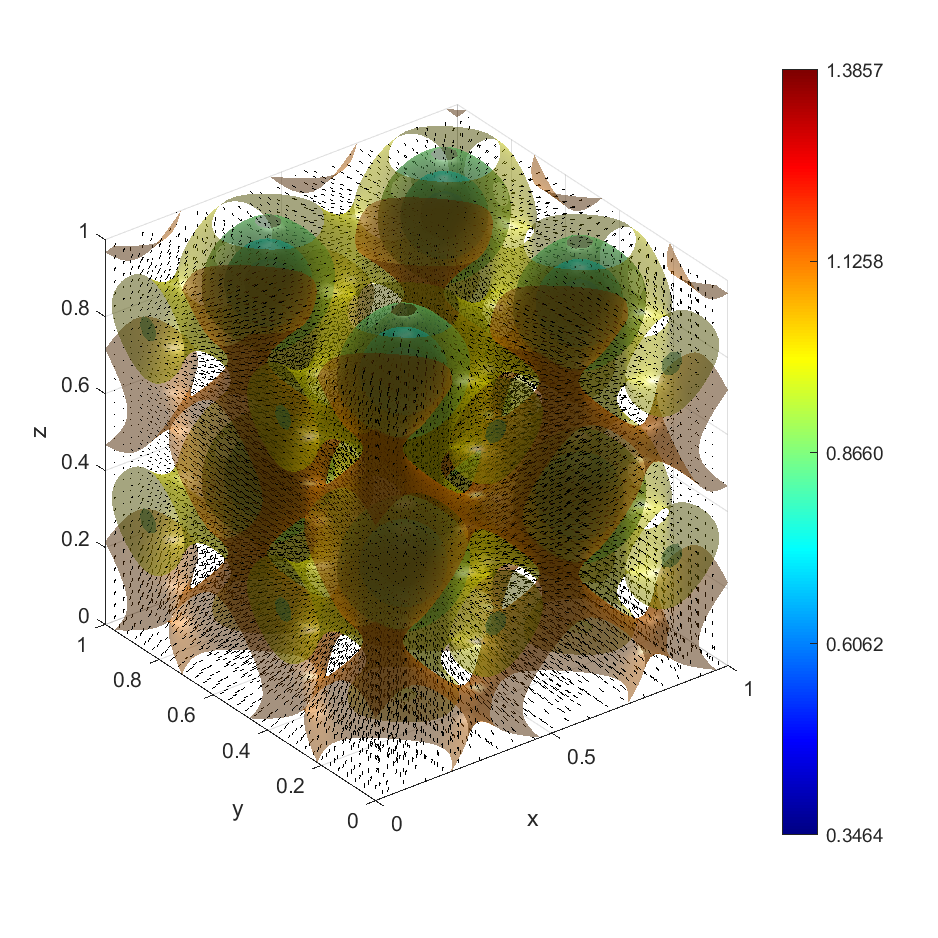}
   \label{fig:sub-w}
  }
  \subfigure[Top view of angular velocity.]{
   \includegraphics[width=0.45\textwidth]{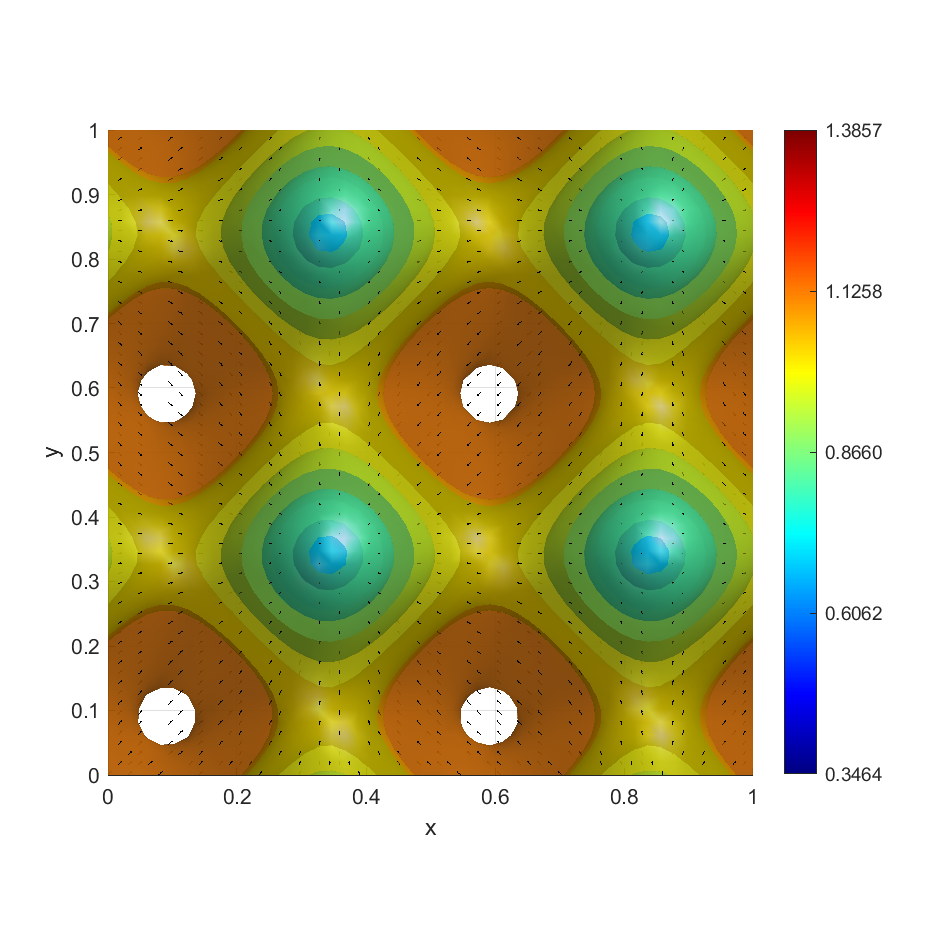}
    \label{fig:sub-w-top}
  }
    \subfigure[Multiple isosurfaces of temperature.]{
  	\includegraphics[width=0.45\textwidth]{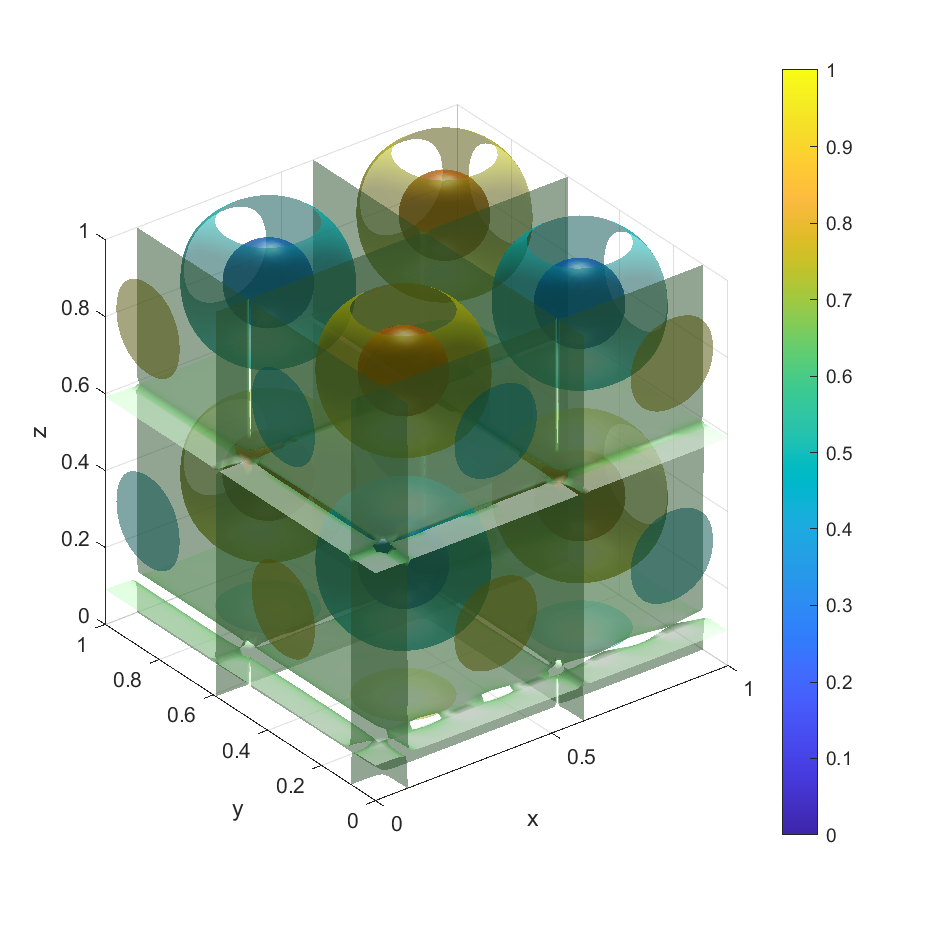}
  	\label{fig:sub-theta}
  }
 \subfigure[Top view of temperature.]{
 	\includegraphics[width=0.45\textwidth]{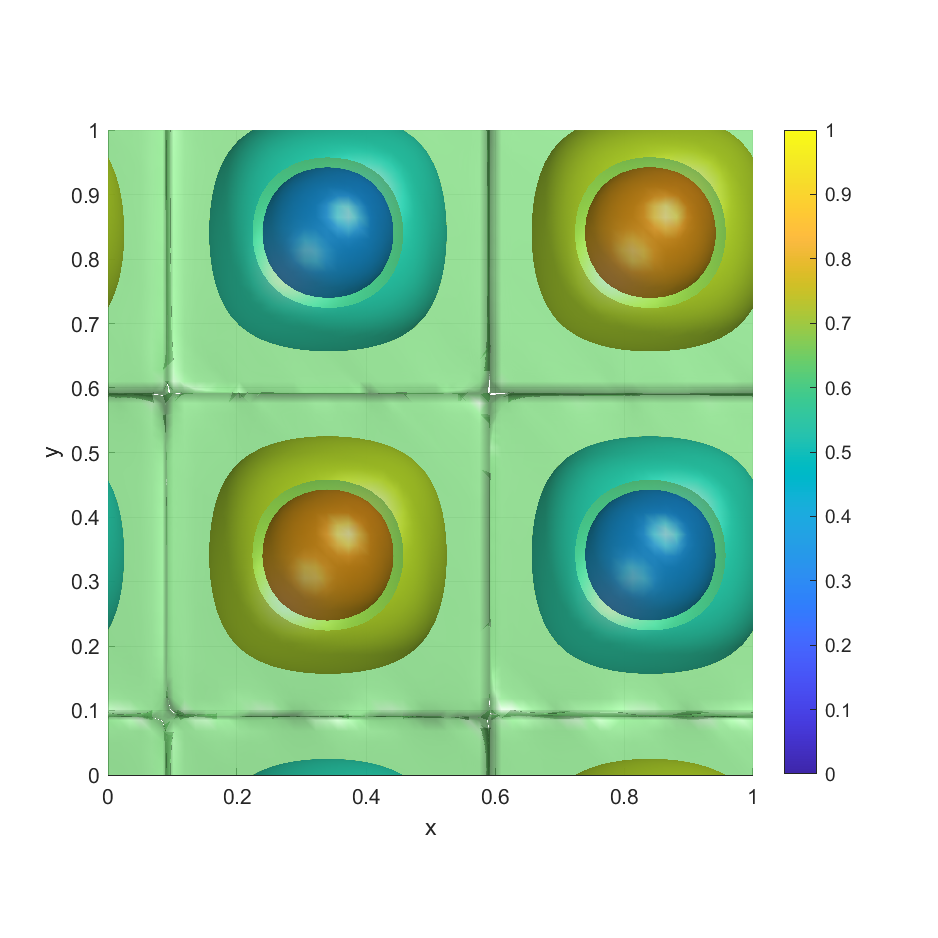}
 	\label{fig:sub-theta-top}
 }
  \caption{Numerical distributions of the angularr velocity and temperature.}
  \label{fig:total04}
\end{figure}

\subsection{Stirring of a passive scalar}

 In fluid dynamics, a passive scalar, such as temperature, concentration, or dye, is transported by the flow without affecting it. This numerical experiment confirms the stability of the second-order pressure projection scheme and its potential for engineering applications. In laminar flows, weak diffusion limits mixing and slows reactant dispersion. Active ferromagnetic nanoparticles can overcome this limitation by being manipulated with external magnetic fields to stir the fluid and enhance passive scalar transport. The dynamics of such systems can be modeled using the micropolar Navier-Stokes equations\cite{Rajagopal1982, Khanafer2003, ZhangHuangHeYang2025}. In this work, we adopt the micropolar Rayleigh-B{\'e}nard convection system to describe and analyze this process.

We consider system \refe{1.1} in the domain $\Omega=(-1,1)^2$ and set $\upsilon =2.0, \chi=0.1, \kappa=1.0$\ and\ $ \mu=0.1.$
 The micropolar Rayleigh-B{\'e}nard convection system is further supplemented with a convertive equation given by
$$
\phi_t + (\mathbf{u} \cdot \nabla) \phi = 0,  \phi_{t=0}=0.5(1 - \tanh(y/\delta)), \qquad \delta=0.5h.
$$
There is no diffusion term in this equation, so any observed mixing is solely induced by the flow pattern. To avoid nonphysical undershoot or overshoot caused by numerical dispersion, a simple limiter is applied at each time step:
$
\phi(x) \leftarrow \min\bigl(1, \max(0, \phi(x))\bigr).
$
This limiter ensures that the scalar field \(\phi\) remains within the physically admissible bounds throughout the simulation.
The velocity, angular velocity field and pressure field
 at time $t=50$ are displayed in Figure\Cref{fig:total13}.
Due to the applied torque $f_w=25(x-1)$, the linear velocity field $\mathbf{u}$ exhibits a rotational pattern, as shown in \Cref{fig:sub-u-string}. \Cref{fig:total14} illustrates the temporal evolution of the passive scalar $\phi$ under the applied laminar flow. Initially, $\phi$ is initialized as a smooth profile (\Cref{fig:sub-T=0}), which is gradually deformed and stretched by the flow field over time. As the simulation progresses, the scalar exhibits complex spiral structures due to continuous advection, indicating enhanced mixing (\Cref{fig:sub-T=1}-\Cref{fig:sub-T=45}). By $t=50$, the scalar distribution demonstrates a highly folded pattern, suggesting that the flow efficiently enhances the dispersion of the passive scalar. This sequence not only confirms the stability of the numerical scheme but also visualizes the mixing process induced by the underlying velocity field.

\begin{figure}[ht!]
  \centering
  \subfigure[Velocity]{
    \includegraphics[width=0.22\textwidth]{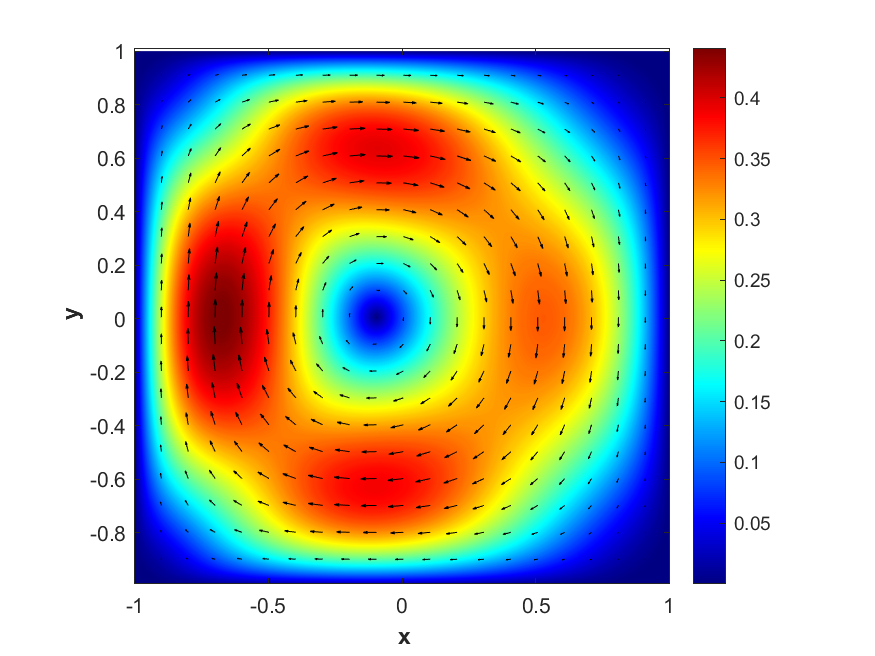}
    \label{fig:sub-u-string}
 }
  \subfigure[Pressure]{
    \includegraphics[width=0.22\textwidth]{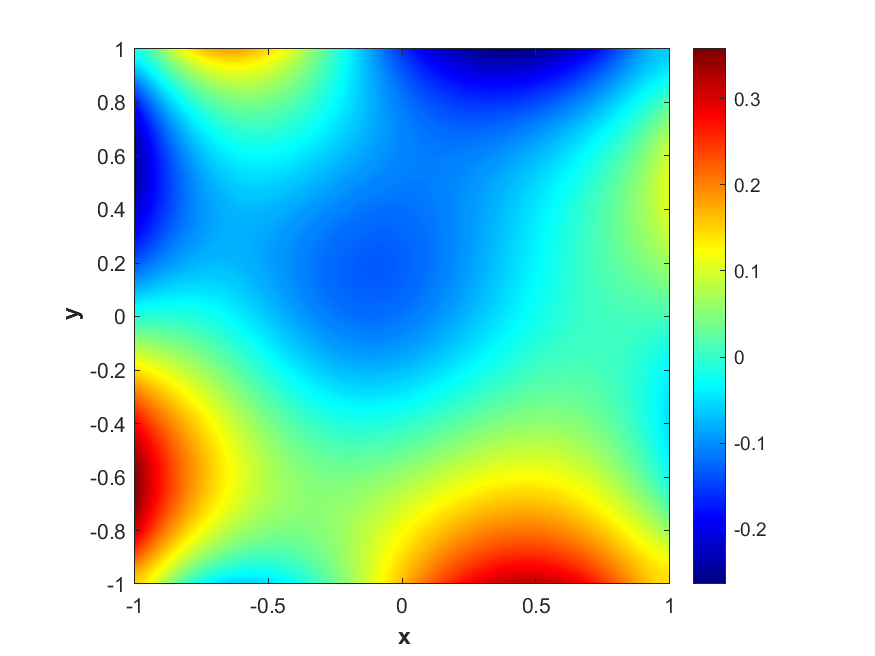} 
    \label{fig:sub-p-string}
  }
  \subfigure[Angular velocity]{
    \includegraphics[width=0.22\textwidth]{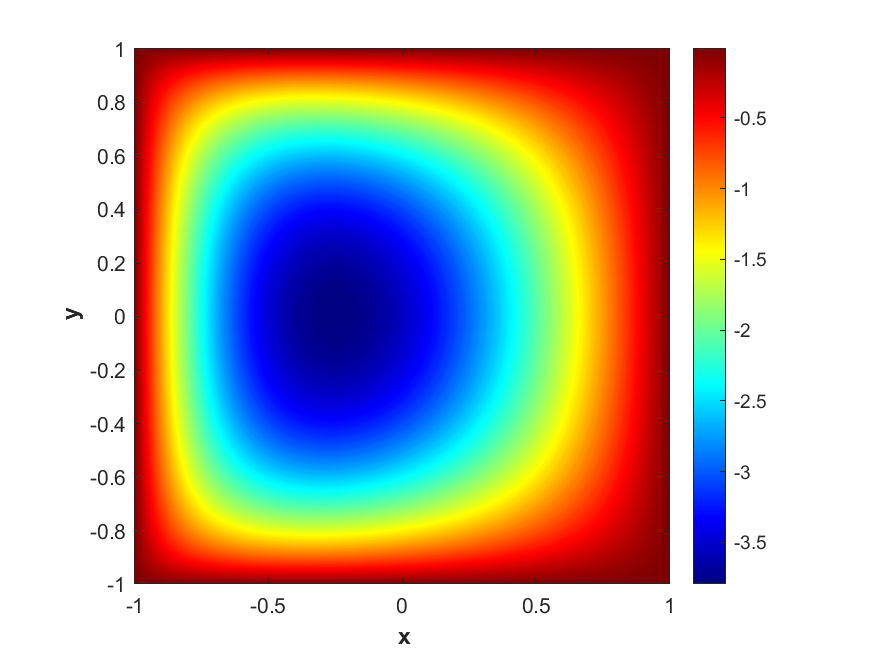}
    \label{fig:sub-w-string}
  }
  \subfigure[Temperature]{
    \includegraphics[width=0.22\textwidth]{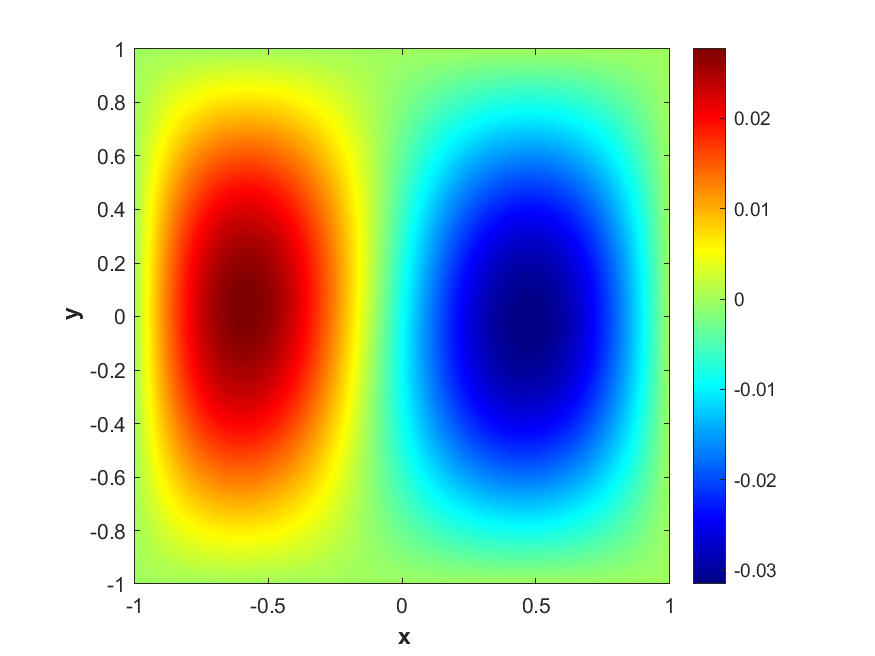} 
    \label{fig:sub-theta-string}
  }
  \caption{Distribution map of numerical solutions for velocity $\mathbf{u}$, angular velocity $w$, pressure $p$ and temperature $\theta$ at $t=50$.}
  \label{fig:total13}
\end{figure}

\begin{figure}[ht!]
  \centering
  \subfigure[$t=0$]{
    \includegraphics[width=0.175\textwidth]{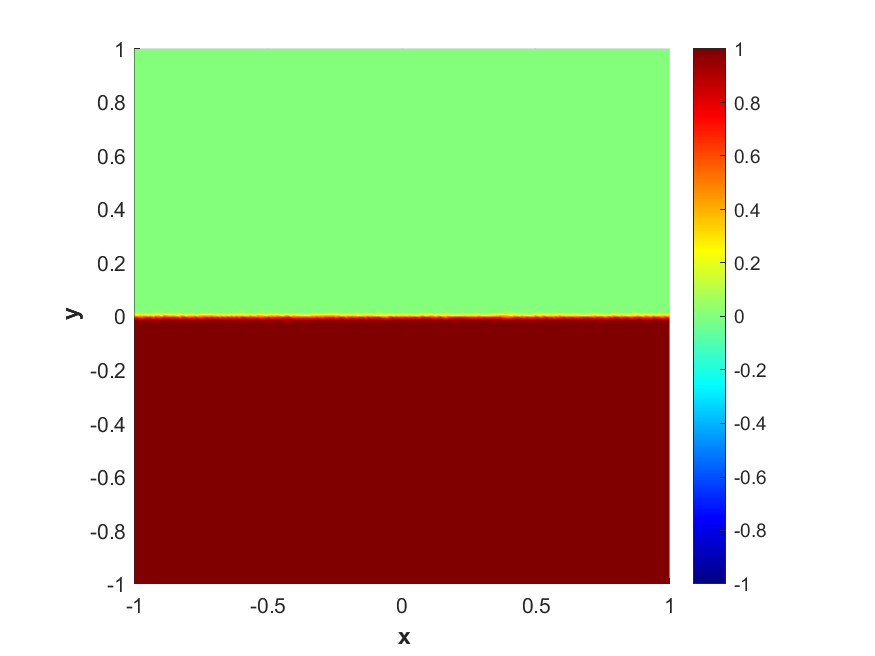}  
    \label{fig:sub-T=0}
  }%
  \subfigure[$t=1$]{
    \includegraphics[width=0.175\textwidth]{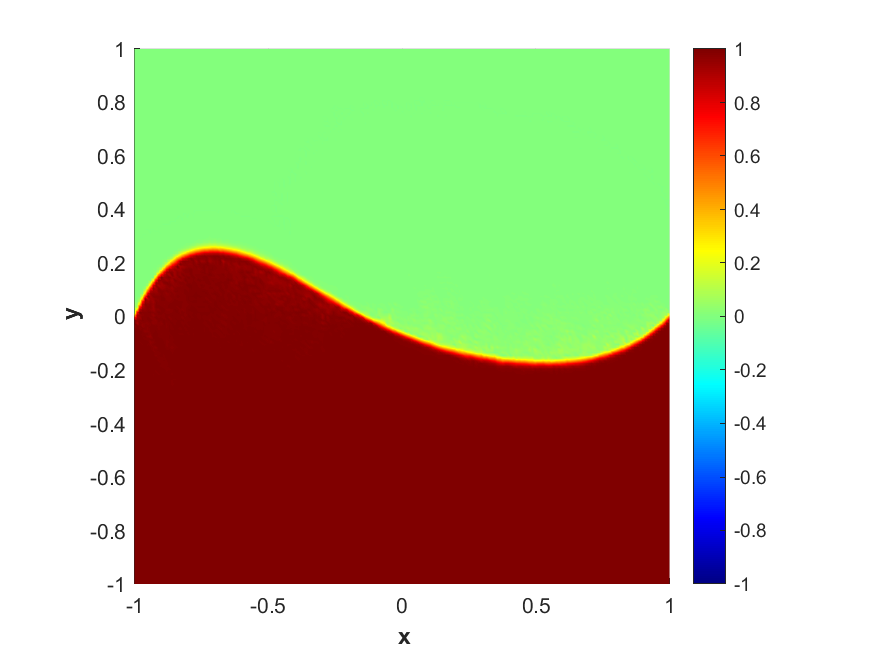} 
    \label{fig:sub-T=1}
  }%
  \subfigure[$t=2$]{
    \includegraphics[width=0.175\textwidth]{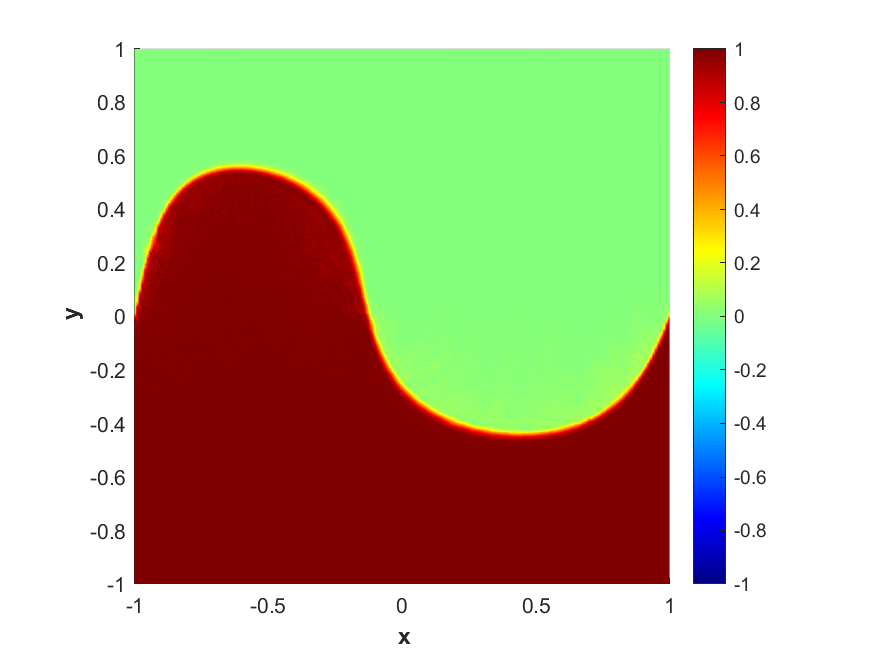}    
    \label{fig:sub-T=2}
  }%
  \subfigure[$t=3$]{
    \includegraphics[width=0.175\textwidth]{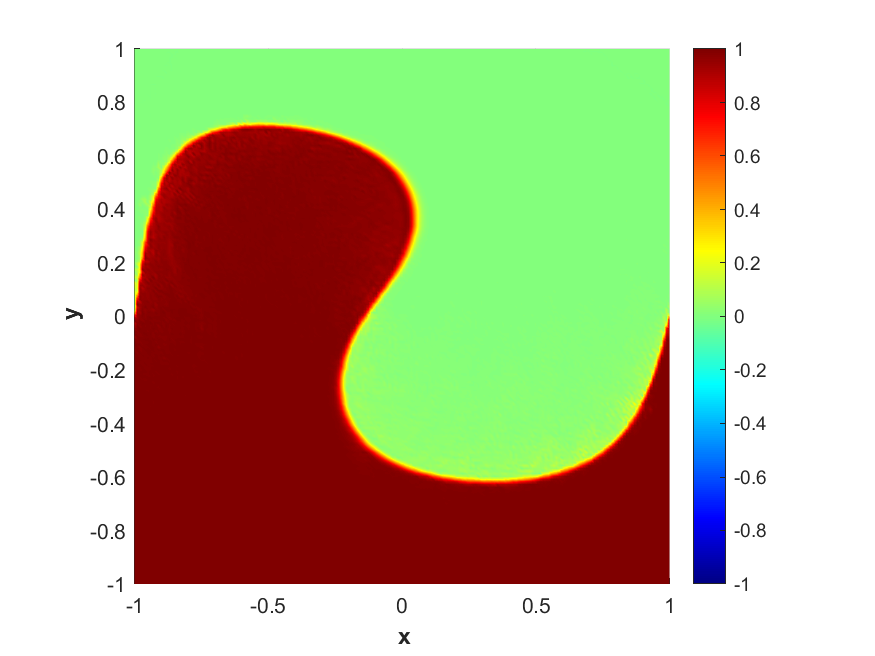}    
    \label{fig:sub-T=3}
  }
  \subfigure[$t=4$]{
    \includegraphics[width=0.175\textwidth]{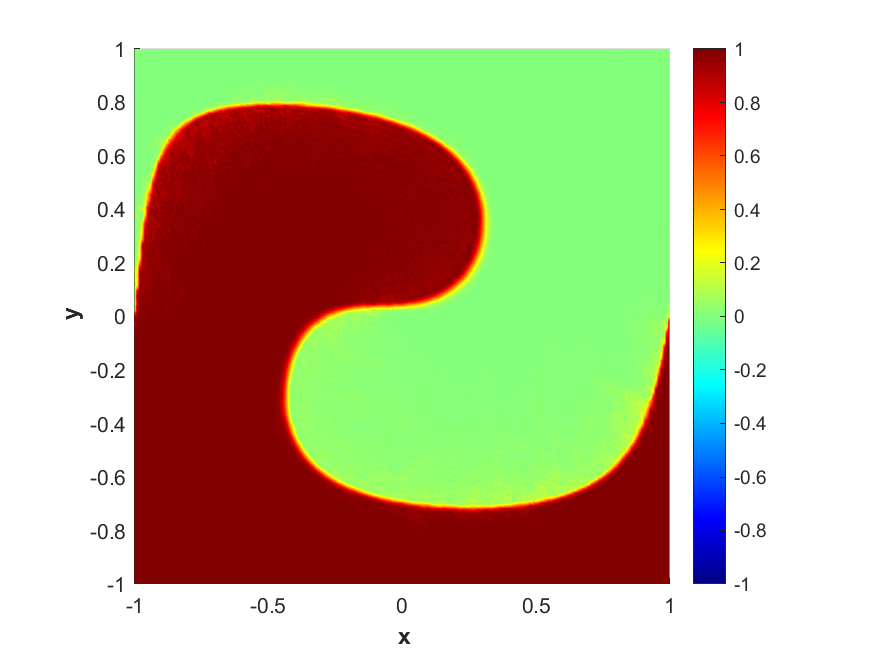}  
    \label{fig:sub-T=4}
  }\\%
  \subfigure[$t=5$]{
    \includegraphics[width=0.175\textwidth]{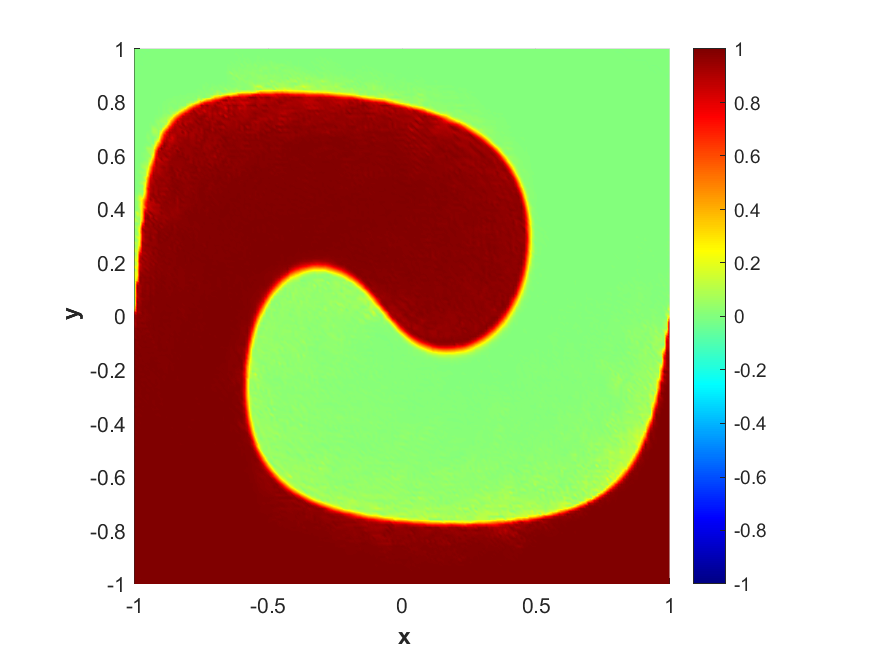}  
    \label{fig:sub-T=5}
  }%
    \label{fig:sub-T=6}
  \subfigure[$t=10$]{
    \includegraphics[width=0.175\textwidth]{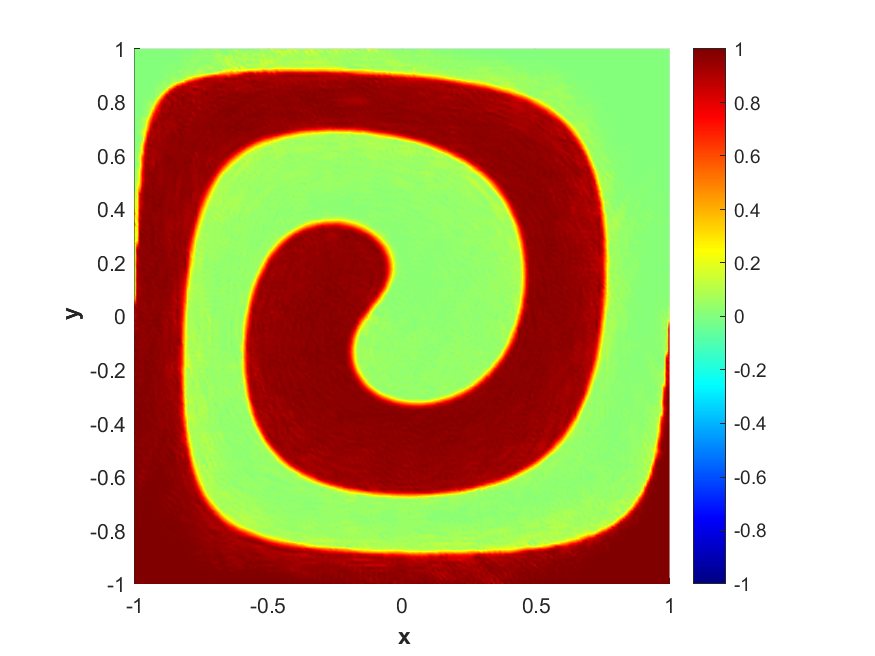}
    \label{fig:sub-T=10}
  }
  \subfigure[$t=15$]{
    \includegraphics[width=0.175\textwidth]{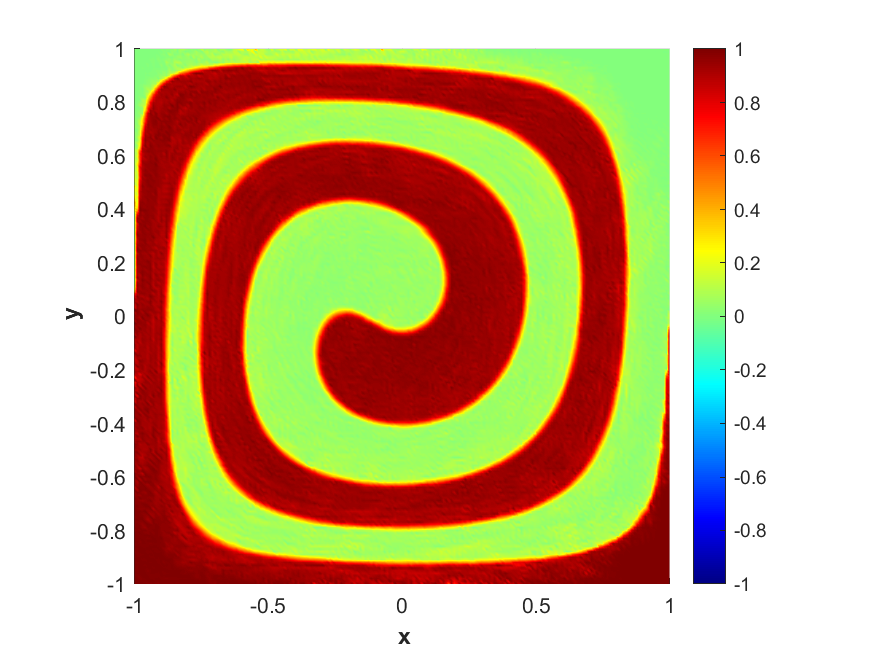}
    \label{fig:sub-T=15}
  }%
  \subfigure[$t=20$]{
    \includegraphics[width=0.175\textwidth]{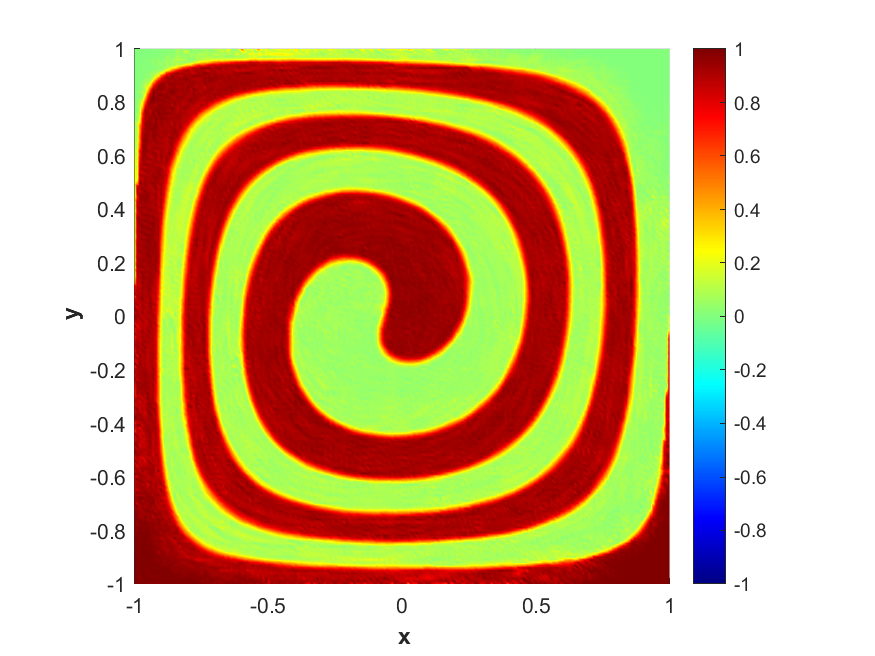}  
    \label{fig:sub-T=20}
  }%
  \subfigure[$t=25$]{
    \includegraphics[width=0.175\textwidth]{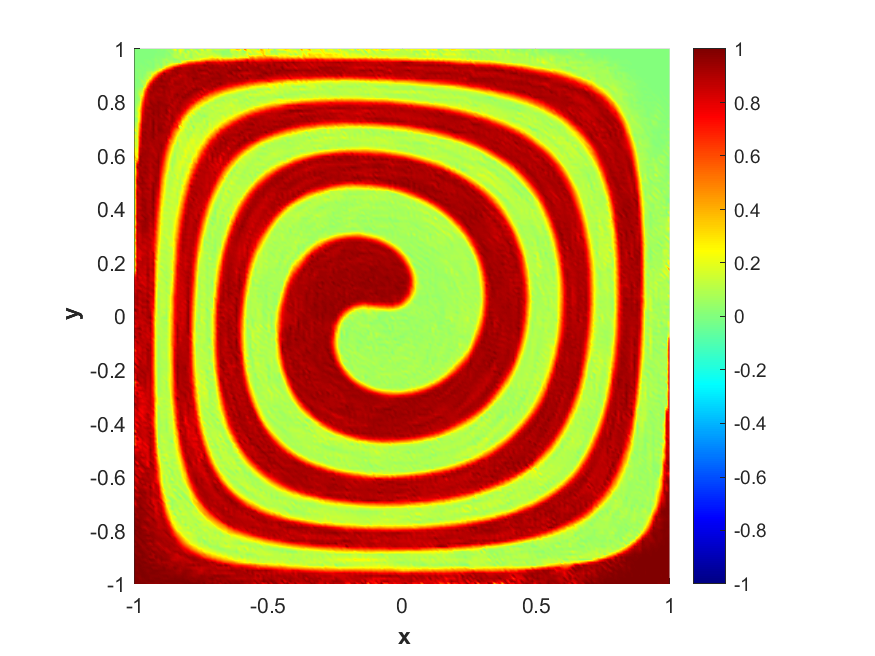}  
    \label{fig:sub-T=25}
  }\\%
  \subfigure[$t=30$]{
    \includegraphics[width=0.175\textwidth]{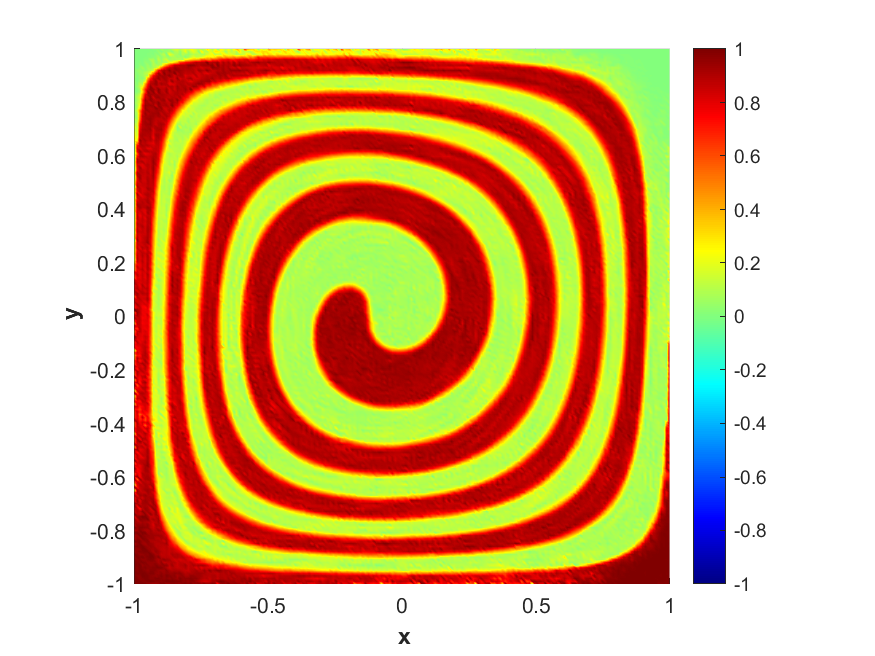}    
    \label{fig:sub-T=30}
  }
  \subfigure[$t=35$]{
    \includegraphics[width=0.175\textwidth]{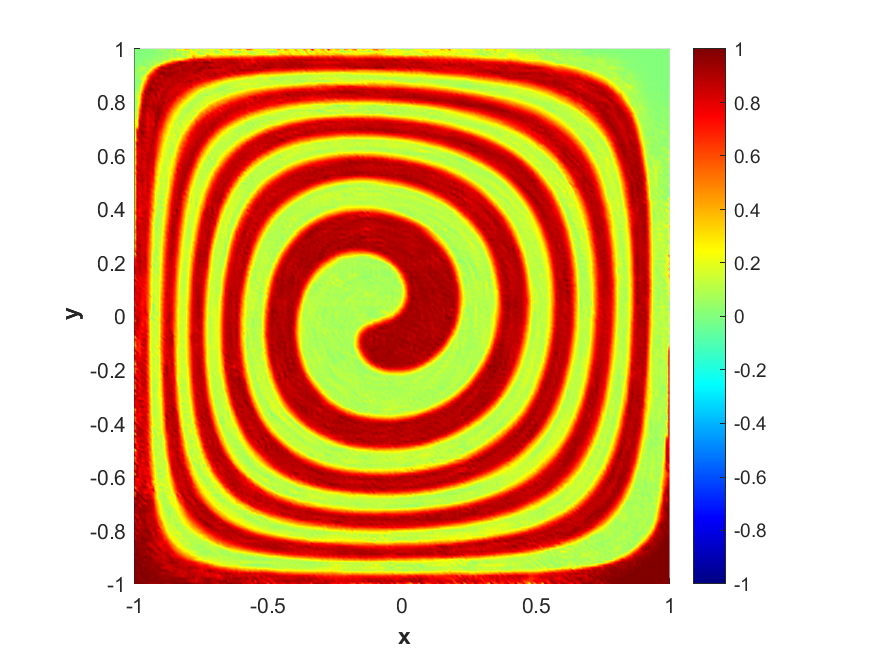}    
    \label{fig:sub-T=35}
  }%
  \subfigure[$t=40$]{
    \includegraphics[width=0.175\textwidth]{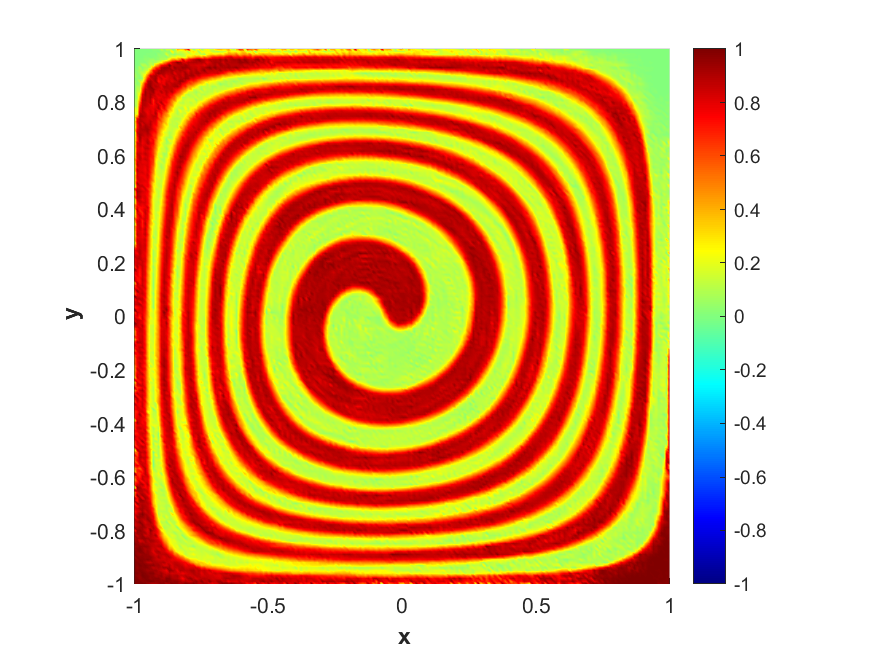}  
    \label{fig:sub-T=40}
  }%
  \subfigure[$t=45$]{
    \includegraphics[width=0.175\textwidth]{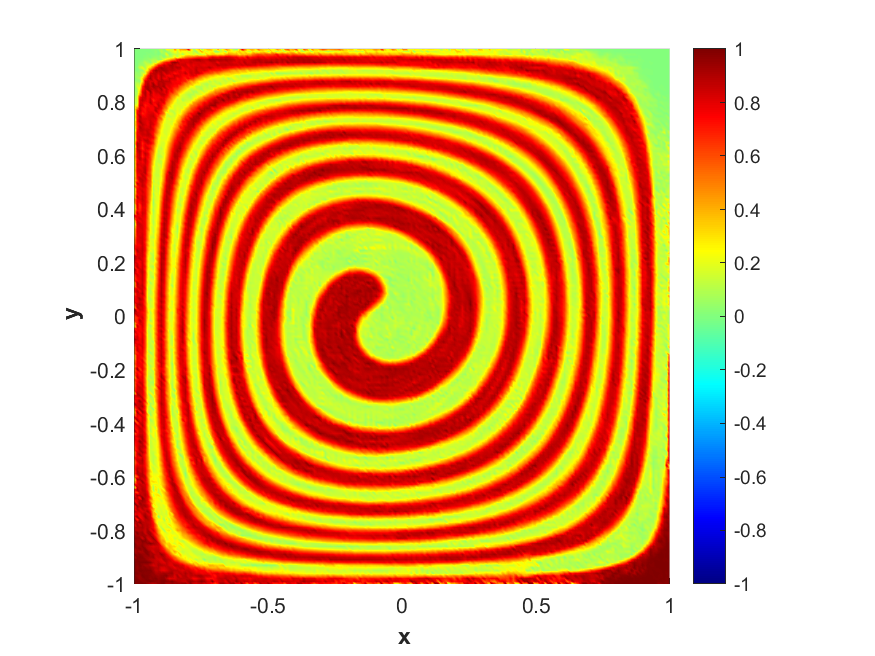}    
    \label{fig:sub-T=45}
  }%
  \subfigure[$t=50$]{
    \includegraphics[width=0.175\textwidth]{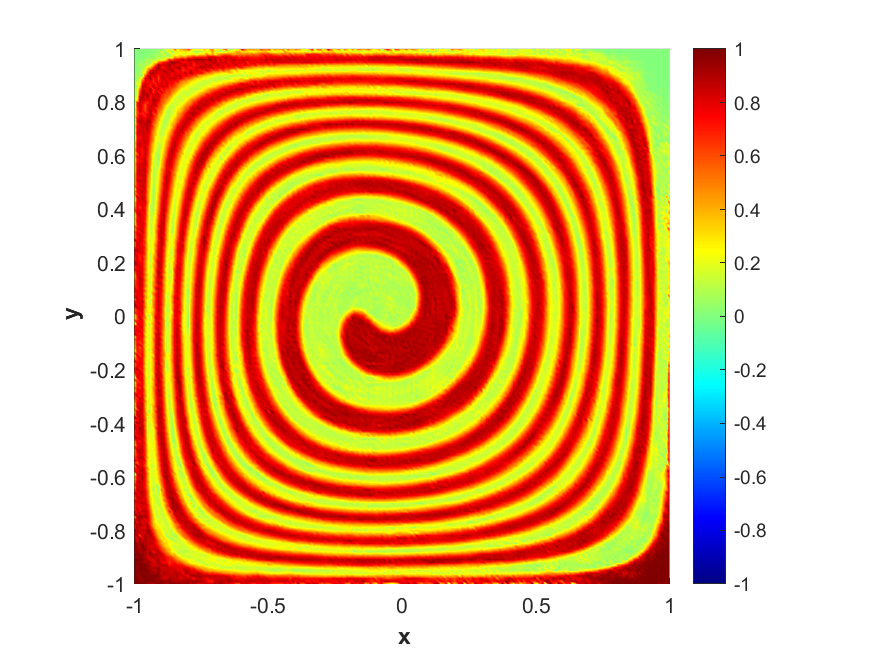}  
    \label{fig:sub-T=50}
  }
  \caption{
    Mixing evolution of a convected passive scalar \(\phi\) over time.
    Each subfigure shows the scalar field at a specific time \(t\).
    For better visual comparison, the color axis is fixed to [-1,1], although the actual scalar field\ $\phi$\ is constrained in [0, 1]}.
  \label{fig:total14}
\end{figure}

\subsection{Driven cavity flows in 2D}

 In this subsection, the stability of the second-order pressure projection scheme is validated through a lid-driven cavity flow in 2D with a simple heating distribution. The computational domain is defined as the unit square $\Omega = (0,1)^{2}$.  The initial conditions are given by $\omega_{0} = 0, \theta_0 = 0$ and $\mathbf{u}_0 = (0,0).$ The boundary conditions are given as follows: the top lid applies a horizontal velocity $\mathbf{u}_1=(1, 0)$, while the other boundaries adopt a no-slip boundary condition $\mathbf{u}=(0, 0)$.
 The angular velocity satisfies $\omega=0$; for the temperature, $\theta = 1.0$\ is prescribed on the right boundary, $\theta = 0$\ on the left boundary, and $\frac{\partial\theta}{\partial n}
= 0$\ on the upper and lower boundaries.  The spatial step is chosen as $h = 1/80$\ and the time step is $\delta_t = 10^{-3}$. The finite element spaces is chosen as $P2-P1$. In order to verify the influence of the top and thermal drives in the system, we set the following parameters. Since the buoyancy drive term $e\theta$ in the system, we use a normalization process. Using the system parameters, the Rayleigh number has the following expression $Ra=\frac{1}{(\chi+\mu)\kappa}$. In \Cref{fig:total5}–\Cref{fig:total8}, the physical parameters are set as $\upsilon =1.0, \kappa =0.01$, and $\chi=\mu=0.1, 0.01, 0.001$, and $0.0001$ respectively.
As shown in \Cref{fig:total5}, when \( Ra = 50 \) and \( Ra = 500 \), the thermal driving force is weak, and the flow is primarily dominated by the motion of the top lid. In contrast, for \( Ra = 5000 \) and \( Ra = 50000 \), the thermal effect competes with the lid-driven motion, leading to the emergence of a mixed flow pattern. This phenomenon is also clearly observed in the contour plots of pressure, angular velocity, and temperature. These results further confirm the effectiveness of the proposed second-order pressure projection scheme, as the velocity, angular velocity, pressure, and temperature fields all converge toward a steady state.

To further investigate the influence of microspin on the fluid behavior, we designed a series of numerical experiments with different sets of physical parameters: $\kappa =\chi = \mu = 0.1$, and $\nu =1.0, 0.01, 0.001$, and $0.0001$, respectively. From Figure~\ref{fig:total12}, it can be observed that for $\upsilon = 1.0$, the microspin effect strongly suppresses local variations in angular velocity, leading to a smooth angular velocity field with a very small amplitude, on the order of $10^{-3}$. As $\upsilon$ decreases, the response of the angular velocity becomes more pronounced: local peaks and shear-layer structures emerge, particularly near the wall, and the maximum amplitude of angular velocity increases with a sharper spatial distribution. This trend indicates that weaker microspin diffusion enhances the degrees of freedom of the angular motion and intensifies microrotation activity within the system, which is consistent with the fundamental predictions of micropolar fluid theory.

\begin{figure}[ht!]\label{Ta124}
  \centering
  \subfigure[$Ra = 50$]{
    \includegraphics[width=0.45\textwidth]{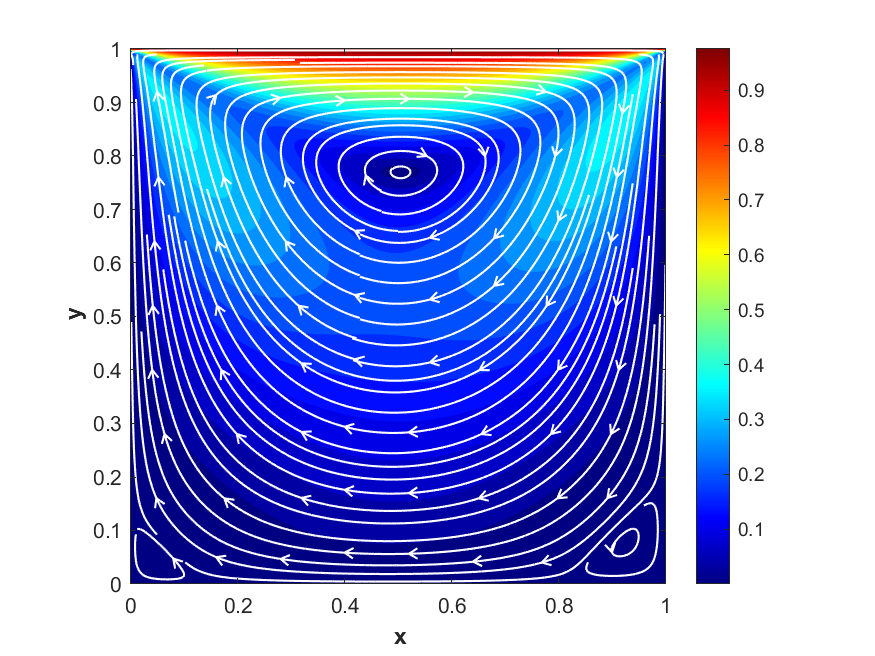}
    \label{fig:sub-u-Ra=50}
  }
  \subfigure[$Ra = 500$]{
    \includegraphics[width=0.45\textwidth]{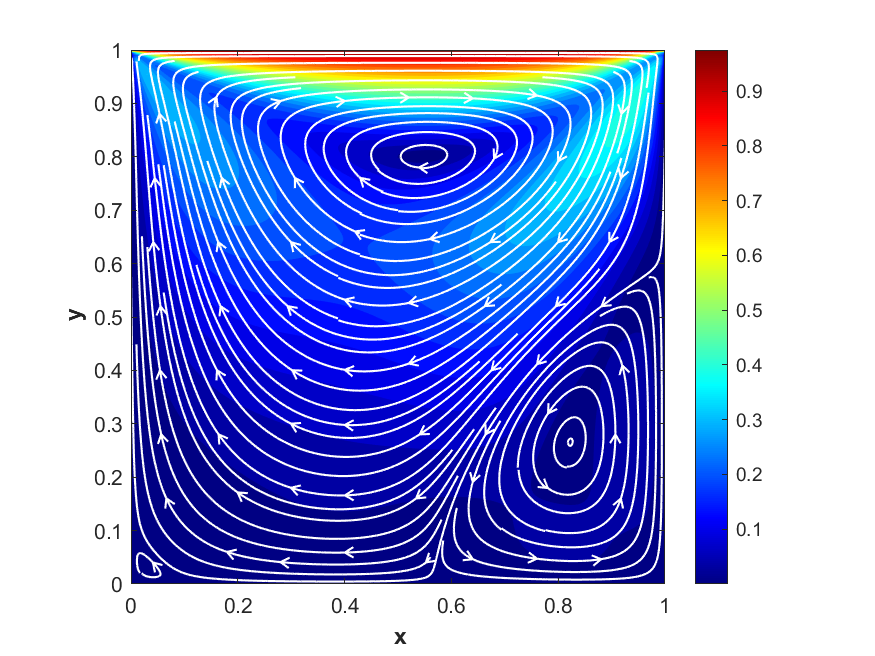}
    \label{fig:sub-u-Ra=500}
  }\\
  \subfigure[$Ra = 5000$]{
    \includegraphics[width=0.45\textwidth]{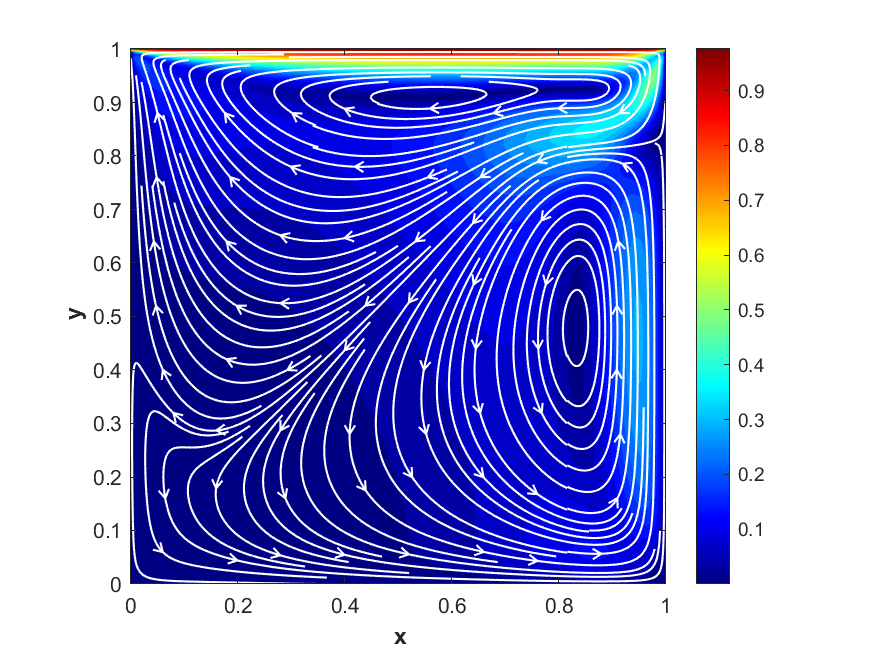}
    \label{fig:sub-u-Ra=5000}
  }
  \subfigure[$Ra = 50000$]{
    \includegraphics[width=0.45\textwidth]{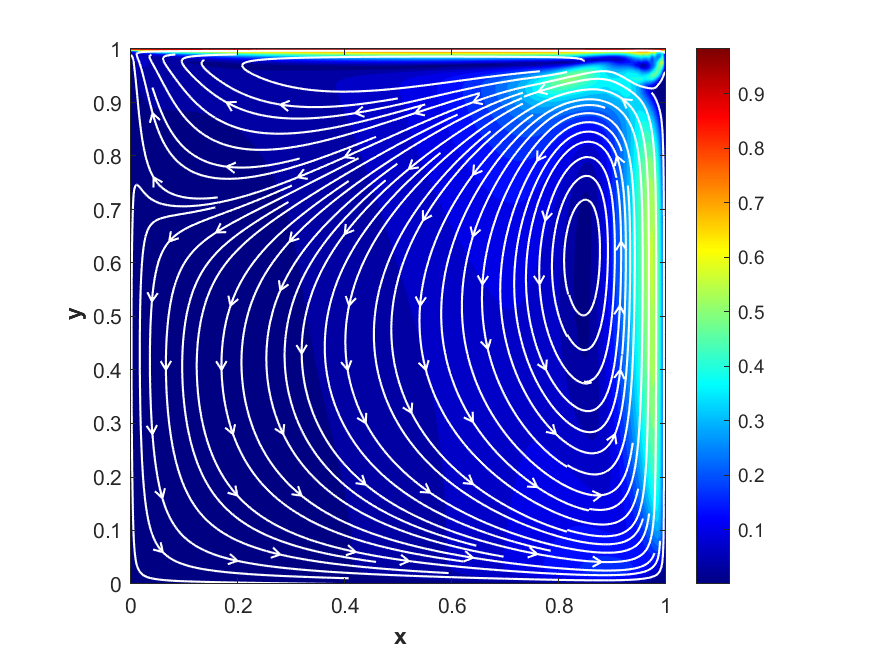}
    \label{fig:sub-u-Ra=50000}
  }
  \caption{Visualization of the numerical solution distribution for  streamlines.}
  \label{fig:total5}
\end{figure}

\begin{figure}[ht!]\label{Ta125}
  \centering
  \subfigure[$Ra = 50$]{
  \includegraphics[width=0.33\textwidth]{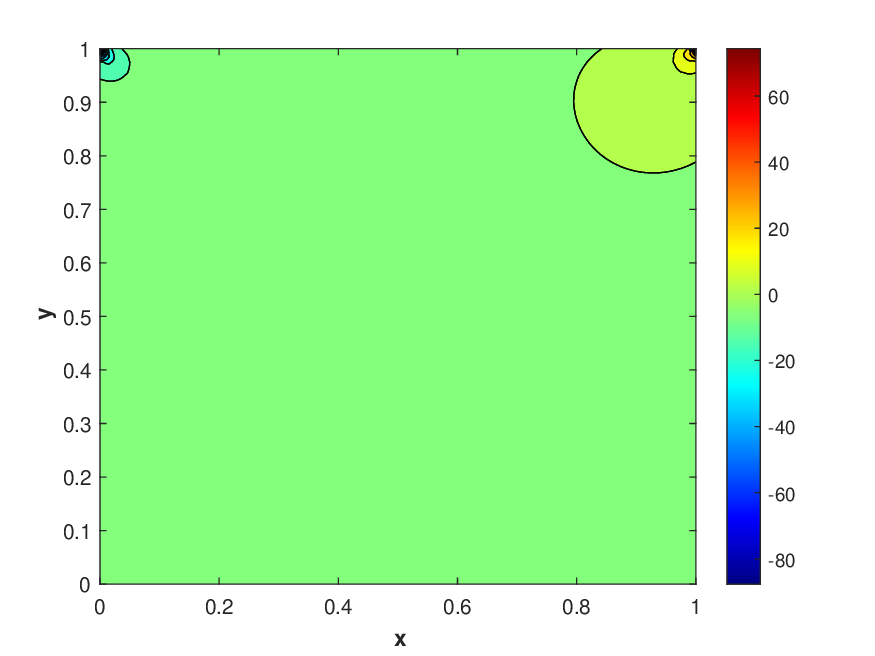}
    \label{fig:sub-p-Ra=50}
  }
  \subfigure[$Ra = 500$]{
    \includegraphics[width=0.33\textwidth]{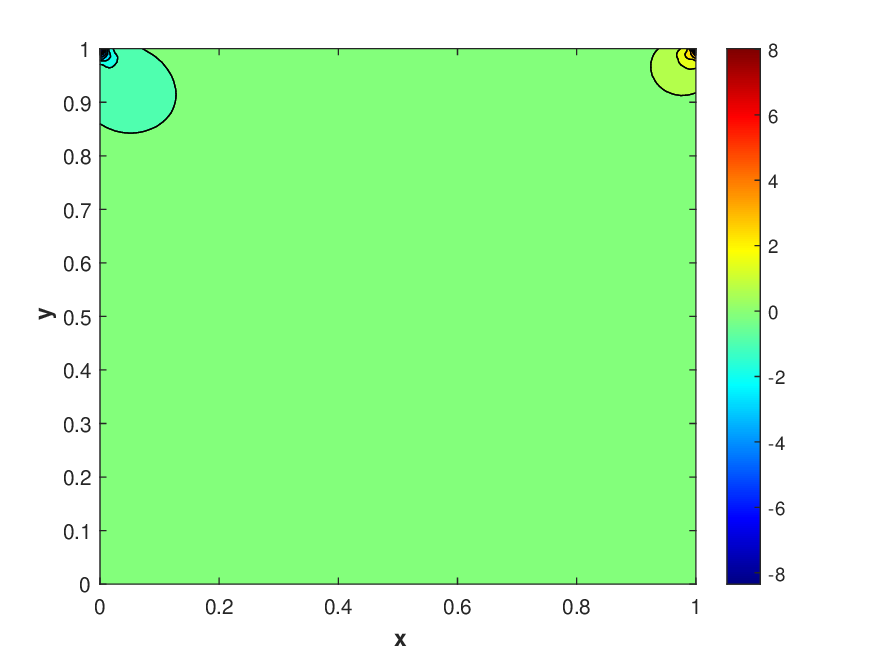}
    \label{fig:sub-p-Ra=500}
  }\\
  \subfigure[$Ra = 5000$]{
    \includegraphics[width=0.33\textwidth]{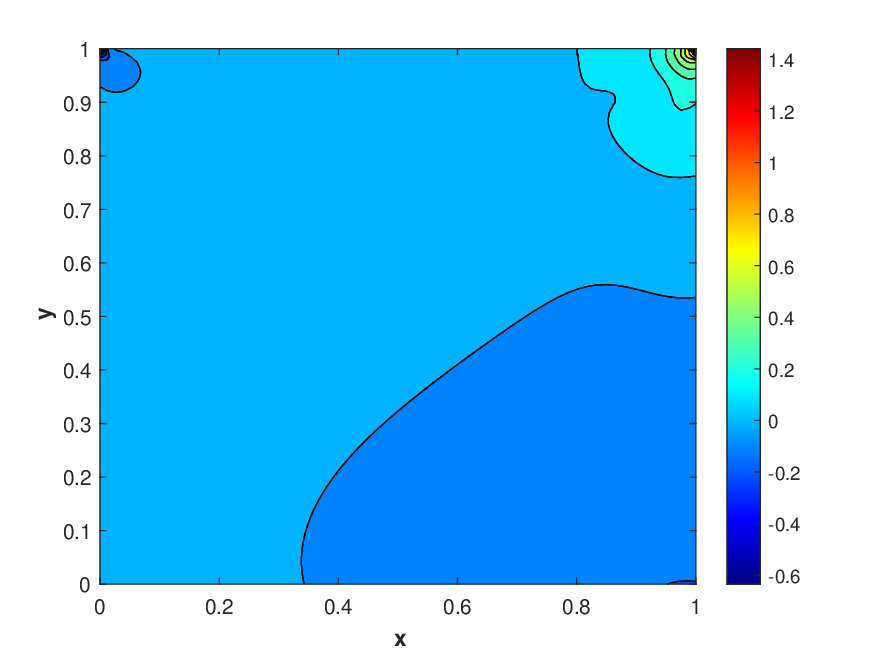}   
   \label{fig:sub-p-Ra=5000}
  }
  \subfigure[$Ra = 50000$]{
    \includegraphics[width=0.33\textwidth]{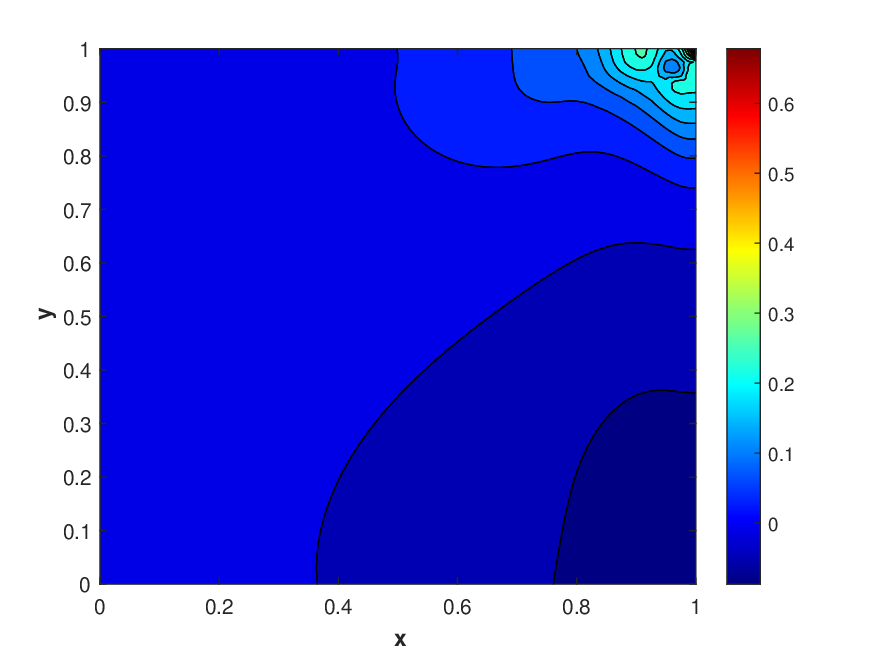}    
   \label{fig:sub-p-Ra=50000}
  }
  \caption{Visualization of the numerical solution distribution for  pressure.}
 \label{fig:total6}
\end{figure}

\begin{figure}[ht!]\label{Ta126}
  \centering
  \subfigure[$Ra = 50$]{
    \includegraphics[width=0.33\textwidth]{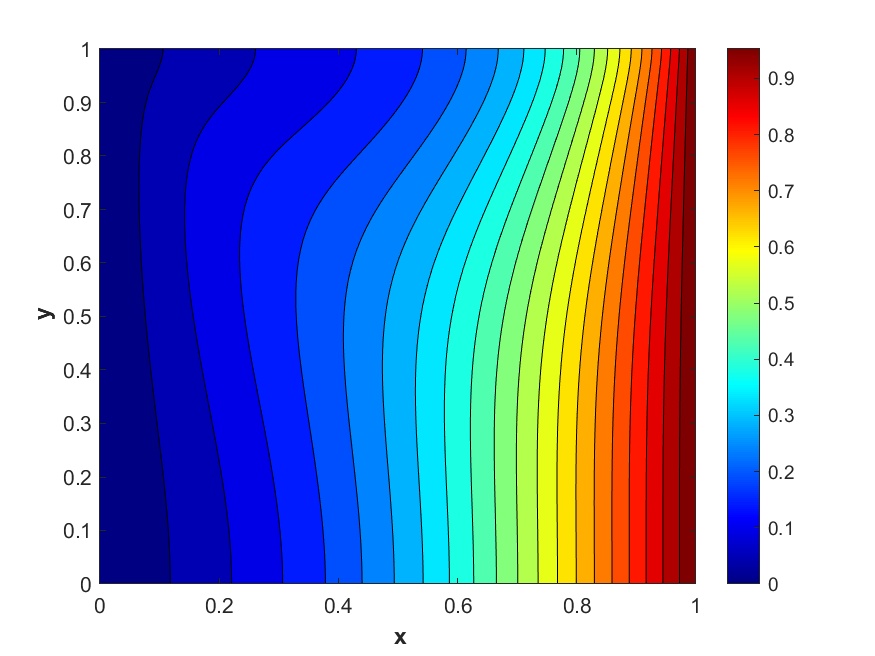}   
    \label{fig:sub-theta-Ra=50}
  }
  \subfigure[$Ra = 500$]{
   \includegraphics[width=0.33\textwidth]{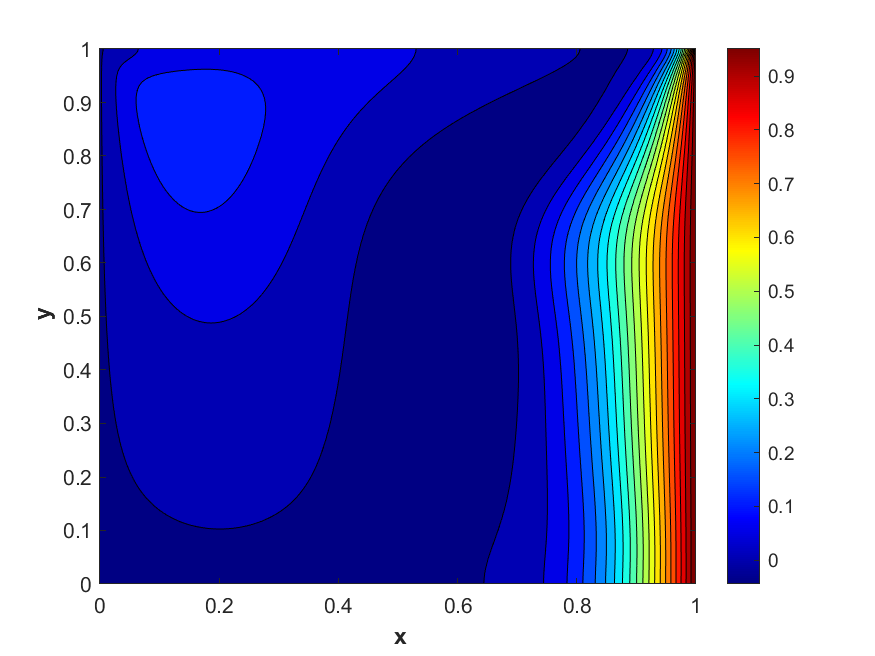}   
    \label{fig:sub-theta-Ra=500}
  }\\
  \subfigure[$Ra = 5000$]{
    \includegraphics[width=0.33\textwidth]{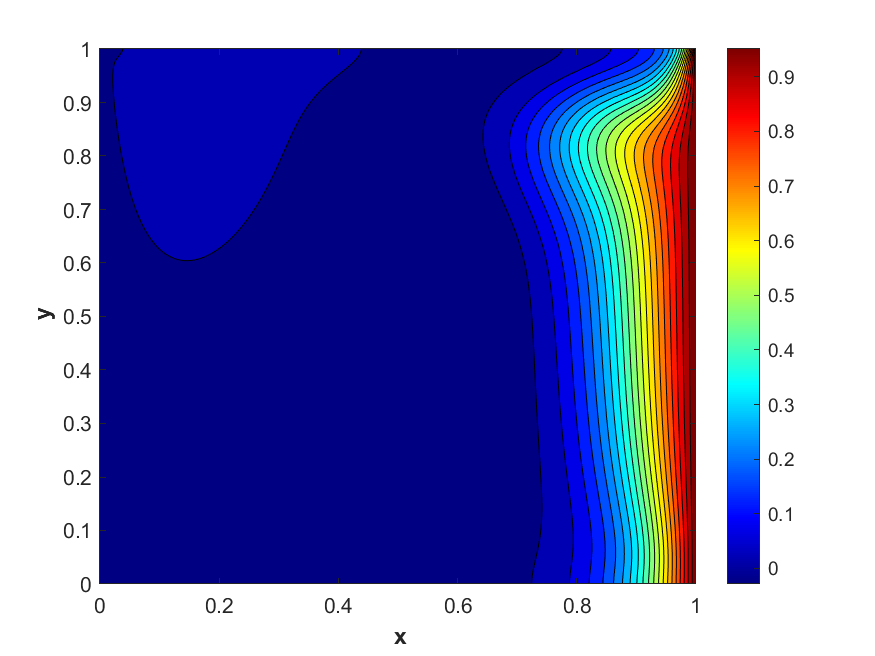}    
    \label{fig:sub-theta-Ra=5000}
  }
  \subfigure[$Ra = 50000$]{
    \includegraphics[width=0.33\textwidth]{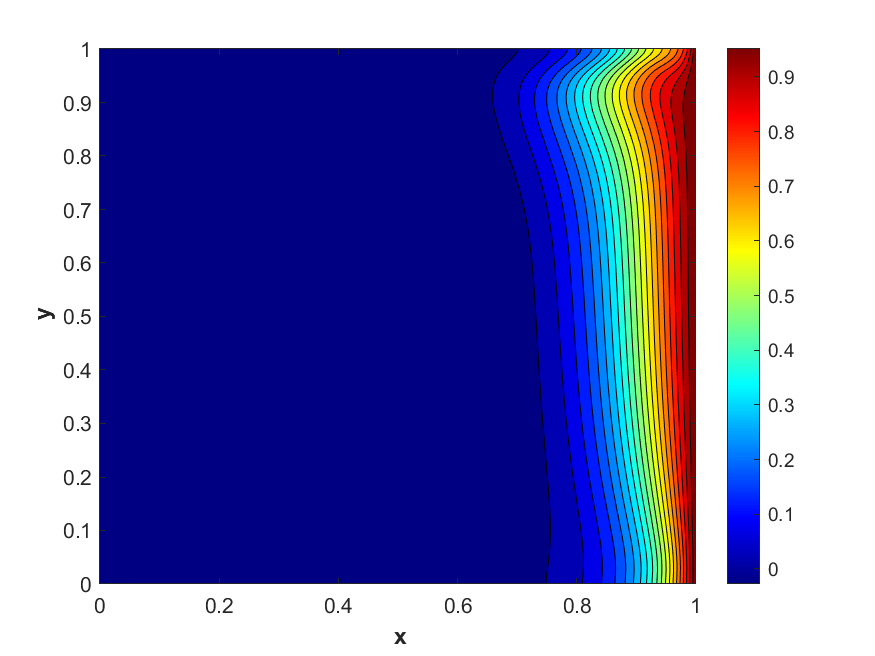}   
    \label{fig:sub-theta-Ra=50000}
  }
  \caption{Visualization of the numerical solution distribution for  temperature.}
  \label{fig:total7}
\end{figure}

\begin{figure}[!htbp]\label{Ta127}
  \centering
  \subfigure[$Ra = 50$]{
    \includegraphics[width=0.33\textwidth]{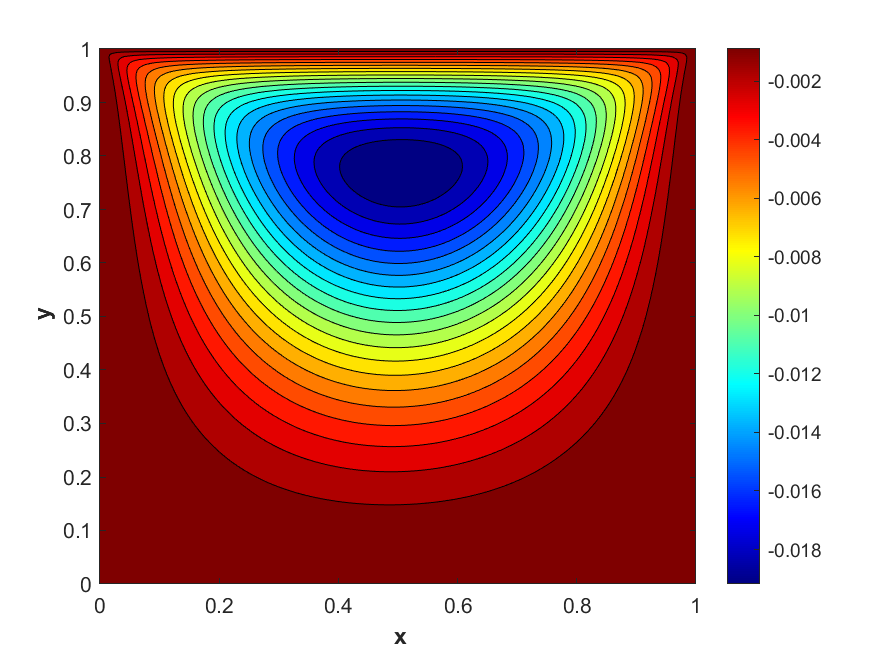}  
    \label{fig:sub-w-Ra=50}
  }
  \subfigure[$Ra = 500$]{
    \includegraphics[width=0.33\textwidth]{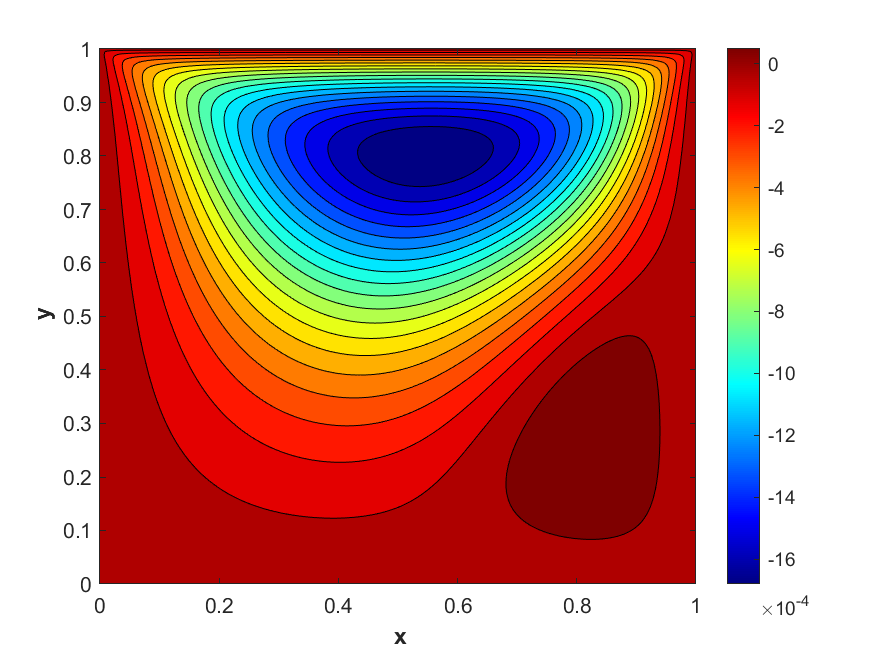}   
    \label{fig:sub-w-Ra=500}
  }\\
  \subfigure[$Ra = 5000$]{
    \includegraphics[width=0.33\textwidth]{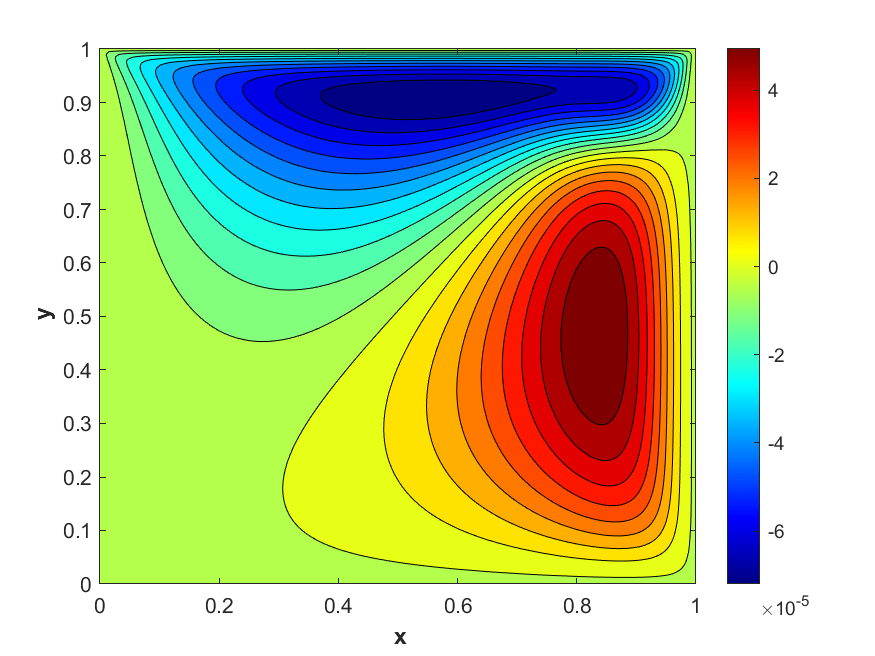}    
    \label{fig:sub-w-Ra=5000}
  }
  \subfigure[$Ra = 50000$]{
    \includegraphics[width=0.33\textwidth]{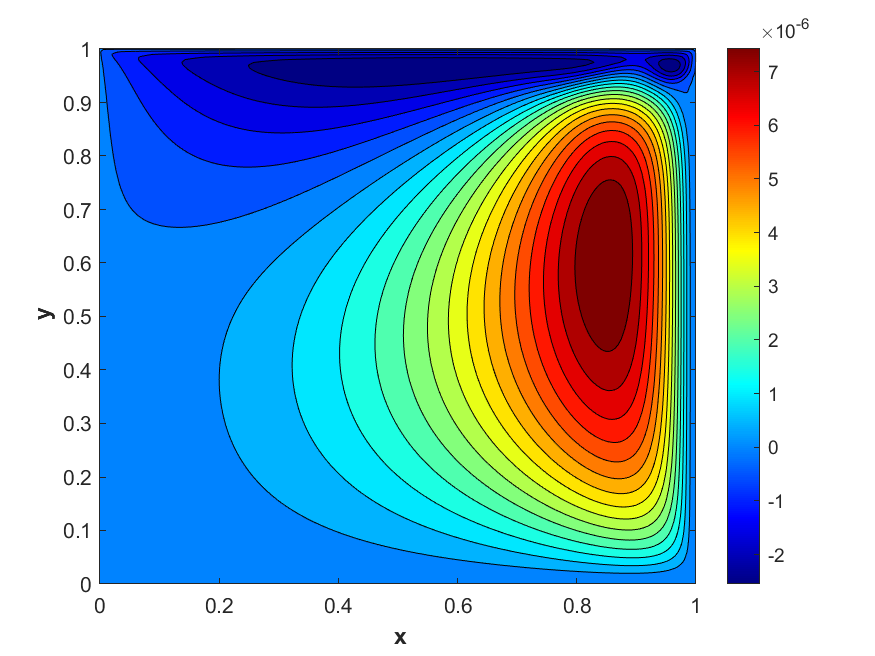}   
    \label{fig:sub-w-Ra=50000}
  }
  \caption{Visualization of the numerical solution distribution for angular velocity.}
  \label{fig:total8}
\end{figure}

\begin{figure}[!htbp]
  \centering
  \subfigure[$\upsilon=1$]{
    \includegraphics[width=0.33\textwidth]{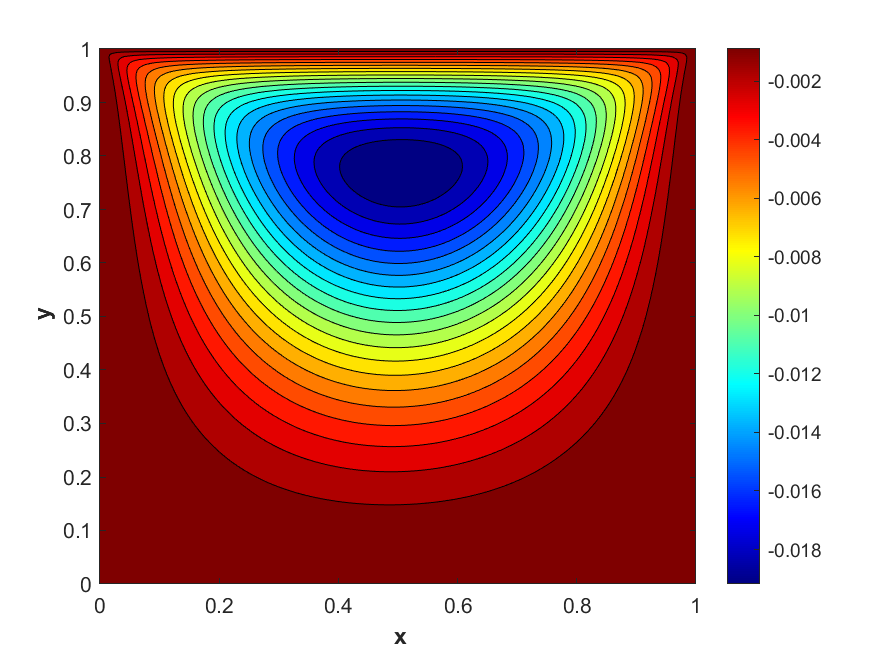}
    
    \label{fig:sub-w-nu=1}
  }
  \subfigure[$\upsilon=0.01$]{
    \includegraphics[width=0.33\textwidth]{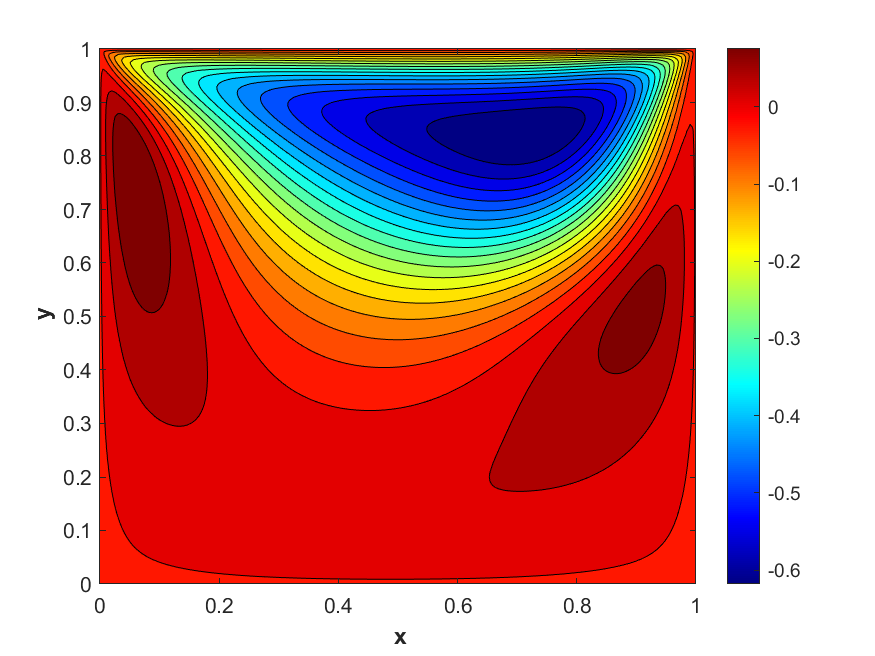}
    \label{fig:sub-w-nu=0.011}
  }\\
  \subfigure[$\upsilon=0.001$]{
    \includegraphics[width=0.33\textwidth]{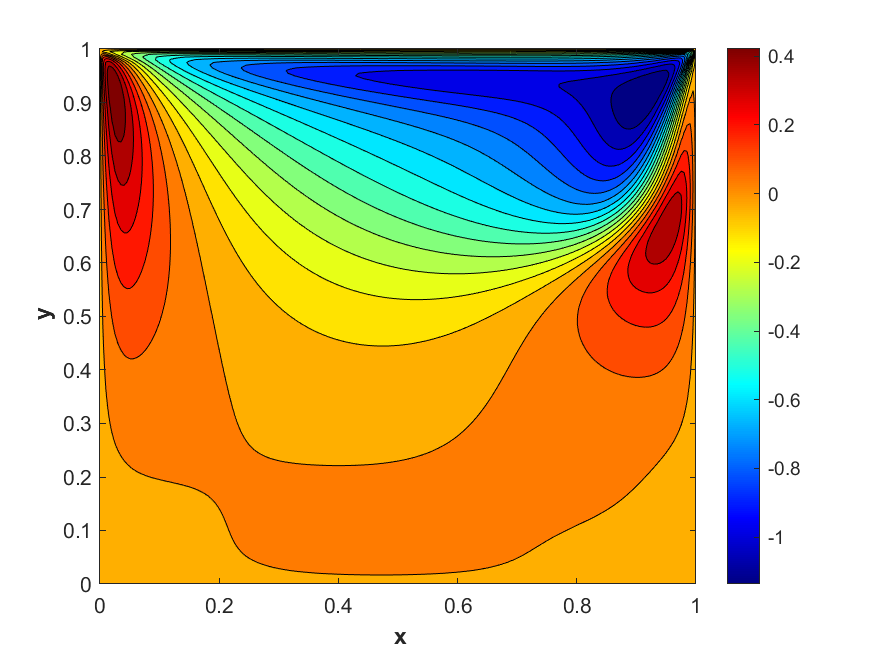}  
    \label{fig:sub-w-nu=0.001}
  }
  \subfigure[$\upsilon=0.0001$]{
    \includegraphics[width=0.33\textwidth]{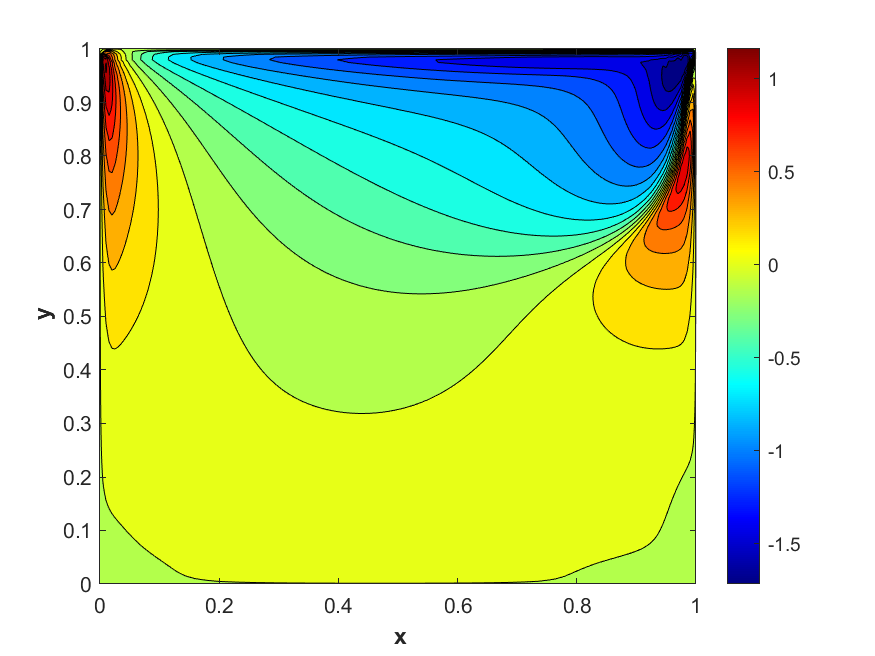}
    \label{fig:sub-w-nu=0.0001}
  }
  \caption{Visualization of the numerical solution distribution for angular velocity.}
  \label{fig:total12}
\end{figure}
\clearpage

\section{Conclusions}

The micropolar Rayleigh-B{\'e}nard convection system is a strongly coupled system, resulting in expensive computations for traditional coupled algorithms. With spatial discretization of the finite element method, a numerical scheme possessing the favorable properties of linearity, full decoupling, and second-order time accuracy is constructed. Based on a second-order projection technique for the hydrodynamic component, the scheme exhibits significant computing efficiency with only a few independent linear elliptic subproblems with constant coefficients at each time step. This work not only conducts a rigorous stability analysis, but also employs the inverse Stokes operator and negative-norm estimates to effectively address the convergence-order reduction caused by the pressure term, thereby achieving optimal error estimates. The effectiveness and robustness of the scheme are presented with numerical experiments, including precision tests, temperature-driven cavity flow simulations, and the stirring of a passive scalar experiment. In future research, the model will be extended to the magneto-micropolar fluids, where the Maxwell equations are coupled to describe the influence of external magnetic and electric fields on micro-particle rotation. This coupling is expected to enable active control of microrotation dynamics through electromagnetic fields.

\section*{Acknowledgments}
Ming Cui's work was supported by National Natural Science Foundation of China No. 12371368. Xiaoyu Dong's work was supported by the Postdoctoral Fellowship Program of CPSF under Grant No. GZC20251999 and the Beijing Postdoctoral Research Foundation under Grant No. 2025-ZZ-34.

\bibliographystyle{unsrtnat}
\bibliography{MRB2}

\end{document}